\newcommand{\R}{\mathbb{R}}
\newcommand{\Z}{\mathbb{Z}}
\newcommand{\T}{\mathbb{T}}
\newcommand{\HH}{\mathbb{H}}
\newcommand{\LL}{\mathbb{L}}
\newcommand{\Lp}{\mathbb{L}^p}
\newtheorem{theorem}{Theorem}
\newtheorem{lemma}{Lemma}
\newtheorem{proposition}{Proposition}
\newtheorem{corollary}{Corollary}
\newtheorem{question}{Question}
\theoremstyle{definition}
\newtheorem{definition}{Definition}
\theoremstyle{remark}
\newtheorem{remark}{Remark}
\newtheorem{hypothesis}{Hypothesis}
\begin{document}

\title{On the quantitative quasi-isometry problem: transport of Poincar\'e inequalities and different types of quasi-isometric distortion growth}
\author{Vladimir Shchur}
\address{Universit\'e Paris-Sud, F-91405 Orsay Cedex, France}
\address{Current address: Wellcome Trust Sanger Institute, CB10 1SA Hinxton, Cambridgeshire, UK}
\address{vlshchur@gmail.com}
\address{vs3@sanger.co.uk}
%\date{}                                           % Activate to display a given date or no date

\begin{abstract}
We consider a quantitative form of the quasi-isometry problem. We discuss several arguments which lead us to different results and bounds of quasi-isometric distortion: comparison of volumes, connectivity etc. Then we study the transport of Poincar\'e constants by quasi-isometries and we give sharp lower and upper bounds for the homotopy distortion growth for an interesting class of hyperbolic metric spaces.
\end{abstract}

\keywords{Quasi-isometries, hyperbolic spaces, Poincar\'e constants, quantitative quasi-isometry problem}
\subjclass[2010]{51F99}

\maketitle

%\tableofcontents

%\end{french}

%\begin{keywords}
%article \sep class Þles \sep preamble
%\end{keywords}

%\keywords{Hyperbolic space, quasi-isometrie, quasi-geodesic, Morse Lemma, Poincar\'e inequality, Poincar\'e constant, quasi-isometric distortion growth}
%\subjclass[2010]{}

%\tableofcontents
\markright{On the quantitative quasi-isometry problem}

\section{Introduction}

In this article we shall study a quantitative form of the quasi-isometry problem: we will give lower and upper bounds for quasi-isometry constants $\lambda$ and $c$ for different classes of spaces. Along the way, we will give a method to transport Poincar\'e inequalities by quasi-isometries that leads to sharp bounds for certain spaces.

The quantitative quasi-isometry problem consists in evaluating how close two metric spaces can be at various scales, see \cite{shchur-t}. Specifically, let $E, F$ be two metric spaces. Consider a ball of radius $R$ in the first space $E$ and take a $(\lambda,c)$-quasi-isometric embedding of this ball in $F$. We are interested in the behaviour of the infimum of the sum $\lambda+c$ of quasi-isometry constants as a function of $R$.

\subsection{First examples}

Since one may always take $\lambda=1$ and $c=R$, the deviation between any two spaces is at most linear.

Volume considerations show that any space with polynomial volume growth deviates linearly from any space of exponential volume growth (see Proposition \ref{ExpPoly}).

Connectedness considerations provide a lower bound of $\sqrt{R}$ for the embeddings of Euclidean or hyperbolic balls to trees. In the hyperbolic case, this is sharp (see Proposition \ref{BigTree}).

This suggests that, in the family of Gromov hyperbolic metric spaces, deviations should be of the order $\sqrt{R}$. Indeed, we show (see Proposition \ref{denseSet}) that given two thick enough hyperbolic metric spaces, one can map a $\sqrt{R}$-dense subset of an $R$-ball of the first space into the second one with $\sqrt{R}$ distortion. However, we have been unable to extend such embeddings to the full $R$-ball. 

There seems to be a rather subtle obstruction to doing this. For instance, we show in Proposition \ref{linearTree} that mapping a tree into hyperbolic space requires linear distortion. This is based on the notion of separation, cf. \cite{BS}, \cite{BST}.

\subsection{Main result}

Our main result is another step towards capturing such obstructions. We shall consider a class of negatively curved locally homogeneous Riemannian manifolds which are not simply connected, but nevertheless hyperbolic. We prove a sharp linear lower bound on the distortion of embeddings which are homotopy equivalences.

Let $\T^n$ denote the $n$-dimensional torus. Given positive numbers $\mu_1 \le\cdots\le\mu_n$, denote by $Z_{\mu}=\T^n\times\R$, where the product space is equipped with the Riemannian metric $dt^2+\sum_i e^{2\mu_it}dx_i^2$. The universal cover of $Z_{\mu}$ is a Riemannian homogeneous space. $Z_{\mu}$ is a hyperbolic metric space. Its ideal boundary is a product of circles, each of which has a metric which is a power of the usual metric. Thus $Z_{\mu}$ can be viewed as a hyperbolic cone over this fractal torus. Essentially our theorem states that the quasi-isometric distortion growth function between such spaces is linear if one requires maps to be isomorphic on fundamental groups.

\begin{theorem}
\label{lower}
(Rough version. For a precise statement, see Theorem \ref{lowerBound}). Every $(\lambda,c)$-quasi-isometric embedding of an $R$-ball of $Z_{\mu}$ into $Z_{\mu'}$ which is a homotopy equivalence satisfies
$$\lambda+c \ge const\left(\frac{\sum\mu_i}{\mu_n} - \frac{\sum\mu_n'}{\mu_n'}\right)R.$$
\end{theorem}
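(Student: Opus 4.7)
The plan is to combine the rigidity coming from the homotopy equivalence at the level of $\pi_1$ with the explicit warped-product geometry of $Z_\mu$, and to extract the bound by comparing the volumes and diameters of natural cross-sections on the two sides. First I would isolate the key invariants: since $Z_\mu=\T^n\times\R$ deformation-retracts onto $\T^n\times\{0\}$, one has $\pi_1(Z_\mu)\cong\Z^n$, and the homotopy equivalence yields an isomorphism $A:\Z^n\to\Z^n$ of fundamental groups, forcing $n=n'$. The number $\sum_i\mu_i$ is the volume entropy of $Z_\mu$ (the horosphere $\T^n\times\{t\}$ has $n$-volume comparable to $e^{(\sum\mu_i)t}$), while $\mu_n$ is the largest warping rate and governs the diameter of the same horosphere (comparable to $e^{\mu_n t}$). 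Consequently $\sum\mu_i/\mu_n$ is the Hausdorff dimension of the ideal boundary $\partial_\infty Z_\mu$ in its natural visual metric of parameter $\mu_n$, so the theorem asserts that a gap in the Hausdorff dimensions of the two boundaries forces linear distortion.

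Next I would localize to the top horosphere $T_R=\T^n\times\{R\}\subset B_R(Z_\mu)$ and its generator loops. The $i$-th such loop $\gamma_i$ has length $L_i=e^{\mu_i R}$. Any $(\lambda,c)$-quasi-isometric embedding $f$, after discretization on a unit net, sends $\gamma_i$ to a curve of length at most $\lambda L_i+O(c)$ contained in the ball of radius $\lambda R+c$ in $Z_{\mu'}$. By the homotopy hypothesis, $f(\gamma_i)$ is freely homotopic to a loop representing $A(e_i)\in\Z^n$. A direct analysis of geodesics in the warped metric $dt^2+\sum_j e^{2\mu_j' t}dx_j^2$ shows that any loop in a nontrivial class $v=\sum_j k_j e_j$ reaching height at most $t^{\ast}$ has length at least $c_v\,e^{\mu_{j(v)}' t^{\ast}}$ for some direction $j(v)$ with $k_{j(v)}\neq 0$. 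Plugging in $t^{\ast}\le\lambda R+c$ yields, for each generator, a scalar inequality relating the exponents $\mu_i$ to the $\mu_j'$ through the factor $\lambda$.

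To recover the sharp coefficient $\sum\mu_i/\mu_n-\sum\mu_j'/\mu_n'$ one must combine the information from all generators simultaneously: the image $f(T_R)$ is a degree-$\pm 1$ $n$-cycle of total $n$-volume at most $\lambda^n\,e^{(\sum\mu_i)R}$ living inside $B_{\lambda R+c}(Z_{\mu'})$, so a joint volume-and-diameter comparison forces the claimed bound on $\lambda+c$. The main obstacle is precisely this last step: one must juggle the individual length estimates on each $\gamma_i$ and the collective $n$-volume estimate on $T_R$ without losing the sharp constant normalized by $\mu_n$ and $\mu_n'$. This is exactly the situation for which the Poincar\'e-transport technique advertised in the title is tailored: one pushes a Poincar\'e inequality on $T_R$ forward through $f$, compares it (using the homotopy equivalence to identify classes on both sides) to a Poincar\'e inequality on the corresponding horosphere of $Z_{\mu'}$, and the linear gap in $\lambda+c$ then emerges as the ratio required to align the two Poincar\'e exponents.
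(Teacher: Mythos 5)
Your high-level picture is accurate — $\sum\mu_i/\mu_n$ is indeed the conformal dimension of $\partial Z_\mu$, the homotopy hypothesis gives $\pi_1$-control, and some form of Poincar\'e transport is the engine — but the concrete mechanism you propose has two genuine gaps, and the paper's actual route is substantially different.

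First, the loop-length/systole step does not work as stated. A loop in a nontrivial class $v$ in $Z_{\mu'}$ that stays inside a ball of radius $\lambda R+c$ is \emph{not} forced to be long: in the warped metric $dt^2+\sum e^{2\mu'_j t}dx_j^2$ the torus fibers shrink as $t\to-\infty$, so a nontrivial loop can descend to negative $t$ and become arbitrarily short. Equivalently, the horosphere $T_R=\T^n\times\{R\}$ has intrinsic $x_i$-circles of length $e^{\mu_i R}$ but its \emph{ambient} diameter in $Z_\mu$ is only $O(R)$, so the coarse lower bound $|f(x)-f(y)|\ge\lambda^{-1}(|x-y|-c)$ gives no obstruction to $f$ collapsing $\gamma_i$ onto a short low-lying loop. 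The length estimate on individual generators therefore does not yield a lower bound on $\lambda+c$ by itself; one would need $c$ to control heights, and this is exactly the nontrivial part.

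Second, the "degree-$\pm1$ $n$-cycle of $n$-volume at most $\lambda^n e^{(\sum\mu_i)R}$" step is not available. A continuous $(\lambda,c)$-quasi-isometric embedding carries no control on Hausdorff $n$-measure: at scales below $c$ the map can oscillate wildly, so $\mathrm{Vol}_n(f(T_R))$ is unbounded in general. The paper sidesteps this by never integrating over the image directly; instead it pulls back a fixed test form through kernel convolution, which is precisely the regularization step your proposal needs but does not supply.

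The paper's actual argument has a different shape. Using the $\pi_1$-isomorphism it constructs a connected double cover $\tilde Z\to Z$ fitting into a square with the double cover $\tilde Z'\to Z'$, lifts $\Theta$ there, and uses the $\Z/2$-antisymmetric test function $u=e^{\pi i x_n}$ on $\tilde Z'$ (only the top direction $x_n$, not all generator loops). It transports the associated cocycle back to $\tilde Z$ by a kernel convolution (Lemmas \ref{grad2cocycle}, \ref{transpCocycles}), shows the transported function $v$ has $|v|\ge\tfrac12$ on half the ball (using a counting argument to show most of $B(R)$ is sent far out), and thereby obtains a \emph{lower} bound on $C_p(B_Z(R))$ of order $e^{(\sum\mu_i/p)R}$ whenever $p>\sum\mu'_i/\mu'_n$. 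Combined with the upper bound $C_p(B_Z(R))\lesssim e^{\mu_n R}$ from Theorem \ref{expPoincare}, and then choosing $p$ just above $\sum\mu'_i/\mu'_n$ so that $\sum\mu_i/p-\mu_n$ is positive and comparable to $\mu_n(\sum\mu_i/\mu_n-\sum\mu'_i/\mu'_n)$, the exponential gap forces $\lambda_1+c_1\gtrsim(\sum\mu_i/\mu_n-\sum\mu'_i/\mu'_n)R$. The sharp constant thus comes from tuning the \emph{exponent} $p$ in a one-parameter family of inequalities, not from a joint volume-and-diameter comparison of cross-sections; your proposal names the right ingredients but lacks both this tuning and the kernel regularization that makes transport by a merely coarse map well-behaved.
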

Conversely, there exist homotopy equivalences with linearly growing distortion, 
$$\lambda+c\leq const\max|\mu_i-\mu'_i|R,$$ 
from an $R$-ball of $Z_{\mu}$ into $Z_{\mu'}$. This is a special case of a more general result which we describe next.

\bigskip

In a hyperbolic metric space, we give a formula for the distance in terms of the visual distance on the ideal boundary. Using this formula we find quasi-isometry constants for the restriction on balls of a map $\Theta$ between $X$ and $Y$ which is a kind of radial extension of a homeomorphism $\theta$ between ideal boundaries. The following is a non technical statement of Theorem \ref{Theta},
see Section \ref{adequi}
for a complete statement.

\begin{theorem}
\label{upper}
Let $X$, $Y$ be hyperbolic metric spaces. Let $\theta:\partial X\to\partial Y$ be a homeomorphism. We define the following function. For $R>0$,
$$K(R) = \sup\left\{\left|\log\frac{d_{y_0}(\theta(\xi_1), \theta(\xi_2))}{d_{x_0}(\xi_1, \xi_2)}\right|| d_{x_0}(\xi_1, \xi_2) \ge e^{-R} \vee d_{y_0}(\theta(\xi_1), \theta(\xi_2)) \ge e^{-R}\right\}.$$
Here $d_{x_0}$, $d_{y_0}$ denote visual metrics on ideal boundaries.
Then there exists a $(K(R), K(R))$-quasi-isometry between $B_X(x_0,R)$ and $B_Y(y_0,R)$. 
\end{theorem}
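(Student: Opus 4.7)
The plan is to construct $\Theta$ as a ``radial extension'' of $\theta$. For each $x\in B_X(x_0,R)$, I would extend the geodesic segment $[x_0,x]$ to a geodesic ray ending at some $\xi(x)\in\partial X$, and set $\Theta(x)$ to be the point at distance $d_X(x,x_0)$ from $y_0$ on the ray from $y_0$ to $\theta(\xi(x))$. By construction $\Theta$ maps $B_X(x_0,R)$ into $B_Y(y_0,R)$ and preserves distance to the basepoint. The heart of the argument is the distance formula promised in the sentence preceding the theorem: in a proper geodesic hyperbolic space, setting $r_i=d_X(x_i,x_0)$ and $\xi_i=\xi(x_i)$,
$$d_X(x_1,x_2)=r_1+r_2-2(x_1|x_2)_{x_0},\quad (x_1|x_2)_{x_0}=\min\{r_1,r_2,-\log d_{x_0}(\xi_1,\xi_2)\}+O(\delta),$$
with the analogous formula for $d_Y(\Theta(x_1),\Theta(x_2))$ obtained by replacing each $\xi_i$ by $\theta(\xi_i)$. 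Since $\Theta$ preserves $r_1,r_2$, the distortion $|d_Y(\Theta(x_1),\Theta(x_2))-d_X(x_1,x_2)|$ is controlled, up to $O(\delta)$, by twice the gap between the two minima.

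I would then split into three cases. If both $d_{x_0}(\xi_1,\xi_2)$ and $d_{y_0}(\theta(\xi_1),\theta(\xi_2))$ are $\ge e^{-R}$, the definition of $K(R)$ directly bounds the gap by $K(R)$. If both are $<e^{-R}$, then $r_1,r_2\le R$ pins both minima at $\min(r_1,r_2)$, so both distances reduce to $|r_1-r_2|+O(\delta)$. In the delicate mixed case (one visual distance $\ge e^{-R}$, the other much smaller), the ``or'' built into the definition of $K(R)$ is exactly what keeps the ratio of visual distances bounded by $e^{K(R)}$; a short calculation shows that, after taking minima with $r_1,r_2$, the two minima still differ by at most $K(R)$. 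Combining, $|d_Y(\Theta(x_1),\Theta(x_2))-d_X(x_1,x_2)|\le 2K(R)+O(\delta)$, which in particular is a $(K(R),K(R))$-quasi-isometric embedding. Quasi-surjectivity onto $B_Y(y_0,R)$ then follows from surjectivity of $\theta$: any $y\in B_Y(y_0,R)$ lies within $O(\delta)$ of a ray from $y_0$ to some $\eta=\theta(\xi)$, hence is close to $\Theta$ of the radius-$d_Y(y,y_0)$ point on the $x_0$-ray toward $\xi$.

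The main obstacle is the non-canonicity of the radial extension $x\mapsto\xi(x)$: a geodesic segment in a hyperbolic space need not extend uniquely to a ray, so $\Theta$ depends on choices. Any two admissible extensions $\xi,\xi'$ for a given $x$ satisfy $d_{x_0}(\xi,\xi')\lesssim e^{-r(x)}$, so via the distance formula on the $Y$ side (together with the $K(R)$-bound for $\theta$) the two candidate values for $\Theta(x)$ differ by $O(K(R)+\delta)$, which is absorbed into the constants. A secondary but important technical point is to propagate the hyperbolicity constants faithfully so that the final constants come out honestly of order $K(R)$ and are not inflated to $K(R)+R$; this hinges on the distance formula being an \emph{additive}, not multiplicative, hyperbolic approximation.
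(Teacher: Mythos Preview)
Your approach is essentially the same as the paper's: both build $\Theta$ as a radial extension of $\theta$ and rely on the distance approximation $d(P_1,P_2)=t_1+t_2-2\min\{t_1,t_2,t_\infty\}+O(\delta)$ with $t_\infty=-\log d_{x_0}(\xi_1,\xi_2)$ (this is the paper's Lemma~\ref{distApprox}). The only real difference is organizational. The paper splits into four cases according to the relative order of $t_1,t_\infty,t'_\infty$ and derives a multiplicative bound $d_{Z'}/d_Z\le 1+2K(R)/c$ for $d_Z>c$ together with an additive bound for $d_Z\le c$; you instead split on whether the visual distances exceed $e^{-R}$ and obtain a single additive bound $|d_Y-d_X|\le 2K(R)+O(\delta)$. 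Your splitting is actually a bit cleaner, since it plugs directly into the definition of $K(R)$ and uses only that $\min$ is $1$-Lipschitz in each slot.

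Two small remarks. First, the paper imposes Hypothesis~\ref{hypo1} (every point lies within $D$ of a ray from the basepoint), which you are silently using when you ``extend the geodesic segment $[x_0,x]$ to a geodesic ray''; you should state it. Second, your discussion of the non-canonicity of $x\mapsto\xi(x)$ and of quasi-surjectivity via surjectivity of $\theta$ is correct and in fact more explicit than what the paper says; the paper simply applies its one-sided lemma to both $\Theta$ and $\Theta^{-1}$ to get the two-sided statement.
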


For spaces $Z_\mu$, we show that $K(R) = \max_i|\mu_i/\mu_i' - 1|R$. Then we give an example of a pair of non-quasi-isometric negatively curved locally homogeneous manifolds and a homeomorphism $\theta$ between their ideal boundaries with $K(R) \lesssim \log R$. This shows that subpolynomial (possibly logarithmic) distortion growths also occur in the world of hyperbolic metric spaces.

\subsection{Proof}

The proof of Theorem \ref{lower} involves several results which could have an independent interest and more applications. First, we study the transport of Poincar\'e inequalities by quasi-isometries. For this purpose we will use kernels to regularize transported functions. Kernels allow us to transport functions from $Y$ to $X$ while controlling quantitatively their Poincar\'e constants. 

Now we give more details on the proof of the theorem itself. It has several steps. First we introduce non-trivial double-covering spaces $\tilde Z$ and $\tilde Z'$ of $Z=Z_\mu$ and $Z'=Z_{\mu'}$. We prove that $\Theta$ lifts to a $(\lambda_1,2c_1)$-coarse Lipschitz map. Then we take the test-function $e^{\pi i x_n}$ on $\tilde Z'$ which depends only on one coordinate $x_n$. It varies very slowly outside of some ball, so the absolute value of the transported and regularised function $v$ on $\tilde Z$ stays close to $1$. Lemmas \ref{grad2cocycle} and \ref{transpCocycles} allow us to control how the lower bound of Poincar\'e constant changes under transport. This helps us get a lower bound for the Poincar\'e constant of $\tilde Z$ in terms of $\{\mu_i\}, \{\mu_i'\}$ and the constants of quasi-isometric embedding. We also prove an upper bound for the Poincar\'e constant of $\tilde Z$ in Theorem \ref{expPoincare}. The combination of these results provides a lower bound for the homotopy distortion growth for $Z$ and $Z'$.

\section{Basic definitions}

\begin{definition}\label{defQIS}
Two metric spaces $X$ and $Y$ are said to be roughly quasi-isometric if there exists a pair of maps $f: X \to Y$, $g: Y \to X$ and two constants $\lambda > 0$ and $c \ge 0$ such that
\begin{itemize}
\item $|f(x) - f(y)| \le \lambda |x - y| + c$ for every $x, y \in X$,
\item $|g(x') - g(y')| \le \lambda |x' - y'| + c$ for every $x', y' \in Y$,
\item $|g(f(x)) - x| \le c$ for every $x \in X$,
\item $|f(g(x')) - x'| \le c$ for every $x' \in Y$.
\end{itemize}
The word \emph{rough} is often dropped away.
\end{definition}

The first two conditions mean that $f$ and $g$ are nearly Lipschitz if we are looking from afar. The two latter conditions provide that $f$ and $g$ are nearly inverse of each other. It is easy to check that the composition of two quasi-isometries is also a quasi-isometry. So, quasi-isometries provide an equivalence relation on the class of metric spaces.

\begin{remark}
Definition \ref{defQIS} is invariant under taking inverse maps.
\end{remark}

\begin{definition}
A map $f:E\to F$ between metric spaces is a {\em rough $(\lambda_1, c_1, \lambda_2, c_2)$-quasi-isometric embedding} if for any two points $x, y$ of $E$
$$\frac{1}{\lambda_2}(|x - y|_E - c_2) \le |f(x) - f(y)|_F \le \lambda_1 |x - y|_E + c_1.$$
\end{definition}

This definition includes quasi-isometries (with $\lambda_1=\lambda_2$ and $c_1 = c_2$) but it does not require the existence of a nearly inverse map. We introduced four constants instead of two because for our quantitative questions we would like to follow what is the role of each inequality in this definition.

\medskip

We introduce the following definition to formalize our quantitative problem.
\begin{definition}\label{DG}
Let $X, Y$ be metric spaces, $x_0, y_0$ their base points respectively. The quasi-isometric distortion growth is the function
\begin{eqnarray*}
D_{G}(X,x_0,Y,y_0)(R) = \inf\{d|\exists f:B_X(x_0,R) \to Y \text{ a $(\lambda_f,c_f)$-quasi-isometric embedding}
\\\text{such that }
f(x_0)=y_0 \text{ and } d=\lambda_f+c_f \}.
\end{eqnarray*}
\end{definition}
We will study the growth of $D_G$ as a function of $R$.

\section{General discussion}

Here we collect elementary arguments which provide lower bounds on quasi-isometry constants.

\subsection{Comparison of volumes}\label{volumes}

First we will show that comparison of volumes in the domain and in the range plays an important role. %Indeed, we know from \cite{Papasoglu} that two regular trees of degrees at least $4$ are quasi-isometric. Though in general this quasi-isometry does not preserve the radius of a given set.

By volume of a subset in a metric space, we mean the number of balls of a fixed radius needed to cover that subset.

%Let $X$ and $Y$ be two regular trees $T_{d_1}$ and $T_{d_2}$ respectively, suppose that $d_1 < d_2$. Consider two balls of radius $R$ in both of these spaces, denote them by $B_{d_1}(R)$ and $B_{d_2}(R)$ respectively. What is the lower bound for the constants of quasi-isometry between them? The volume of $B_{d_1}(R)$ is roughly $d_1^R$ and the volume of $B_{d_2}(R)$ is $d_2^R$. A $(\lambda_R, c_R)$-quasi-isometry $f'_R: B_{d_1}(R) \to B_{d_2}(R)$ should preserve (in quasi-isometric sense) volumes. In our future calculations we will drop some multiplicative constants (which are bounded constants which depend only on the whole space and not on the particular radius $R$).

%Divide $B_{d_1}(R)$ into balls of radius $c_R$. The image of such a ball has maximal possible radius $(\lambda_R+1)c_R$ and the number of such balls is $Vol(B_{d_1}(R))/Vol(B_{d_1}(c_R)) = d_1^R/d_1^{c_R}$. By definition of a quasi-isometry $B_{d_2}(R)$ should be covered by images of these balls, hence $Vol(B_{d_2}(R)) \le d_1^R/d_1^{c_R} Vol(B_{d_2}((\lambda_R+1)c_R))$, and
%$$d_2^R \le d_1^R/d_1^{c_R} d_2^{(\lambda_R+1)c_R}.$$
%From this relation we conclude that $\lambda_R c_R = \Omega(R)$\footnote{Here $\Omega$ is used in the sense of Bachmann-Landau notation}.

%In the same way, if we c
Consider a space $X$ with an exponential volume growth (for example, hyperbolic plane $\HH^2$) and a space $Y$ with a polynomial volume growth (for example, euclidean space $\R^n$), then quasi-isometry constants between balls $B_R(X)$ and $B_R(Y)$ grow linearly in $R$: $\lambda_R+c_R = \Omega(R)$.

\begin{proposition}\label{ExpPoly}
Let $X$ be a space with exponential volume growth and $Y$ be a space with polynomial volume growth. Then for any $(\lambda,c)$-quasi-isometric embedding of a ball $B_X(R)$ into $Y$ we have $c \ge const\cdot R$.
\end{proposition}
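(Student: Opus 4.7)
The plan is a counting argument comparing maximal separated subsets of $B_X(R)$ with their images in $Y$. First, I would quantify the two volume hypotheses: exponential growth of $X$ provides constants $a, \alpha > 0$ such that, for every $D \le R$, a maximal $D$-separated subset of $B_X(R)$ has cardinality at least $a\, e^{\alpha(R - D)}$, while polynomial growth of $Y$ provides $b, d > 0$ such that any $D'$-separated subset of $B_Y(r)$ has cardinality at most $b\,(r/D')^d$.

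Next, given a $(\lambda, c)$-quasi-isometric embedding $f\colon B_X(R) \to Y$, I would transport a separated net across $f$. Since $|x-y| \ge D$ forces $|f(x)-f(y)| \ge D/\lambda - c$, the choice $D = \lambda(c+s)$ produces an $s$-separated image set inside $B_Y(\lambda R + c)$. Combining the two bounds and taking logarithms yields
\[
\alpha R - \alpha\lambda(c+s) + \log a \;\le\; d\log\!\left(\frac{\lambda R + c}{s}\right) + \log b.
\]
Setting $s=1$ gives $\alpha R \le \alpha\lambda(c+1) + d\log(\lambda R + c) + O(1)$. Since the logarithmic term is of lower order, this forces $\lambda(c+1) \gtrsim R$ for all sufficiently large $R$.

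From here I would extract the announced linear bound. The inequality $\lambda(c+1) \gtrsim R$ immediately yields $\lambda + c \gtrsim \sqrt{R}$, and in the regime where $\lambda$ is held bounded — the regime that ultimately drives the distortion growth function $D_G$ — it sharpens to $c \gtrsim R$. The main obstacle I anticipate is ensuring that the logarithmic error term $d\log(\lambda R + c)$ never overtakes the linear main term across \emph{all} admissible $(\lambda, c)$ regimes, and in choosing the reference ball-counting scale carefully so that the exponential-versus-polynomial dichotomy is exploited with sharp rates; adaptively choosing $s$ as a function of $c$, rather than as a fixed constant, should be the natural move to close the argument uniformly.
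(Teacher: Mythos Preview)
Your approach is essentially the paper's: a packing argument comparing a maximal separated net in $B_X(R)$ with its image in $Y$. The one substantive difference is the choice of separation scale. You separate at scale $D=\lambda(c+1)$ in $X$ so as to obtain a $1$-separated image; this puts $\lambda(c+1)$ in the exponent and yields only $\lambda(c+1)\gtrsim R$, forcing you to restrict to bounded $\lambda$ to extract $c\gtrsim R$. The paper instead separates at scale $2c$ in $X$, using the lower bound in the form $|f(x)-f(y)|\ge(|x-y|-c)/\lambda$ so that images are $c/\lambda$-separated; the exponent becomes $R-2c$ with no $\lambda$, and one reads off $c\ge R/2$ directly, with $\lambda$ entering only through the polynomial (hence logarithmic) factor $(c/\lambda)^\alpha\le(2\lambda R+c)^\alpha$. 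Your anticipated fix---choosing $s$ adaptively---is exactly this move: take $s=c/\lambda$, and use the sharper form $(D-c)/\lambda$ rather than $D/\lambda-c$ for the image separation, so that $D=2c$ and the argument closes uniformly in $\lambda$.
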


For the sake of simplicity, in the proof, we will assume that the volume of a ball $B_X(R)$ in $X$ is $e^R$ and the volume of a ball $B_Y(R)$ in $Y$ is $R^\alpha$.

\begin{proof}
Let $B_X(R)$ be a ball in $X$, $f:B_X(R)\to Y$ be a $(\lambda,c)$-quasi-isometric embedding. Then the diameter of the image $f(B_X(R))$ is $\le 2\lambda R + c$. Consider a maximal set $S$ of points in $B_X(R)$ such that pairwise distances between these points are at least $2c$. We can estimate the cardinality of $S$ as $\#(S)\sim Vol(B_X(R)/Vol(B_X(2c))$. For any two points $s_1, s_2 \in S$ the distance between their images is at least $c/\lambda$. Hence, the volume of $f(B_X(R))$ is at least $\#(S)\times Vol(B_Y(c/\lambda))$.

So, on the one hand $Vol(f(B_X(R))) \le Vol(B_Y(2\lambda R + c))$ and on the other hand $Vol(f(B_X(R))) \ge Vol(B_Y(c/\lambda))Vol(B_X(R)/Vol(B_X(2c))$. We get
$$(c/\lambda)^{\alpha} e^{R-2c}\le (2\lambda R+c)^\alpha.$$

For $R$ big enough, this inequality can be satisfied only if exponential term disappears, that is $c = R/2$.

\end{proof}

\begin{remark}
The same argument yields lower bounds on quasi-isometry constants between balls of the same radius in spaces of different exponential growths. This does not prevent such spaces from being quasi-isometric. For instance, \cite{Papasoglu} shows that two regular trees of degrees at least $4$ are always quasi-isometric. 
The quasi-isometry provided by \cite{Papasoglu} does not preserve the distance to a fixed point.
\end{remark}

\subsection{Connectedness}

Another property which can detect a difference in the coarse geometry of two spaces is connectedness. For example if we cut a ball from a tree then it will fall into several components, but this does not happen with hyperbolic plane. First, we define coarse connectivity.

\begin{definition}
A map $f: X\to Y$ between two metric spaces is called $c$-connected if for any point $x\in X$ and any real number $\delta > 0$ there exists $\varepsilon > 0$ such that if a point $x' \in X$ satisfies $d(x, x') < \varepsilon$ then $d(f(x), f(x')) < c+ \delta$,
\end{definition}

\begin{definition}
1. A metric space $X$ is called $c$-connected if for any two open sets $U, V \subset X$ such that $X = U \cup V$, the intersection of a $c$-neighbourhood of $U$ and $V$ is not empty: $(U+c)\cap V \neq \emptyset$.

2. Equivalently, a metric space $X$ is $c$-connected if for any two points $x, x' \in X$ there exists a $c$-connected map $f: [0,1]\to X$ such that $f(0) = x$ and $f(1) = x'$.
\end{definition}
First and second definitions are evidently equivalent.

Now we are ready to illustrate our idea. In the following proposition we can take for example hyperbolic plane as the space $X$.

\begin{proposition}\label{connected}
Let $X$ be a geodesic metric  space. We suppose that for any points $x, y$ and any positive real numbers $R$ and $R' \le R/2$ the set $B_x(R)\setminus B_y(R')$ is connected and non-empty. Let $Y$ be a tree, let $f: B_x(R)\to Y$ be a $(\lambda_1, \lambda_2, c_1, c_2)$-quasi-isometric embedding. Then $R \le 12\lambda_2c_1+4c_2$.
\end{proposition}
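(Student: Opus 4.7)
I would prove this proposition by contradiction, exploiting the fact that a tree is disconnected by arbitrarily small balls, while the hypothesis on $X$ prevents small balls from separating $B_x(R)$.

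Suppose $R>12\lambda_2c_1+4c_2$. Since $X$ is geodesic, fix $p\in B_x(R)$ with $d(x,p)=R$ and a geodesic $\gamma:[0,R]\to B_x(R)$ from $x$ to $p$. By the lower quasi-isometric bound, $L:=d_Y(f(x),f(p))\ge (R-c_2)/\lambda_2$; let $m$ denote the midpoint of the tree-segment $[f(x),f(p)]$, so that $m$ separates $Y$ with $d_Y(f(x),m)=d_Y(f(p),m)=L/2$. To locate a point of $X$ whose $f$-image is close to $m$, discretize $\gamma$ at step $\delta$ small enough that $\lambda_1\delta+c_1\le 2c_1$; consecutive images then lie within $2c_1$ in $Y$. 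Since $Y$ is a tree, the tree-segment $[f(x),f(p)]$ is contained in the union of tree-segments joining successive images, so some such segment must contain $m$, and one of its endpoints $f(y)$ lies within $c_1$ of $m$. Thus $y\in\gamma([0,R])\subset B_x(R)$, and by the lower quasi-isometric bound the diameter of $f^{-1}(B_Y(m,c_1))$ is at most $2\lambda_2c_1+c_2$; hence this preimage is contained in $B_y(R')$ with $R':=3\lambda_2c_1+c_2$. The size assumption yields $R'\le R/4$, so the connectedness hypothesis applies and $B_x(R)\setminus B_y(R')$ is connected.

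The core step is a coarse same-side principle in the tree. For any two test-points $p_1,p_2\in B_x(R)\setminus B_y(R')$, take a $\delta$-chain between them in this connected set with $\delta$ as above; its $f$-image is a chain in $Y$ with jumps $\le 2c_1$, and every element lies outside $B_Y(m,c_1)$, because the original chain points avoid $B_y(R')\supset f^{-1}(B_Y(m,c_1))$. In a tree, two points outside $B_Y(m,c_1)$ lying on opposite sides of $m$ are at distance $>2c_1$, so consecutive chain elements must lie on the same side of $m$; hence $f(p_1)$ and $f(p_2)$ do too. A contradiction is produced as soon as we exhibit two points of $B_x(R)\setminus B_y(R')$ whose $f$-images lie on opposite sides of $m$; the natural candidates are $p_1=x$ and $p_2=p$, whose images sit on opposite sides by construction.

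The main obstacle is verifying that $x$ and $p$ both lie in $B_x(R)\setminus B_y(R')$, i.e., $t_y\ge R'$ and $R-t_y\ge R'$ when $y=\gamma(t_y)$. If the transition index falls near one end of $\gamma$, one of these may fail. In that case I would replace the offending endpoint by $\gamma(t_y\pm R')$ on the appropriate half of $\gamma$ (automatically outside $B_y(R')$) and show, by applying the same tree transition argument to the relevant half-geodesic, that the chosen test-point's image lies on the correct side of $m$. The slack between the connectedness threshold $R/2$ of the hypothesis and our stronger bound $R'\le R/4$ is precisely what permits this replacement, and accounts for the numerical constants $12$ and $4$ appearing in the statement.
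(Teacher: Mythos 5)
Your overall strategy is closely related in spirit to the paper's proof (both exploit the tension between the connectivity hypothesis on $X$ and the fact that a tree is disconnected by small balls), but the way you localize the argument at a single separating point creates a genuine gap that the paper's formulation avoids.

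Concretely: you fix the midpoint $m$ of the tree segment $[f(x),f(p)]$, find $y\in\gamma$ with $d(f(y),m)\le c_1$, note that $f^{-1}(B_Y(m,c_1))\subset B_y(R')$ with $R'\sim\lambda_2 c_1+c_2$, and then want two test points of $B_x(R)\setminus B_y(R')$ whose images lie on opposite sides of $m$. The problem is that $t_y:=d(x,y)$ (or $R-t_y$) may be much smaller than $R'$. This case is not ruled out by the hypotheses: the only control on $t_y$ from $d(f(y),m)\le c_1$ and $d(f(x),m)=L/2$ is $\lambda_1 t_y+c_1\ge L/2-c_1$, which is vacuous when $\lambda_1$ is large — and the bound to be proved, $R\le 12\lambda_2 c_1+4c_2$, does not involve $\lambda_1$ at all. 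When $t_y<R'$, your replacement $\gamma(t_y+R')$ lies on the $p$-side of $y$ along $\gamma$; the sub-chain of $\gamma$ from $\gamma(t_y+R')$ to $p$ stays outside $B_y(R')$, hence its image stays in a single component of $Y\setminus B_Y(m,c_1)$, and nothing forces that component to be the one containing $f(x)$ rather than the one containing $f(p)$. If it is the latter, your argument produces two test points with images on the same side and no contradiction. More generally, connectedness of $B_x(R)\setminus B_y(R')$ only tells you that $f$ maps this whole set into a single component of $Y\setminus B_Y(m,c_1)$; when $x\in B_y(R')$, you have no guaranteed witness in $B_x(R)\setminus B_y(R')$ for the $f(x)$-side of $m$, and the set may simply not meet that side. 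Your appeal to the slack $R'\le R/4$ does not repair this, since it bounds $R'$ relative to $R$ but says nothing about $t_y$.

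The paper sidesteps this entirely by not privileging any single separating point: it shows that for every $y$ on the tree geodesic $[y_1,y_2]$, any preimage of $B_Y(y,2c_1)$ must lie in $B_{x_1}(4\lambda_2 c_1+c_2)\cup B_{x_2}(4\lambda_2 c_1+c_2)$ (this is where the connectivity hypothesis on $X$ is used, applied to $B_x(R)\setminus B_z(4\lambda_2 c_1+c_2)$ for an arbitrary candidate preimage $z$), and then slides $y$ from $y_1$ to $y_2$ to locate a transition $y',y''$ whose preimages $z',z''$ are respectively near $x_1$ and near $x_2$; comparing $d(z',z'')$ (large) to $d(f(z'),f(z''))$ (small) gives the contradiction. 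If you want to keep your formulation, you would need to restructure along these lines — e.g.\ vary the separating point $m$ along $[f(x),f(p)]$ and run a continuity/transition argument rather than relying on a fixed $m$ and fixed test points $x,p$.
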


\begin{proof}
We are going to prove that there exist three points $x_1, x_2$ and $x$ such that $x_1, x_2 \in B_x(R)$ and the distance $d(x_1, x_2)$ is at least $R$. Consider a ball of radius $2R$ centered in $x_1$. By hypothesis, the set $B_{x_1}(2R)\setminus B_{x_1}(R)$ is non-empty, hence there exists a point $x_2$ such that $2R > d(x_1, x_2)\ge R$. The space $X$ is geodesic, hence now we can take the midpoint of $x_1x_2$ as $x$.

Denote $y_i = f(x_i)$ for $i=1, 2$.

For any point $y$ of a geodesic $(y_1, y_2) \subset Y$ there exists a point $z\in B_x(R)$ such that $d(f(z), y) \le c_1$. This follows from the fact that the image of $(x_1, x_2)$ is $c_1$-connected by the definition of a quasi-isometric embedding and every $c_1$-connected path between $y_1$ and $y_2$ includes the geodesic $(y_1, y_2)$ in its $c_1$-neighbourhood.

%Let us show it. Suppose that there exists a point $x_1 \in B_x(R)$ such that the closest point $x_2 \in B_x(R)$ is at distance $d(f(x_1),f(x_2)) = d > c_1$. Take a point $x_3 \in B_x(R)$ such that $d(x_1,x_3) \le (d/2-c_3)/\lambda_1$. Then $d(f(x_1),f(x_3)) \le d/2 < d$. Contradiction.

Now consider a chain of points $\{\tilde x_i\}$ connecting $x_1, x_2$ and such that $d(\tilde x_i, \tilde x_{i+1}) < c_1/\lambda_1$. Hence, in the image $d(f(\tilde x_i), f(\tilde x_{i+1})) < 2c_1$ and so there exists $i$ such that $d(f(\tilde x_i), y) \le 2c_1$. Notice that $Y\setminus B_y(2c_1)$ has several $(4c_1-2)$-connected components and the distance between these components is at least $4c_1$.

Suppose that a point $z$ is rather far from both $x_1$ and $x_2$: $d(z, x_i) > 4\lambda_2c_1 + c_2, i=1,2$. Suppose also that $R > 2(4\lambda_2c_1 + c_2)$ (if not there is nothing to prove). In the set $B_x(R) \setminus B_z(4\lambda_2c_1 + c_2)$ we also find a $c_1/\lambda_1$-chain. Hence, there exists a point $z' \notin B_z(4\lambda_2c_1+c_2)$ of this path such that $d(f(z'), y) \le 2c_1$. Hence, $d(f(z), f(z')) \le 4c_1$ and by property of quasi-isometry $d(z, z') \le 4\lambda_2 c_1 + c_2$, so $z' \in B_z(4\lambda_2c_1+c_2)$. This leads to a contradiction with the hypothesis of the proposition. Hence, for any $y \in (y_1, y_2)$ there exists $z' \in B_{x_1}(4\lambda_2c_1+c_2) \cup B_{x_2}(4\lambda_2c_1+c_2)$ such that $d(f(z'), y) \le 2c_1$.

Consider two points $y', y''$ on the geodesic $(y_1, y_2)$ which are close enough to each other (more precisely $d(y', y'') \le c_2/\lambda_2$) and such that respective points $z'$ and $z''$ (which minimise distances to $y'$ and $y''$, that is $d(y', f(z')) \le 2c_1$ and $d(y'', f(z'')) \le 2c_1$) lie in different balls $z' \in B_{x_1}(4\lambda_2c_1+c_2)$ and $z'' \in B_{x_2}(4\lambda_2c_1+c_2)$. So, on the one hand $d(z', z'') \ge R - 8\lambda_2c_1 - 2c_2$ and on the other hand, by triangle inequality $d(f(z'), f(z'')) \le c_2/\lambda_2 + 4c_1$. Hence $R - 8\lambda_2 c_1 - 2c_2 \le \lambda_2(c_2/\lambda_2+4c_1)+c_2 = 4\lambda_2c_1+2c_2$. So we get $R \le 12\lambda_2c_1+4c_2$.
\end{proof}

%\begin{proposition}
%Let $X$ be a metric space such that for every points $x, x' \in X$, radius $R$ and smaller radius $R' < R/2$ 
%\end{proposition}

Proposition \ref{connected} implies that any quasi-isometric embedding of an $R$-ball in hyperbolic plane to a tree has distorsion at least $\sqrt{R}$. We wonder whether this conclusion is sharp.

Here is a partial answer. Let $X$ be a geodesic metric space. Here we will construct an example of a $(\sqrt R, \sqrt R, \sqrt R, \sqrt R)$-quasi-isometry of a $R$-ball in $X$ to a $\sqrt R$-ball in a tree, up to taking a $\sqrt{R}$-dense subset. In this statement the essential point is that we will consider trees of variable degree which will depend on $R$.

\begin{proposition}\label{BigTree}
Let $X$ be a geodesic metric space. For any $R > 0$ there exists a $\sqrt R$-dense subset $S(R)\subset B_{X}(R)$, a tree $T(R)$ and a $(\sqrt R, \sqrt R,$ $\sqrt R, \sqrt R)$-quasi-isometric embedding $f_R:S(R)\to T(R)$.
\end{proposition}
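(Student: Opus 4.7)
The plan is to mimic the radial structure of $B_X(x_0,R)$ by a tree whose depth tracks the distance to $x_0$, at scale $\delta=\sqrt R$. I would first choose a maximal $\delta$-separated subset $S(R)\subset B_X(x_0,R)$ containing $x_0$. By maximality, $S(R)$ is $\delta$-dense; by separation, any two distinct points of $S(R)$ satisfy $d_X(s,s')\ge\sqrt R$.

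Next, I would define a parent map $p$ on $S(R)\setminus\{x_0\}$ as follows. For $s$ with $d_X(x_0,s)\ge 2\delta$, fix a geodesic $\gamma_s$ from $x_0$ to $s$ and let $p(s)\in S(R)$ be any point at $X$-distance at most $\delta$ from $\gamma_s(d_X(x_0,s)-2\delta)$, which exists by $\delta$-density. For the remaining $s$, set $p(s)=x_0$. The triangle inequality gives $d_X(x_0,p(s))\le d_X(x_0,s)-\delta$ and $d_X(s,p(s))\le 3\delta$. Let $T(R)$ be the tree with vertex set $S(R)$, rooted at $x_0$, whose edges $\{s,p(s)\}$ each have length $1$, and let $f_R$ be the identity. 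Because the parent map decreases the distance to $x_0$ by at least $\delta$ per step, the depth of every vertex is at most $\lceil R/\delta\rceil\le\sqrt R+1$, so $T(R)$ has diameter at most $2\sqrt R+2$.

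The two quasi-isometric inequalities are now short verifications. For the \emph{upper bound}: $d_{T(R)}\le 2\sqrt R+2$ holds universally, while for distinct $s,s'\in S(R)$ we have $d_X(s,s')\ge\sqrt R$, so $\sqrt R\,d_X(s,s')+\sqrt R\ge R+\sqrt R\ge d_{T(R)}(f_R(s),f_R(s'))$ once $R$ is not tiny. For the \emph{lower bound}: each tree edge is realised by an $X$-chain of length at most $3\delta$, so any tree path of $L$ edges yields $d_X(s,s')\le 3\sqrt R\cdot L=3\sqrt R\,d_{T(R)}(f_R(s),f_R(s'))$, which matches the required form after absorbing the factor $3$ into the multiplicative constant $\sqrt R$.

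The main subtlety---and the reason the statement must restrict to a $\sqrt R$-dense subset rather than apply to the whole ball---is exactly the obstruction behind Proposition \ref{connected}: two points of $B_X(R)$ that are arbitrarily close in $X$ may, once snapped to $S(R)$, lie in widely separated branches of $T(R)$. It is only because we pass to a $\sqrt R$-separated subset that this causes no harm: the gap $\sqrt R$ between distinct $S(R)$-points absorbs the otherwise fatal cross-branch jumps in the tree. What remains is standard bookkeeping of the universal constants hidden on each side of the inequalities $(\sqrt R,\sqrt R,\sqrt R,\sqrt R)$.
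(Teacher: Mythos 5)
Your proof is correct, and the overall strategy is the same as the paper's (build a rooted tree on a $\sqrt R$-separated, $\sqrt R$-dense subset, bound each edge's $X$-length by $O(\sqrt R)$, and use the $O(\sqrt R)$ tree diameter for the upper inequality), but your tree construction is genuinely different and arguably cleaner. The paper builds $S(R)$ generation by generation as a union of maximal $\sqrt R$-separated subsets of the concentric spheres $\{d(x_0,\cdot)=k\sqrt R\}$, then connects each $(k+1)$st-generation point to a nearest $k$th-generation point; this only yields $3\sqrt R$-density directly. You instead take a single maximal $\sqrt R$-separated set in the whole ball (which is automatically $\sqrt R$-dense, matching the literal statement), and define the parent of $s$ by sliding distance $2\sqrt R$ inward along a geodesic $\gamma_s$ from $x_0$ and snapping to a nearby point of $S(R)$; the estimates $d_X(x_0,p(s))\le d_X(x_0,s)-\sqrt R$ and $d_X(s,p(s))\le 3\sqrt R$ give acyclicity, the $O(\sqrt R)$ depth bound, and the same $3\sqrt R$ bound per edge used in the lower inequality. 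The trade-off is that your construction makes explicit choices of geodesics, while the paper's relies instead on the sphere decomposition; both require $X$ geodesic, and both absorb the harmless factor $3$ and the small-$R$ degenerate case into the $(\sqrt R,\sqrt R,\sqrt R,\sqrt R)$ constants, exactly as the paper does. The closing speculation about why one cannot take all of $B_X(R)$ is not needed for the proof and is a little loose (the actual obstruction discussed in the paper is separation, Proposition~\ref{linearTree}, not Proposition~\ref{connected}), but it does not affect the correctness of the argument.
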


\begin{proof}
Consider a ball $B_{X}(R,z_0)$ centered at $z_0$. We will define a discrete set of points $S(R)$ generation by generation in the following way. The $0$-generation is the origin $z_0$. For each $k$ we pick a maximal $\sqrt{R}$-separated subset in the sphere of radius $k\sqrt R$. The resulting set $S(R)$ is $\sqrt{R}$-separated. It is also $3\sqrt{R}$-dense. Indeed, any point in $B((k+1)\sqrt{R})$ is $\sqrt{R}$-close to some point of the sphere of radius $k\sqrt R$, in which the $k$-th generation is $2\sqrt{R}$-dense, by maximality. In particular, every point of the $(k+1)$th-generation is at distance $\leq 3\sqrt R$ from at least one point of the $k$th-generation. This provides us with a tree $T(R)$ with vertex set $S(R)$: we connect each point of the $(k+1)$th-generation to a closest point of $k$th-generation (if the choice is not unique we choose the ancestor arbitrarily). Finally we set the lengths of all edges of the constructed tree $T(R)$ equal to $1$. The diameter of $T(R)$ is $\sim\sqrt R$.

Now we will sketch the proof that the induced map $f$ is a $(\sqrt R, \sqrt R, \sqrt R, \sqrt R)$-quasi-isometry. The right-hand quasi-isometric inequality $d(f(x),f(y))\leq O(\sqrt{R})d(x,y)$ is automatically verified because the diameter of $T(R)$ is $O(\sqrt{R})$. Conversely, given points $x, y\in S(R)$, $z_0$, $f(x)$ and $f(y)$ form a tripod we median point $u$. The distance $d(f(x),f(y))$ is achieved by an arc from $f(x)$ to $u$ followed by an arc from $u$ to $f(y)$ in the tree. The descending arcs from $f(x)$ to $u$ (resp. from $f(y)$ to $u$) consist of jumps in $S(R)$ from generation to generation, each of distance at most $3\sqrt{R}$. Therefore $d(x,y)\leq 3\sqrt{R}d(f(x),f(y))$. 
\end{proof}

In the same manner as in the previous proposition we can construct a $(\sqrt R, \sqrt R, \sqrt R, \sqrt R)$-quasi-isometry between a ball $B_T(R)$ of radius $R$ in a regular tree $T$ of degree $d\ge 2$ and a $\sqrt R$-dense subset in a ball $B_{\HH^2}(k\ln d, z_0)$ in $\HH^2$.

\begin{proposition}\label{denseSet}
For any $R > 0$, there exist a $\sqrt R$-dense subset $S_R$ of a ball $B_{\HH^2}(R)$ in the hyperbolic plane $\HH^2$ and a $(\sqrt R,\sqrt R,\sqrt R,\sqrt R)$-quasi-isometry $f_R:B_T(R)\to B_{\HH^2}(R)$.
\end{proposition}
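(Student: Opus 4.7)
The plan is to reverse the construction of Proposition \ref{BigTree} applied to $X = \HH^2$. I would first build $S_R \subset B_{\HH^2}(R)$ generation by generation: take $\{z_0\}$ as generation $0$ and, for each integer $1 \le k \le \sqrt R$, pick a maximal $\sqrt R$-separated subset of the hyperbolic sphere of radius $k\sqrt R$ around $z_0$ as generation $k$. As in Proposition \ref{BigTree}, maximality on each sphere together with the fact that consecutive spheres are $\sqrt R$ apart makes $S_R$ both $\sqrt R$-separated and $3\sqrt R$-dense in $B_{\HH^2}(R)$; this already delivers the density claim.

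Next I would endow $S_R$ with an abstract tree structure $T(R)$ by attaching every generation-$(k+1)$ vertex to a chosen closest generation-$k$ vertex (breaking ties arbitrarily) and declaring all edges to have length $1$. For exactly the reason given in \ref{BigTree} — radial projection to the inner sphere moves by $\sqrt R$, and by maximality that projection lies within $2\sqrt R$ of generation $k$ — every parent-child pair has hyperbolic distance at most $3\sqrt R$. Consequently the tree diameter is $O(\sqrt R)$. I would then set $B_T(R) := T(R)$ and take $f_R$ to be the identity on vertex sets, viewed now as a map from the tree $B_T(R)$ into $B_{\HH^2}(R)$.

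Verifying the $(\sqrt R, \sqrt R, \sqrt R, \sqrt R)$-quasi-isometry inequalities then reduces to two short observations. For the Lipschitz direction, concatenating parent-child hyperbolic jumps along the tree geodesic between two vertices $x, y$ gives $d_{\HH^2}(f_R(x), f_R(y)) \le 3\sqrt R \cdot d_{T(R)}(x,y)$, which is absorbable into the form $\sqrt R \cdot d_{T(R)}(x,y) + \sqrt R$. For the reverse inequality, the $\sqrt R$-separation of $S_R$ forces $d_{\HH^2}(f_R(x), f_R(y)) \ge \sqrt R$ whenever $x \neq y$, while the tree diameter caps $d_{T(R)}(x,y)$ at $O(\sqrt R)$; the two inequalities together give the required bound $d_{T(R)}(x,y) - \sqrt R \le \sqrt R \cdot d_{\HH^2}(f_R(x), f_R(y))$.

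Since the construction is simply the reverse reading of \ref{BigTree}, there is no genuinely new obstacle. The only step I would be careful to spell out is the parent-distance bound $\le 3\sqrt R$ on each generation, which is the same maximality-plus-radial-projection argument as before and which simultaneously underpins density, the edge-jump estimate, and the diameter bound used above.
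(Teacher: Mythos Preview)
Your construction does not prove the proposition as it is intended in the paper. From the sentence immediately preceding the proposition, $T$ is a \emph{fixed regular tree of degree $d\ge 2$} and $B_T(R)$ is the ball of radius $R$ in that tree; the paper's proof confirms this by counting $(d+1)d^{k-1}$ vertices in generation $k$. What you build instead is the tree $T(R)$ of Proposition~\ref{BigTree} applied to $X=\HH^2$: its degree at each vertex is roughly $e^{\sqrt R}$ (since consecutive spheres carry $\sim e^{k\sqrt R}/\sqrt R$ points) and its diameter is $O(\sqrt R)$, so it is neither a regular tree nor a ball of radius $R$ in one. Declaring ``$B_T(R):=T(R)$'' simply reproves Proposition~\ref{BigTree} in the special case $X=\HH^2$ and does not address the actual statement.

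The paper's argument contains the ingredient you are missing: instead of placing generation $k$ on the sphere of radius $k\sqrt R$, one must determine a radius $R_k$ so that the sphere of radius $R_k$ in $\HH^2$ accommodates exactly the $(d+1)d^{k-1}$ vertices of generation $k$ of the regular tree, with pairwise separation $\sim\sqrt{R_k}$. A volume comparison of the form $e^{\sqrt{R_k}}(d+1)d^{k}\approx e^{R_k}$ gives $R_k\approx k\ln d$, so successive generations of the regular tree land on circles at linearly spaced radii, and the map sends generation $k$ of $B_T(R)$ bijectively onto the chosen points on that circle. Only after this placement does the verification of the quasi-isometry inequalities proceed, and there the paper uses the edge-by-edge estimate along a tree geodesic (your triangle-inequality step), not merely the diameter bound. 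Your final paragraph's reverse inequality, which relies on the tree having diameter $O(\sqrt R)$, is unavailable here since $B_T(R)$ has diameter $2R$.
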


\begin{proof}
First we will construct the set $S_R$ and the quasi-isometry $f_R$ and then we will prove that it is indeed a $(\sqrt R,\sqrt R,\sqrt R,\sqrt R)$-quasi-isometry. Consider $k-th$ generation $G_k$ of vertices in $B_T$ (that is, points at distance $k$ from the base point), there are $(d+1)d^{k-1}$ points in it. Consider a circle centered in $z_0$ of radius $R_k$ (its exact value will be calculated soon) and take a subset $S_k$ of this circle consisting of $(d+1)d^k$ points, such that distance between them is at least $\sqrt R_k$. So we have the following relation (up to some multiplicative constants) which appears from the consideration of volumes
$$Vol(\text{ball of radius }\sqrt{R_k})(d+1)d^k = Vol(\text{circle of radius $R_k$}).$$
For big $R_k$ we have approximately
$$e^{\sqrt{R_k}}(d+1)d^k = e^{R_k}.$$
We set $R_0 = 0$. Then it follows that $R_k \approx k\ln d$. We send points from $G_k$ to $S_k$ naturally. Now we need to add edges between points of successive sets $S_k$. We connect points of $S_k$ to the nearest points from $S_{k-1}$. If there are two possibilities, we choose one arbitrary.

Let us show that this is a $(\sqrt R, \sqrt R, \sqrt R, \sqrt R)$-quasi-isometry. First of all, for any two points $t_1, t_2 \in S$, the distance between their images is at least $\sqrt R$. We have always $d(t_1,t_2) \le R \le \sqrt R d(f_R(t_1),f_R(t_2))+\sqrt R$ and this inequality is checked automatically. Now, let $u_0 = t_1, u_1, \ldots, u_{n-1}, u_n = t_2$ be a geodesic path between $t_1$ and $t_2$. We notice that $d(u_i,u_{i+1})=1 \ge d(f(u_i),f(u_{i+1}))/\sqrt R$ for $i=0,1,\ldots,n-1$. Then
\begin{eqnarray*}
d(t_1,t_2) = \sum_{i=0}^{n-1}d(t_i,t_{i+1}) &\ge& \sum_{i=0}^{n-1}d(f(u_i),f(u_{i+1}))/\sqrt R \ge
\\
d(f(t_1),f(t_2))/\sqrt R &\ge& \left(d(f(t_1),f(t_2))-\sqrt R\right)/\sqrt R,
\end{eqnarray*}
what finishes the proof.

\end{proof}

Though we do not know if we can extend this quasi-isometry to the whole ball $B_{\HH^2}(R)$. The first idea is to do a projection of $B_{\HH^2}(R)$ on a discrete subset, but this projection is a $(1, 1, \sqrt R, \sqrt R)$-quasi-isometry itself, hence the resulting map is a $(R, R, R, R)$-quasi-isometry.

\section{Poincar\'e inequalities and quasi-isometries}

\subsection{The critical exponent $p_{\neq 0}$ for $L^p$-cohomology}

$L^p$-cohomology groups provides invariants for quasi-isometries. 
The continuous first $L^p$-cohomology group of a hyperbolic metric space $X$ is
$$L^pH^1_{cont}(X) := \left\{[f] \in L^pH^1(X)| f \text{ extends continuously to } X \cup \partial X\right\},$$
where $X\cup \partial X$ is Gromov's compactification of $X$.
Following the works of Pierre Pansu, and Marc Bourdon and Bruce Kleiner \cite{BourdonKleiner}, we define the following quasi-isometrical numerical invariant of $X$
$$p_{\neq 0}(X)=\inf\left\{p\ge1| L^pH^1_{cont}(X) \neq 0\right\}.$$

If $p_{\neq 0}$ achieves different values for two spaces $X$ and $Y$, then $X$ and $Y$ are not quasi-isometric. We expect that the difference $|p_{\neq 0}(X)-p_{\neq 0}(Y)|$ also bounds from below the quasi-isometrical distortion growth. We are able to prove this only for a family of examples, and under certain restrictions on maps.

Let $Z_\mu$ and $Z_{\mu'}$ be two variants of the space $\T^n\times(-\infty, \infty)$ with metrics $dt^2+\sum e^{2\mu_i t}dx_i^2$ and $dt^2+\sum e^{2\mu_i' t}dx_i^2$ respectively. The main result of this part is a sharp lower bound for the quasi-isometrical distortion growth between $Z_\mu$ and $Z_{\mu'}$, of the form
$$const\left(p_{\neq 0}(Z_{\mu'}) - p_{\neq 0}(Z_{\mu})\right)R.$$

\subsection{Definition of Poincar\'e constants}

Constants in Poincar\'e inequalities are the quantitative incarnation of $L^p$-cohomology. On Riemannian manifolds, Poincar\'e inequality is defined as follows.
\begin{definition}
Let $X$ be a Riemannian manifold. We say that $X$ satisfies \textit{Poincar\'e inequality} if there exists a real number $C$ such that for any real valued function $f$ on $X$, there exists a real number $m_f$ such that
$$||f - m_f||_p \le C\,||\nabla f||_p.$$
The best constant $C$, denoted by $C_p(X)$, is called \textit{Poincar\'e constant} of $X$.
\end{definition}

We are not satisfied by this definition as we want to work with a wider class of metric spaces. The generalization involves semi-norms induced by kernels (see Definitions \ref{kernel}, \ref{seminorm}). Let $\psi$ be a kernel on $X$. The semi-norm $N_{p, \psi}(f) $ is an analog of the $\Lp$-norm of the gradient on a Riemannian manifold.

\bigskip

First we recall what are kernels on geodesic metric spaces.

\begin{definition}\label{kernel}
Let $X$ be a geodesic space, $dx$ a Radon measure on $X$. A kernel $\psi$ is a measurable non-negative function on $X\times X$ such that
\begin{itemize}
\item $\psi$ is bounded, $\psi\leq S^{\psi}$;
\item for every $x \in X$ $\int_X \psi(x, x') dx' = 1$;
\item the support of $\psi$ is concentrated near the diagonal: there exist constants $\varepsilon^{\psi} > 0$, $\tau^{\psi} > 0$ and $R^{\psi} < \infty$ such that $\psi(x, y) > \tau^{\psi}$ if $d(x,y) \le \varepsilon^{\psi}$; $\psi(x, y) = 0$ if $d(x,y)>R^{\psi}$.
\end{itemize}
$R^{\psi}$ is called the \emph{width}, $\varepsilon^{\psi}$ - the \emph{radius of positivity}, $S^{\psi}$ - the \emph{supremum} and $\tau^{\psi}$ - the \emph{margin} of $\psi$.
\end{definition}

\begin{definition}\label{cocycle}
A \emph{cocycle} on $Y$ is a measurable map $a: Y\times Y \to \R$ such that for every $y_1, y_2, y_3$ in $Y$,
$$a(y_1, y_2) = a(y_1, y_3) + a(y_2, y_3).$$
The convolution of a cocycle with a kernel is defined by
$$a\ast \phi(x,x') = \int_{Y\times Y}a(y,y')\phi(x,y)\phi(x',y')\,dy\,dy'.$$
\end{definition}

\begin{definition}\label{seminorm}
Let $\psi$ be a kernel and $a$ a cocycle on $X$. The semi-norm $N_{p, \psi}$ is defined by
$$N_{p, \psi}(a) = \left(\int_{X\times X} |a(x_1,x_2)|^p\psi(x_1,x_2)\,dx_1 \,dx_2\right)^{1/p}.$$
For $f$ a measurable function on $X$,
$$N_{p, \psi}(f) = \left(\int_{X\times X} |f(x_1)-f(x_2)|^p\psi(x_1,x_2)\,dx_1 \,dx_2\right)^{1/p}.$$
\end{definition}

\begin{definition}
The Poincar\'e inequality associated with a kernel $\psi$ is 
$$||f - m_f||_p \le C_p(X,\psi) N_{p, \psi} (f).$$
\end{definition}

\subsection{Scheme of proof of a lower bound on distorsion}

For the family of spaces $Z_\mu$, it is known that $p_{\neq 0}(Z_\mu)=\frac{\sum\mu_i}{\max\mu_i}$ (unpublished result of P. Pansu). In Theorem \ref{lowerBound} we show that
\begin{itemize}
\item if $p>p_{\neq 0}(Z_\mu)$, then the Poincar\'e constant for a ball of radius $R$ satisfies
$$C_p(B^{Z_\mu}(R)) \ge const.(Vol B(R))^{1/p};$$
\item if $p\leq p_{\neq 0}(Z_\mu)$, then
$$C_p(B^{Z_\mu}(R)) = o\left((Vol B(R))^{1/p}\right).$$
\end{itemize}
Next, we show that under transport by a $(\lambda,c)$-quasi-isometry, $C_p$ is multiplied by at most $e^{(\lambda+c)/a}$ for some positive constant $a$. Transport under quasi-isometric embeddings is more delicate, this is why our arguments work only for a family of examples. For these examples, we are able to get a lower bound. Roughly speaking, it states

\emph{Assume that $p_{\neq 0}(Z_{\mu'}) <p< p_{\neq 0}(Z_{\mu})$. If there exists a $(\lambda,c)$-quasi-isometric embedding $B^{Z_\mu}(R)\to Z_{\mu'}$, which induces an isomorphism on fundamental groups, then 
$$C_p(B^{Z_\mu}(R)) \geq const. e^{-(\lambda+c)/a}C_p(B^{Z_{\mu'}}(R)).$$}

This yields
\begin{eqnarray*}
\lambda+c &\geq& a(\log(C_p(B^{Z_{\mu'}}(R)))-\log(C_p(B^{Z_{\mu}}(R)))\\
&\sim&(p_{\neq 0}(Z_{\mu'}) - p_{\neq 0}(Z_{\mu}))R.
\end{eqnarray*}
which is the announced lower bound on quasi-isometric distortion growth.

\section{Regularisation and quasi-isometries}

In this section we will study how Poincar\'e inequalities are transformed under quasi-isometries. For this purpose we will use kernels, which will help us to regularize transported functions. 

\subsection{Kernels}

The convolution of two kernels is
$$\psi_1 \ast \psi_2 = \int_X \psi_1(x, z)\psi_2(z, y)\,dz,$$
the result is also a kernel. The convolution of a kernel and a function is
$$g\ast\psi(x)= \int_{X}g(z)\psi(x,z)\,dz.$$

\begin{lemma}\label{kernelWidth}
There exists a constant $c_{\tau}$ (which depends on the local geometry of the space $X$) such that for any $\varepsilon>0$ there exists $\tau = c_\tau e^{-\varepsilon}$ and a kernel $\psi$ on $X\times X$ such that for any two points $x_1, x_2$ with $d(x_1, x_2) < \varepsilon$, we have $\psi(x_1, x_2) > \tau$. In other words, for any given radius of positivity $\varepsilon$ there exists a kernel with a margin controlled from below by $c_\tau e^{-\varepsilon}$.
\end{lemma}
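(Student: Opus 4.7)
The plan is to take as $\psi$ the natural kernel given by a normalized indicator of a small metric ball. For $\varepsilon>0$, set $R^{\psi}=\varepsilon^{\psi}=\varepsilon$ and define
$$\psi(x,y)=\frac{\mathbf{1}_{B(x,\varepsilon)}(y)}{V(x,\varepsilon)},\qquad V(x,\varepsilon):=\mathrm{vol}\,B(x,\varepsilon).$$
By construction $\int_X\psi(x,y)\,dy=1$, and $\psi(x,y)=0$ as soon as $d(x,y)>\varepsilon$, so the normalization and support conditions of Definition \ref{kernel} are immediate.

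To obtain the margin I would use the single quantitative input, an upper bound of the form $V(x,\varepsilon)\le c_\tau^{-1}e^{\varepsilon}$ holding uniformly in $x$ (for a space with bounded local geometry and at most exponential volume growth, with the appropriate normalization of the ambient metric implicit in the statement). Whenever $d(x_1,x_2)<\varepsilon$ this gives
$$\psi(x_1,x_2)=\frac{1}{V(x_1,\varepsilon)}\ge c_\tau e^{-\varepsilon},$$
which is precisely the announced margin. The same bounded-geometry input provides a uniform positive lower bound on $V(x,\varepsilon)$ (via, e.g., a positive injectivity radius), and hence the required finite supremum $S^{\psi}$.

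One subtlety is that $\psi$ as constructed is not symmetric in its two arguments, while the semi-norms $N_{p,\psi}$ of Definition \ref{seminorm} and the later transport arguments (Lemmas \ref{grad2cocycle} and \ref{transpCocycles}) are cleanest for symmetric kernels. I would fix this by replacing $\psi$ with
$$\tilde\psi(x,y)=\frac{\mathbf{1}_{d(x,y)\le\varepsilon}}{\max\{V(x,\varepsilon),V(y,\varepsilon)\}},$$
renormalized so that $\int\tilde\psi(x,y)\,dy=1$; a doubling-type estimate makes the renormalization factor uniformly bounded above and away from zero, so the exponential margin bound survives after updating $c_\tau$. The main obstacle is thus not the existence of such a kernel but the book-keeping: verifying that neither the symmetrization nor the point dependence of $V(x,\varepsilon)$ degrades the exponential rate in $\varepsilon$, which is exactly where the bounded-local-geometry hypothesis enters.
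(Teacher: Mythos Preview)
Your construction is correct but takes a different route from the paper. The paper does \emph{not} use the $\varepsilon$-ball kernel directly. Instead it fixes a unit-scale kernel
$$\psi'(x,x')=\mathrm{Vol}(B(x,1))^{-1}\,\mathbf{1}_{\{d(x,x')\le 1\}}$$
and takes iterated convolutions $\psi'^{\ast m}$. From the convolution estimates (Lemma~1.2 in \cite{Pansu2}) the $m$-fold convolution has radius of positivity at least $m/2$ and margin at least $\tau'^{m}v(\tfrac12)^{m-1}$, where $v(r)=\inf_x \mathrm{Vol}\,B(x,r)$. Choosing $m\sim 2\varepsilon$ gives the exponential lower bound.

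The practical difference is in the hypothesis used. Your argument needs a \emph{uniform upper bound} $V(x,\varepsilon)\le c_\tau^{-1}e^{\varepsilon}$ on large-ball volumes, i.e.\ at most exponential growth at rate $1$; this is a global assumption, and for the spaces $Z_\mu$ the actual growth rate is $\sum\mu_i$, so the exponent in your margin would be $-(\sum\mu_i)\varepsilon$ rather than $-\varepsilon$. The paper's convolution argument uses only \emph{lower} bounds $v(1),v(\tfrac12)>0$ on small-ball volumes, which is genuinely a local-geometry condition and closer to the hypothesis announced in the statement. Both approaches yield an exponential margin; the paper's is slightly more robust in that it never looks at balls of radius larger than $1$.

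Two minor remarks. First, your concern about symmetry is unnecessary: Definition~\ref{kernel} does not require $\psi$ to be symmetric, and indeed the paper's own building block $\psi'$ is not symmetric either. The later lemmas work as stated for asymmetric kernels. Second, the renormalization step you propose after symmetrizing would itself require a doubling-type comparison between $V(x,\varepsilon)$ and $V(y,\varepsilon)$ for $d(x,y)\le\varepsilon$, which is an extra hypothesis you have not justified; simply dropping the symmetrization avoids this entirely.
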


\begin{proof}
We start from kernel 
$$\psi'(x,x')=Vol(B(x,1))^{-1}1_{\{d(x,x')\leq 1\}}$$ 
with radius of positivity $\varepsilon'=1$ and margin $\tau'=v(1)^{-1}$, where, for $r>0$, $v(r)$ denotes the infimum of volumes of balls of radius $r$ in $X$. We know from the proof of Lemma 1.2 in \cite{Pansu2} that the $m$-th convolution $\psi'^{\ast m}$ has radius of positivity $\varepsilon'_m \ge m(\varepsilon'/2)=m/2$ and margin $\tau'_m \ge \tau'^m v(\frac{1}{2})^{m-1}$. We denote $v(\frac{1}{2})^{m-1}$ by $c_\tau$ which finishes the proof.
\end{proof}

The following facts are known, see \cite{Pansu2}.

\begin{lemma}\label{seminorms}
Let $X$ be a geodesic metric space such that the infimum $\inf\{Vol B(x,r)|x\in X\}$ of volume of balls of radius $r$ is positive. Semi-norms $N_{p, \psi}$ are pairwise equivalent. More precisely, let $\psi_1$ and $\psi_2$ be two kernels on $X$. Then
$$N_{\psi_2} \le \hat C N_{\psi_1},$$
where
$$\hat C = \frac{\sup\psi_1\sup\psi_2}{c_\tau} \frac{R^{\psi_2}}{\varepsilon^{\psi_1}} (2e)^{R^{\psi_2}/\varepsilon^{\psi_1}}.$$
\end{lemma}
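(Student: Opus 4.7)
The plan is to bridge the two scales $\varepsilon^{\psi_1}$ and $R^{\psi_2}$ by inserting the $m$-fold self-convolution $\psi_1^{\ast m}$ as an intermediary, with $m\approx R^{\psi_2}/\varepsilon^{\psi_1}$, and then bound $N_{\psi_2}(f)\le A\cdot N_{\psi_1^{\ast m}}(f)\le A\cdot B\cdot N_{\psi_1}(f)$. The factor $A$ comes from a pointwise kernel comparison and the factor $B$ from a telescoping-chain argument; their product should recover $\hat C$.

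For the pointwise comparison, I would invoke exactly the induction used in the proof of Lemma~\ref{kernelWidth}: iterating $m$ convolutions of $\psi_1$ yields a kernel with radius of positivity at least $m\varepsilon^{\psi_1}/2$, hence at least $R^{\psi_2}$ for $m=\lceil 2R^{\psi_2}/\varepsilon^{\psi_1}\rceil$, and margin $\tau_m\geq(\tau^{\psi_1})^m v(\varepsilon^{\psi_1}/2)^{m-1}$. Since the support of $\psi_2$ lies inside the positivity region of $\psi_1^{\ast m}$, one has $\psi_2\leq(\sup\psi_2/\tau_m)\,\psi_1^{\ast m}$ pointwise, so
$$N_{\psi_2}(f)^p\;\leq\;\frac{\sup\psi_2}{\tau_m}\,N_{\psi_1^{\ast m}}(f)^p.$$

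To compare $N_{\psi_1^{\ast m}}$ with $N_{\psi_1}$, I would unfold the definition of the convolution and apply the convexity estimate $|\sum_{i=0}^{m-1}a_i|^p\le m^{p-1}\sum_i |a_i|^p$ to the telescoping sum $f(y_0)-f(y_m)=\sum_i (f(y_i)-f(y_{i+1}))$ with $y_0=x_1$, $y_m=x_2$. This produces
$$N_{\psi_1^{\ast m}}(f)^p\;\leq\;m^{p-1}\sum_{i=0}^{m-1}\int|f(y_i)-f(y_{i+1})|^p\prod_{j=0}^{m-1}\psi_1(y_j,y_{j+1})\,dy_0\cdots dy_m.$$
In each summand, the variables other than $y_i,y_{i+1}$ are integrated out using the marginal property $\int\psi_1(y,\cdot)=1$ of a kernel (symmetrizing $\psi_1$ first if necessary, which only costs a harmless factor in $\sup\psi_1$), reducing each term to $N_{\psi_1}(f)^p$. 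Summing over $i$ gives $N_{\psi_1^{\ast m}}(f)\leq m\,N_{\psi_1}(f)$, and combined with the previous step,
$$N_{\psi_2}(f)\;\leq\;m\left(\frac{\sup\psi_2}{\tau_m}\right)^{1/p}N_{\psi_1}(f).$$

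The main obstacle is the final arithmetic: one must recognize that, upon substituting $m=\lceil 2R^{\psi_2}/\varepsilon^{\psi_1}\rceil$ and the explicit margin $\tau_m$ from Lemma~\ref{kernelWidth}, the combination $m\,(\sup\psi_2/\tau_m)^{1/p}$ collapses to the stated form $(\sup\psi_1\sup\psi_2/c_\tau)(R^{\psi_2}/\varepsilon^{\psi_1})(2e)^{R^{\psi_2}/\varepsilon^{\psi_1}}$. The universal geometric constant $c_\tau$ absorbs the infimum-volume factor $v(\varepsilon^{\psi_1}/2)^{m-1}$, while the base $2e$ in the exponential is the precise price paid for bridging each unit of scale $\varepsilon^{\psi_1}$ by a convolution step — essentially a Stirling-type bound on the iterated volumes. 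A subsidiary subtlety is that $\psi_1$ need not be symmetric, so before integrating out marginals one either replaces $\psi_1$ by $(\psi_1(x,y)+\psi_1(y,x))/2$ (which preserves all kernel parameters up to constants) or carries both marginal conditions through the calculation.
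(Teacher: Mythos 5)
Your overall scheme — insert the iterated convolution $\psi_1^{\ast m}$ as a bridge kernel, compare $\psi_2$ to it pointwise using the margin bound from Lemma~\ref{kernelWidth}, then telescope $N_{\psi_1^{\ast m}}$ down to $N_{\psi_1}$ — is the natural route, and the pointwise comparison step ($\psi_2\leq(\sup\psi_2/\tau_m)\,\psi_1^{\ast m}$ once $m\varepsilon^{\psi_1}/2\geq R^{\psi_2}$) is fine. The gap is in the telescoping step, and it is not the subsidiary subtlety you flag but a genuine one.

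After convexity, the $i$-th term is $\int|f(y_i)-f(y_{i+1})|^p\prod_{j=0}^{m-1}\psi_1(y_j,y_{j+1})\,dy_0\cdots dy_m$. The normalization in Definition~\ref{kernel} is one-sided: $\int_X\psi(x,x')\,dx'=1$, an integral in the \emph{second} variable. That lets you peel off $y_m,y_{m-1},\dots,y_{i+2}$ (each innermost integral is $\int\psi_1(y_{k},y_{k+1})\,dy_{k+1}=1$), but on the left side you are forced to integrate $\int\psi_1(y_0,y_1)\,dy_0$, and then $\int\psi_1^{\ast j}(y_0,y_j)\,dy_0$ in general — \emph{first}-variable marginals, which the kernel axioms do not control. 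Your proposed fix, replacing $\psi_1$ by $(\psi_1(x,y)+\psi_1(y,x))/2$, does not restore this: the symmetrized function loses the normalization $\int\psi(x,\cdot)=1$ (so it is no longer a kernel in the paper's sense), and re-normalizing it requires knowing $\int\psi_1(y,x)\,dy$ is bounded, which is exactly the quantity you were trying to avoid. What is actually needed is an upper bound on $Vol\,B(\cdot,R^{\psi_1})$, giving $\int\psi_1(x,y)\,dx\leq \sup\psi_1\cdot\sup_z Vol\,B(z,R^{\psi_1})$; such a bound is available in the paper's working setting (Riemannian manifold with curvature and injectivity-radius bounds, as in Lemma~\ref{grad2cocycle}), but it does not follow from the Lemma's stated hypothesis (which gives only a \emph{lower} bound on ball volumes, and the kernel normalization only caps $Vol\,B(\cdot,\varepsilon^{\psi_1})$, not $Vol\,B(\cdot,R^{\psi_1})$). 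You should make the bounded-geometry input explicit rather than hoping symmetrization absorbs it.

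A secondary issue: with the fix, each of the $m$ integration steps on the left contributes a factor of the first-variable marginal bound, so the price is $M^m$ with $M=\sup_y\int\psi_1(x,y)\,dx$, not a single factor of $\sup\psi_1$ as in the displayed $\hat C$. Your closing paragraph concedes the arithmetic is not carried out; once it is, one sees that these per-step factors must be rolled into $c_\tau$ and the base of the exponential exactly as the paper does in Lemma~\ref{kernelWidth} (where $c_\tau=v(1/2)^{m-1}$ already depends on $m$). That is defensible given the paper's loose bookkeeping in $c_\tau$, but it should be stated, not waved at as ``a Stirling-type bound.''
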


\begin{lemma}\label{grad2cocycle}
Let the space $X$ be a Riemannian manifold and have the following properties: (1) its injectivity radius is bounded below, (2) its Ricci curvature is bounded from below. Then the volumes of balls are bounded from below (Croke inequality \cite{Croke}) and from above (Bishop inequality).

1) For any function $g$ define a cocycle $u(x, y) = g(x) - g(y)$. Then for any $p$ and any kernel $\psi'$ with bounded derivatives there exists a kernel $\psi_1$ such that the $\LL^p$-norm of $\nabla (g\ast\psi')$ (we regularise $g$) is bounded from above by a $\psi_1$-seminorm of the corresponding cocycle $u$
$$||\nabla (g\ast \psi')||_p \le N_{p, \psi_1}(u)$$
with the kernel $\psi_1$ defined as follows
$$\psi_1 = \frac{\sup\nabla\psi'\sup\psi'}{Vol(B(z', R^{\psi'}))}1_{\{d(z, z') \le R^{\psi'}\}}.$$

2) Conversely, there also exists a kernel $\psi_2$ such that
$$N_{p, \psi_2}(u) \le C||\nabla g||_p,$$
where $C$ depends only on dimension. Here the kernel $\psi_2$ can be taken as
$$\psi_2(x, y) = \max\{1, \Theta(x, y)^{-1}\}1_{\{d(x,y) \le R\}},$$
where $\Theta(x, y)$ is the density of the volume element in polar coordinates with origin at $x$
$$\Theta(x,y)^{-1}dy = drd\theta$$
and $R>0$ can be chosen arbitrarily.
\end{lemma}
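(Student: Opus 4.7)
The two volume bounds are standard consequences of Croke's inequality (below) and Bishop's comparison (above) under the hypotheses on injectivity radius and Ricci curvature, so I would treat them as given; they are used only to ensure the kernels below are non-degenerate and the Jacobian $\Theta$ behaves. The two inequalities (1) and (2) I would handle separately.

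\textbf{Part (1).} The starting point is the identity $\int_X \psi'(x,z)\,dz=1$, which differentiated under the integral gives $\int_X \nabla_x\psi'(x,z)\,dz=0$. For any reference $z'$ I therefore have
$$\nabla(g\ast\psi')(x)=\int_X (g(z)-g(z'))\,\nabla_x\psi'(x,z)\,dz,$$
and averaging in $z'$ against the probability density $\psi'(x,z')$ produces the symmetric expression
$$\nabla(g\ast\psi')(x)=\int\!\!\int (g(z)-g(z'))\,\nabla_x\psi'(x,z)\,\psi'(x,z')\,dz\,dz'.$$
I would take absolute values, apply Jensen's inequality with respect to the normalised weight $|\nabla_x\psi'(x,z)|\psi'(x,z')$, raise to the $p$-th power, integrate in $x$, and apply Fubini. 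The inner integral $\int_X|\nabla_x\psi'(x,z)|\psi'(x,z')\,dx$ is supported on pairs with $d(z,z')\lesssim R^{\psi'}$ and is bounded by $\sup|\nabla\psi'|\cdot\sup\psi'\cdot Vol(B(z',R^{\psi'}))$, which factors out to leave exactly the kernel $\psi_1$ in the statement.

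\textbf{Part (2).} The plan is to represent differences of $g$ as integrals along geodesics. For $z_1,z_2$ with $d(z_1,z_2)=r\le R$ and $R$ below the injectivity radius I connect them by the minimising geodesic $\gamma$ and apply the fundamental theorem of calculus together with Hölder's inequality to get $|g(z_1)-g(z_2)|^p\le r^{p-1}\int_0^r|\nabla g(\gamma(t))|^p\,dt$. I would then rewrite $N_{p,\psi_2}(u)^p$ using polar coordinates at $z_2$: $z_1=\exp_{z_2}(r\omega)$ with Jacobian $\Theta(z_2,z_1)$. The combination $\Theta(z_2,z_1)\psi_2(z_1,z_2)=\Theta\max\{1,\Theta^{-1}\}\le\max\{\Theta,1\}$ is the cancellation for which $\psi_2$ was designed; it prevents the Jacobian from blowing up. Finally I would swap order of integration via the substitution $x=\gamma(t)$, re-expressing the geodesic based at $z_2$ as one based at $x$, and identify the resulting integral as $\|\nabla g\|_p^p$ times a constant depending only on dimension (and quantitatively on $R$) coming from the radial factor $r^{p-1}$ and the angular integration.

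\textbf{Main obstacle.} The delicate step is the final change of variables in Part (2): one must carefully track two Jacobians in the substitution $(z_1,z_2)\leftrightarrow(x,\mathrm{direction},\mathrm{lengths})$ and verify that the residual factor is again absorbed by $\max\{1,\Theta^{-1}\}$. The kernel $\psi_2$ is engineered precisely so that this cancellation is clean and the final constant depends only on dimension; everything else reduces to well-known manipulations of kernels in the style of Pansu.
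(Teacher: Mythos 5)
For Part~(1) your proposal matches the paper's own argument in every essential respect: the paper also expresses $\nabla(g\ast\psi')(x)$ as a double integral of $g(z')-g(z)$ against $\nabla_x\psi'(z,x)\psi'(z',x)\,dz\,dz'$ (arriving there via the cocycle $\alpha=u\ast\psi'$, differentiating in the first slot and then setting $y=x$, while you insert the constant $g(z')$ using $\int\nabla_x\psi'(x,z)\,dz=0$ --- the two manipulations are equivalent), and then applies H\"older (your Jensen-against-a-normalised-weight is the same convexity step in disguise) to pull out the displayed kernel $\psi_1$. Both versions are loose about the exact normalisation constant in $\psi_1$ (indeed the stated $\psi_1$ is not normalised to total mass $1$), but the structural argument is identical.

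For Part~(2) the paper gives no proof at all; it is quoted as ``known, see \cite{Pansu2}'' and the text explicitly says it is only reproducing the proof of the \emph{first} statement. Your sketch --- representing $g(z_1)-g(z_2)$ as a geodesic line integral, applying H\"older along the geodesic, writing $N_{p,\psi_2}(u)^p$ in polar coordinates about one endpoint, and exploiting the cancellation $\Theta\cdot\max\{1,\Theta^{-1}\}\le\max\{\Theta,1\}$ before the final change of variables $x=\gamma(t)$ --- is a plausible and standard reconstruction of the Pansu-style argument, and you correctly flag the bookkeeping of Jacobians in the last step as the delicate point. Since the paper offers no proof of Part~(2) to compare against, this sketch can only be judged on its own terms, and it is reasonable; the main thing you would need to make rigorous is the final substitution, verifying that the extra radial factor $r^{p-1}$ and the angular integration indeed produce a constant depending only on dimension once $R$ is fixed (the Ricci and injectivity-radius hypotheses are exactly what keep $\Theta$ two-sidedly bounded on the scale $R$, so the claim is believable).
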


In the third hypothesis we propose to use $R = 1$, then $\psi_2$ is bounded by $1$ and the width of its support is also $1$. For reader's convenience, we include the proof of the first statement of the last Lemma, following \cite{Pansu2}.
\begin{proof}
Denote by $\alpha$ the cocycle $u \ast \psi'$. Then for any $y$,
$$\nabla(u\ast\psi')(x) = \frac{\partial \alpha(x, y)}{\partial x} = \int \left(g(z') - g(z)\right)d_x\psi'(z, x)\psi'(z', y)\,dz\,dz'.$$
Choose $y = x$. Then we obtain
$$|\nabla(g \ast\psi'(x))| \le \sup\nabla\psi'\sup\psi \int_{B(x, R^\psi)\times B(x, R^\psi)}|g(z') - g(z)|\,dz\,dz'.$$
Now applying H\"older inequality we get the needed statement with the kernel
$$\psi_1 = \frac{\sup\nabla\psi'\sup\psi'}{Vol(B(z', R^{\psi'}))}1_{\{d(z, z') \le R^{\psi'}\}}.$$
\end{proof}

This lemma gives us an idea how to generalize Poincar\'e inequalities for the case of arbitrary metric spaces. Of course, such Poincar\'e inequality depends on a choice of a kernel $\psi$. Let $f$ be an $\LL_p$-function on $X$, $\psi$ a kernel on $X$. The Poincar\'e inequalities for $f$ associated to $\psi$ with constants $c_f$ and $C_p(f)$ is
$$||f - c_f||_p \le C_p(f)||N_{p,\psi}(u)||.$$
The Poincar\'e constant $C_p(X,\psi)$ is a constant such that for any $\LL_p$-function $f$ Poincar\'e inequality is checked with $C_p(f) = C_p(X,\psi)$. It follows from Lemma \ref{seminorms} that the existence of Poincar\'e constant does not depend on the choice of a kernel.

\subsection{Transporting functions by quasi-isometries}

Let $X, Y$ be two metric spaces, let $f:X\to Y$ and $f':Y\to X$ be $(K, c)$-quasi-isometries between them such that for any $x\in X$, $d(x, f'\circ f(x)) \le c$ and vice versa (that is, they are inverse in the quasi-isometrical sense). Let $g$ be a measurable function on $Y$. We want to find a way to transport and to regularize $g$ by our quasi-isometry to obtain a similar measurable function on $X$. We will take
$$h(x) = \int_Y g(z) \psi(f(x), z) \,dz$$
as a function on $X$ corresponding to $g$. This integral exists for all $x$ because $\psi$ is measurable by the second variable by definition. Still we want $h$ to be also measurable. For that, it will be sufficient if $f$ is measurable too.

\begin{proposition}
Let $f$ be a $(\lambda_1,\lambda_2, c_1,c_2)$-quasi-isometric embedding between metric spaces $X$ and $Y$. Then there exists a measurable $(\lambda_1,\lambda_2,3c_1,c_2+2c_1/\lambda_1)$-quasi-isometric embedding $g$ at distance $2c_1$ from $f$.
\end{proposition}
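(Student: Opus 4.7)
The plan is to discretize the source and make $g$ locally constant on a measurable partition of $X$ into small cells, with $g$ taking values among the points $f(s)$ at a chosen $\varepsilon$-net $S \subset X$. The only free parameter is the mesh $\varepsilon$ of the net, and one tunes it so the distance from $g$ to $f$ stays $\le 2c_1$ while the additive constants degrade by the required amounts.

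Concretely, I would set $\varepsilon = c_1/\lambda_1$ and choose a maximal $\varepsilon$-separated subset $S\subset X$ (by Zorn's lemma, or via separability if one prefers a countable $S$). Then $S$ is automatically $\varepsilon$-dense. Enumerate $S=\{s_1,s_2,\ldots\}$ and build the ``tie-broken'' Voronoi partition
\[
V_{s_n} = \bigl\{x\in X : d(x,s_n)\le d(x,s_k) \text{ for all } k, \text{ and } d(x,s_n)<d(x,s_k) \text{ for } k<n\bigr\}.
\]
Each $V_{s_n}$ is a countable Boolean combination of open/closed balls, hence Borel measurable, and the $V_{s_n}$ partition $X$. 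Define
\[
g(x) = f(s_n) \quad \text{whenever } x\in V_{s_n}.
\]
Then $g$ is measurable (constant on countably many measurable pieces), and for every $x\in V_{s_n}$ we have $d(x,s_n)\le \varepsilon = c_1/\lambda_1$, so
\[
|g(x)-f(x)| = |f(s_n)-f(x)| \le \lambda_1\cdot\tfrac{c_1}{\lambda_1}+c_1 = 2c_1,
\]
which gives both the distance bound and ensures $g$ is well-defined.

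Verifying the quasi-isometric embedding inequalities is then just the triangle inequality applied to the net. For any $x\in V_{s_n}$, $y\in V_{s_m}$ one has $|s_n-s_m|\le |x-y|+2c_1/\lambda_1$, hence
\[
|g(x)-g(y)| \le \lambda_1|s_n-s_m|+c_1 \le \lambda_1|x-y| + 2c_1 + c_1 = \lambda_1|x-y|+3c_1,
\]
and
\[
|g(x)-g(y)| \ge \tfrac{1}{\lambda_2}\bigl(|s_n-s_m|-c_2\bigr) \ge \tfrac{1}{\lambda_2}\bigl(|x-y|-c_2-2c_1/\lambda_1\bigr),
\]
which are exactly the claimed constants $(\lambda_1,\lambda_2,3c_1,c_2+2c_1/\lambda_1)$.

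The main ``obstacle'' here is not analytic but organizational: one needs the Voronoi pieces to be simultaneously measurable and of controlled diameter, which forces the tie-breaking convention via the enumeration of $S$, and one must pick $\varepsilon$ small enough that $\lambda_1\varepsilon\le c_1$ so that $d(g,f)\le 2c_1$, but not so small that the additive constants blow up more than claimed. The choice $\varepsilon=c_1/\lambda_1$ is the unique one that makes both the distance bound and the additive constant $3c_1$ come out sharp.
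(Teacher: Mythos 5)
Your argument is essentially the paper's: the paper simply takes a measurable partition $P$ of $X$ of mesh $c_1/\lambda_1$, chooses a base point $x_A$ in each cell, sets $g|_A = f(x_A)$, and runs exactly the triangle-inequality estimates you run, arriving at the same constants; your contribution is to exhibit such a partition explicitly via a Voronoi-type construction on a maximal $\varepsilon$-net. One small fix is needed there: your cells $V_{s_n}$ require the infimum $\inf_k d(x,s_k)$ to be attained, which can fail in a general metric space even though $d(x,S)<\varepsilon$; replace the nearest-point rule by a first-hit rule, assigning $x\in X$ to $W_{s_n}=\{x: d(x,s_n)<\varepsilon \text{ and } d(x,s_k)\ge\varepsilon \text{ for } k<n\}$ (nonempty union by maximality of $S$), which for separable $X$ gives a countable Borel partition into cells of diameter $<2\varepsilon$ contained in $\varepsilon$-balls around the $s_n$, after which all your estimates go through unchanged.
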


\begin{proof}
Take a measurable partition $P$ of $X$ with a mesh $c_1/\lambda_1$. For each set $A\in P$ we choose a base point $x_A$. We set $g$ be constant on $A$
$$g|_A = f(x_A).$$

Take any two points $x, x' \in X$. Assume $x \in A$ and $x' \in A'$ where $A, A' \in P$. Then
\begin{eqnarray*}
d(g(x),g(x')) &=& d(f(x_A),f(x_{A'})) \le \lambda_1d(x_A, x_A') + c_1
\\
&\le&\lambda_1(d(x, x')+d(x,x_A)+d(x',x_{A'}))+c_1 \le \lambda_1d(x,x')+3c_1.
\end{eqnarray*}
In the same way we prove the right-hand inequality.
\end{proof}
This proposition gives us an idea that we can always pass to measurable quasi-isometries without significant loss in constants. From now we will consider only measurable quasi-isometries.

\subsection{Transporting cocycles}

\begin{definition}
Let $a$ be a cocycle on $Y$, $f:X\to Y$ be a quasi-isometric embedding and $\phi$ be a kernel on $Y$. The transporting convolution of $a$ with $\phi$ by $f$ is the cocycle defined on $X$ by
$$a\ast_t \phi(f)(x,x') = \int_{Y\times Y}a(y,y')\phi(f(x),y)\phi(f(x'),y')\,dy\,dy'.$$
\end{definition}

\begin{lemma}\label{transpCocycles}
Let $X, Y$ be two metric space. Suppose also that $X$ has a bounded geometry (that is for any $R > 0$ the supremum of volume of balls of radius $R$ in $X$ is bounded). Let $\phi$ be a kernel on $Y$, let $a$ be a cocycle on $Y$ and let $\psi$ be a kernel on $X$. Let also $f$ be a $(\lambda_1,\lambda_2,c_1,c_2)$-quasi-isometric embedding. Then there exists a kernel $\tilde\psi$ on $Y$ such that 
$$N_{\psi}(a\ast_t\phi(f))\le CN_{\tilde\psi}(a),$$ 
where $$C \le \left(c_\tau^Y\right)^{-1} e^{R^{\psi'}} \sup\psi \left(\sup\phi \sup Vol B_X(2\lambda_2R^\phi + c_2)\right)^2.$$
%where $\tilde\psi$ is a kernel on $Y$ and
%$$C \le \left(\frac{\sup\psi}{\tau}\right)^{1/p} (S^{\phi})^{2/p},$$
%where $\tau = c_\tau^Y e^{-Q^\phi_1(R^{\psi})}$ (for the definition of constant $c_\tau^Y$ see lemma \ref%{kernelWidth}, it depends on the local geometry of the space $Y$ only).

%In particular, if $\phi$ is associated with a $(\lambda_1,\lambda_2,c_1,c_2)$-quasi-isometry or a quasi-isometric embedding,
%$$C \le \frac{1}{c_\tau^Y}(\sup\psi)^{3/p}e^{\left((2 + \lambda_1)R^\psi + c_1\right)/p}(2\lambda_1R^\psi + c_1)^{2/p}.$$

\end{lemma}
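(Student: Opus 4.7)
The plan is to unwrap the definition of the transporting convolution, apply Jensen's inequality inside, swap the order of integration via Fubini, and then compare the resulting $Y$-integral against an honest kernel supplied by Lemma \ref{kernelWidth}. Since $\phi$ is normalized, for any fixed $x, x' \in X$ the product $\phi(f(x), y)\phi(f(x'), y')\,dy\,dy'$ is a probability measure on $Y \times Y$, so Jensen's inequality applied to $t \mapsto |t|^p$ yields
$$|a \ast_t \phi(f)(x, x')|^p \le \int_{Y \times Y} |a(y, y')|^p \phi(f(x), y)\phi(f(x'), y')\,dy\,dy'.$$
Multiplying by $\psi(x, x')$, integrating over $X \times X$, and swapping the order of integration gives
$$N_\psi(a \ast_t \phi(f))^p \le \int_{Y \times Y} |a(y, y')|^p \Psi(y, y')\,dy\,dy',\quad \Psi(y, y') := \int_{X \times X}\phi(f(x), y)\phi(f(x'), y')\psi(x, x')\,dx\,dx'.$$

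Next I would estimate $\Psi$ pointwise. The integrand is nonzero only when $d_Y(f(x), y) \le R^\phi$, $d_Y(f(x'), y') \le R^\phi$ and $d_X(x, x') \le R^\psi$. Two admissible points $x_1, x_2$ (for the same $y$) satisfy $d_Y(f(x_1), f(x_2)) \le 2R^\phi$, and the lower inequality of the quasi-isometric embedding forces $d_X(x_1, x_2) \le \lambda_2(2R^\phi + c_2)$; by the bounded geometry assumption the set of admissible $x$ has $X$-volume at most $V := \sup Vol B_X(2\lambda_2 R^\phi + c_2)$, and similarly for $x'$. Together with the trivial sup-bounds on $\phi$ and $\psi$, this gives
$$\Psi(y, y') \le \sup\psi \cdot (\sup\phi)^2 \cdot V^2.$$
Moreover, whenever $\Psi(y, y') > 0$ the triangle inequality combined with the upper quasi-isometric inequality forces $d_Y(y, y') \le 2R^\phi + \lambda_1 R^\psi + c_1 =: R^{\tilde\psi}$, so $\Psi$ is concentrated near the diagonal of $Y \times Y$.

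Finally I would invoke Lemma \ref{kernelWidth} applied on $Y$ with radius of positivity $R^{\tilde\psi}$ to produce a kernel $\tilde\psi$ satisfying $\tilde\psi(y, y') \ge c_\tau^Y e^{-R^{\tilde\psi}}$ on the support of $\Psi$. Combining with the pointwise bound,
$$\Psi(y, y') \le (c_\tau^Y)^{-1} e^{R^{\tilde\psi}} \sup\psi \cdot (\sup\phi)^2 \cdot V^2 \cdot \tilde\psi(y, y'),$$
and substituting back yields $N_\psi(a \ast_t \phi(f)) \le C\, N_{\tilde\psi}(a)$ with $C$ of the announced form. The main obstacle is the pointwise estimate on $\Psi$: one has to convert a condition on $\phi(f(\cdot), y)$ into a bound on the $X$-volume of the admissible set, which is precisely where both the lower (injectivity) side of the quasi-isometric embedding and the bounded geometry of $X$ come into play. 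Once that volume estimate is secured, the rest is a mechanical assembly of Jensen, Fubini and Lemma \ref{kernelWidth}.
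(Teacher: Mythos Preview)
Your proposal is correct and follows essentially the same route as the paper: Jensen/H\"older inside the convolution, Fubini to isolate the auxiliary weight $\Psi$ (the paper calls it $\psi'$), the support bound $d_Y(y,y')\le 2R^\phi+\lambda_1 R^\psi+c_1$, the volume estimate on the $x$-fibres via the lower quasi-isometric inequality and bounded geometry, and finally the comparison with a genuine kernel furnished by Lemma~\ref{kernelWidth}. One small slip: from $d_Y(f(x_1),f(x_2))\le 2R^\phi$ the embedding gives $d_X(x_1,x_2)\le 2\lambda_2 R^\phi + c_2$, not $\lambda_2(2R^\phi+c_2)$; you use the correct radius in the definition of $V$ anyway, so this does not affect the conclusion.
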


\begin{proof}
By definition,
\begin{eqnarray*}
(N_{\psi}(a\ast_t\phi(f)))^p = \int_{X\times X}|a \ast_t\phi(x, x')|^p\psi(x,x')dxdx'=
\\ =\int_{X\times X}
\left|
\int_{Y\times Y}a(y,y')\phi(f(x),y)\phi(f(x'),y')dydy'
\right|^p\psi(x,x')dxdx'
\end{eqnarray*}
applying H\"older inequality
$$\le \int_{X\times X}\int_{Y\times Y}|a(y,y')^p|\phi(f(x),y)\phi(f(x'),y')dydy' \psi(x,x')dxdx'$$
denoting $\psi'(y,y') = \int_{X\times X}\phi(f(x),y)\phi(f(x'),y')\psi(x,x')dxdx'$
$$=\int_{Y\times Y}|a(y,y')|^p\psi'(y,y')dydy'.$$
We need to show that $\psi'$ is dominated by some kernel $\tilde\psi$.

First we will prove that $\psi'(y,y') = 0$ if $d(y,y') > R^{\psi'}$ for some $R^{\psi'} = R^{\phi} + \lambda R^{\psi'} + c$.
If $d(x,x') > R^\psi$ then by the definition of kernels $\psi(x,x') = 0$. Otherwise, suppose that $d(x, x') < R^{\psi'}$. If $d(y, y') > R^{\psi'}$, then by triangle inequality either $\phi(f(x), y)$ or $\phi(f(x'), y')$ vanishes:
$$d(f(x), f(x')) \le \lambda d(x,x') + c \le \lambda R^{\psi'} + c.$$
Hence, if, for example, $d(f(x), y) \le R^{\phi}$, then $d(f(x'), y') \ge R' - d(f(x), f(x')) > R^{\phi}$ which leads to $\phi(f(x'), y') = 0$.

We estimate $\psi'(y,y')$ from above in the following way. First we write
$$\psi'(y,y') \le \sup\psi \int_{X\times X}\phi(f(x),y)\phi(f(x'),y')dxdx'.$$
Now we have to integrate $\int_{X}\phi(f(x),y)dx$ and $\int_{X}\phi(f(x'),y')dx'$.

For any $y \in Y$, if $d(f(x), y) > R^\phi$ then $\phi(f(x), y) = 0$. Hence, the diameter of the set of points $X_y \in X$ such that for any $x \in X_y$ $d(f(x), y) \le R^\phi$, is at most $\lambda_22R^\phi + c_2$. Hence, $\int_X \phi(f(x), y)dx \le \left(\sup_{x\in X} Vol B_X(x,2\lambda_2R^\phi + c_2)\right) \sup_{Y\times Y}\phi$, that is $\sup_{x\in X} Vol B_X(x,2\lambda_2R^\phi + c_2)$ stands for the supremum of volumes of all balls of radius $2\lambda_2R^\psi + c_2$ in $X$. So we come to the following upper-bound for $\psi'(y,y')$
$$\psi'(y,y') \le \sup\psi \left(\sup\phi \sup Vol B_X(2\lambda_2R^\phi + c_2)\right)^2.$$

Lemma \ref{kernelWidth} helps us to construct a kernel $\tilde\psi$ such that its radius of positivity is at least $R^{\psi'}$ and at the same time we control its margin from below. $\tilde\psi(y,y') \ge \tau = c_\tau^Y e^{-R^{\psi'}}$ whenever the distance between $y, y'$ does not exceed $R^{\psi'}$. Hence, 
$$\psi'(y,y') \le \tau^{-1} \tilde\psi(y, y') \sup\psi \left(\sup\phi \sup Vol B_X(2\lambda_2R^\phi + c_2)\right)^2.$$
So, we obtain
$$C \le \left(c_\tau^Y\right)^{-1} e^{R^{\psi'}} \sup\psi \left(\sup\phi \sup Vol B_X(2\lambda_2R^\phi + c_2)\right)^2.$$

\end{proof}

\section{Poincare inequality for exponential metric}
\label{poin}

We will give an upper bound for the Poincar\'e constant in a ball of radius $R$ in a space with the metric $dt^2 + \sum_i e^{2\mu_i t}dx_i^2$.

%\begin{theorem}\label{expPoincare}
%Let $X$ be a metric space with the metric $dt^2 + \sum_i e^{2\mu_i t}dx_i^2$. Suppose also that $\mu_1 \le \ldots \le \mu_n$. Then the upper bound for the Poincar\'e constant for a ball $B(R) \subset X$ of radius $R$ is
%$$C^p(\mu) \le c(1+e^{\mu_n R}),$$
%where $c$ is a constant depending only on $p$ and $\mu = \sum\mu_i$.
%\end{theorem}

\begin{theorem}\label{expPoincare}
Let $\tilde X = \R_+\times\R^n$ with the metric $dt^2 + \sum_i e^{2\mu_i t}dx_i^2$. Let $X = \tilde X/\Gamma$ where $\Gamma$ is a lattice of translations in the factor $\R^n$. Then the Poincar\'e constant for a ball $B(R)$ in $X$ is
$$C_p(\mu) \le \frac{p}{\mu}+(A(\mu))^{1/p}C_p(\T^n)e^{\mu_n R},$$
where $\mu = \sum\mu_i$, $A(\mu)$ is a constant depending only on $\mu$, $C_p(\T^n)$ is a Poincar\'e constant for a torus $\T^n$.
\end{theorem}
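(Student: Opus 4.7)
The plan is to exploit the warped-product structure of $X$ by decomposing every test function $f$ on $B(R)$ into a ``radial'' part depending only on $t$ and an ``oscillatory'' part with zero mean on every horospherical slice, then bounding the two pieces by separate Poincar\'e inequalities. For each $t$ I would let $\bar f(t) = |\T^n|^{-1}\int_{\T^n} f(t,x)\,dx$ be the unweighted torus average at height $t$, and take the centering constant to be $m_f = \bar f(R)$. The triangle inequality
\[
\|f-m_f\|_p \le \|f-\bar f\|_p + \|\bar f-m_f\|_p
\]
splits the proof into two independent estimates. Before starting I would observe that $B(R) \subset [0,R]\times\T^n$, so the $t$-integrations stay within $[0,R]$.

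For the oscillatory part I would apply Poincar\'e inequality slice by slice. The slice $\{t\}\times\T^n$ carries the flat metric $\sum_i e^{2\mu_i t}dx_i^2$, whose longest side has length proportional to $e^{\mu_n t}$; hence its Poincar\'e constant is dominated by $C_p(\T^n)\,e^{\mu_n t}$. Combined with the pointwise inequality $|\nabla_{\mathrm{slice}} f|\le|\nabla f|$, the slice-wise Poincar\'e inequality gives
\[
\int_{\T^n}|f(t,x)-\bar f(t)|^p e^{\mu t}\,dx \le \bigl(C_p(\T^n)e^{\mu_n t}\bigr)^p\int_{\T^n}|\nabla f|^p e^{\mu t}\,dx.
\]
Integrating over $t\in[0,R]$ and bounding $e^{p\mu_n t}$ by $e^{p\mu_n R}$ produces $\|f-\bar f\|_p\le (A(\mu))^{1/p}C_p(\T^n)e^{\mu_n R}\|\nabla f\|_p$, where $(A(\mu))^{1/p}$ absorbs the lattice-dependent comparison between torus diameter and longest side and the ratio of weighted volumes.

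For the radial part, since $\bar f - m_f$ depends only on $t$ the estimate reduces to a 1D weighted Poincar\'e inequality on $[0,R]$ with weight $e^{\mu t}\,dt$ and centering value $g(R)$. I would establish the sharp form
\[
\left(\int_0^R|g(t)-g(R)|^p e^{\mu t}\,dt\right)^{1/p} \le \frac{p}{\mu}\left(\int_0^R|g'(t)|^p e^{\mu t}\,dt\right)^{1/p}
\]
by integrating the identity $\bigl(|g-g(R)|^p e^{\mu t}\bigr)' = p|g-g(R)|^{p-1}\operatorname{sgn}(g-g(R))\,g'\,e^{\mu t}+\mu|g-g(R)|^p e^{\mu t}$ from $0$ to $R$: the boundary term at $R$ vanishes, the one at $0$ has a favourable sign, and H\"older on the cross term yields $\mu\,\|g-g(R)\|\le p\,\|g'\|$ in $L^p(e^{\mu t}\,dt)$. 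Applying this to $g=\bar f$ together with Jensen's inequality $|\bar f'(t)|^p\le|\T^n|^{-1}\int_{\T^n}|\partial_t f|^p\,dx \le |\T^n|^{-1}\int_{\T^n}|\nabla f|^p\,dx$ then gives $\|\bar f-m_f\|_p\le(p/\mu)\|\nabla f\|_p$.

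Adding the two bounds yields the claimed inequality. The main technical point is the sharp 1D constant $p/\mu$, obtained from the derivative trick above; a more naive application of H\"older would produce an undesirable $R$-dependence or a worse dependence on $\mu$. A secondary, geometric, issue is to verify that the Poincar\'e constant of the warped slice scales like the longest side $e^{\mu_n t}$ rather than like some average of the $e^{\mu_i t}$, which is what makes the exponent $\mu_n R$ (and not $\mu R/n$ or similar) appear in the announced upper bound; this comes from the standard one-dimensional Wirtinger inequality applied in the slowest-contracting direction of the lattice.
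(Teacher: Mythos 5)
Your proof is correct and reaches the announced bound, but it does not follow the paper's route; it is in fact a bit cleaner. The paper takes as centering constant an average $c=\int_{R-1}^{R}c_r\,dr$ of torus Poincar\'e centers for the boundary-value functions $f_r(\theta)=f(r,\theta)$ (extended as $t$-independent functions), and splits $f-c=\int_{R-1}^R\bigl[(f-f_r)+(f_r-c_r)\bigr]dr$: the first piece is controlled by the 1D weighted Poincar\'e inequality, the second by applying the torus Poincar\'e inequality to the slice functions $f_r$. This forces an integration over an annulus $r\in[R-1,R]$ and a delicate bookkeeping between the factors $e^{\mu R}$ (from integrating the $t$-independent function $f_r-c_r$ against the weight over the whole ball) and $e^{\mu(R-1)}$ (from the lower bound on the weight on the annulus), and these are what produce the paper's constant $A(\mu)$. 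You instead project $f$ onto its unweighted torus averages $\bar f(t)$, center at $m_f=\bar f(R)$, and split $f-m_f=(f-\bar f)+(\bar f-m_f)$; the first (oscillatory) piece is handled slice by slice using the Euclidean torus Poincar\'e inequality together with the comparison $|\nabla_e|\le e^{\mu_n t}|\nabla_{\mathrm{slice}}|\le e^{\mu_n t}|\nabla|$, and the second (radial) piece reduces, via Jensen, to the same sharp 1D weighted inequality with constant $p/\mu$. The decompositions are essentially dual to each other --- the paper freezes the radial variable at the boundary and oscillates in $\theta$, you freeze the $\theta$-average and oscillate radially --- and both rely on exactly the same two ingredients (the 1D inequality and the torus Poincar\'e inequality with the gradient comparison), but yours avoids the annulus averaging and yields the stated bound with the cleaner constant $A(\mu)=1$ (up to the standard mean-vs-median normalisation in the torus inequality, which can be absorbed into $C_p(\T^n)$). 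The 1D identity you integrate is the differentiated form of the paper's integration-by-parts argument, so that ingredient is the same.
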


First, we fix the direction $\theta = (x_1, \ldots, x_n)$.

\subsection{Poincar\'e inequality in a fixed direction}
\begin{lemma}
Let $\tilde X = \R_+\times\R^n$ with the metric $dt^2 + \sum_i e^{2\mu_i t}dx_i^2$. Let $X = \tilde X/\Gamma$ where $\Gamma$ is a lattice of translations in the factor $\R^n$. Let $R\in\R^+\cup\{\infty\}$. Then for any fixed direction $\theta = (x_1, \ldots, x_n)$
$$\left(\int_a^{R}|f(t) - c_\theta|^pe^{\mu t}dt\right)^{1/p} \le \frac{p}{\mu}\left(\int_a^{R}|f'(t)|^pe^{\mu t}dt\right)^{1/p},$$
where $c_\theta = f(R,\theta)$ or $c_\theta = \lim_{R\to\infty}f(R,\theta)$.
\end{lemma}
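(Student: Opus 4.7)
The claim is a one-dimensional weighted Hardy-type inequality along a radial line. I would prove it by integration by parts together with Hölder's inequality, which is the standard technique for exponentially weighted Hardy inequalities.

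Fix the direction $\theta$ and set $g(t)=f(t,\theta)-c_\theta$. In the case $c_\theta=f(R,\theta)$ we have $g(R)=0$, and in the limiting case $R=\infty$ we have $\lim_{t\to\infty}g(t)=0$; in either situation the boundary contribution at the upper endpoint vanishes. Using the identity $e^{\mu t}=\tfrac{1}{\mu}(e^{\mu t})'$ and integrating by parts,
\begin{align*}
\int_a^R |g(t)|^p e^{\mu t}\,dt
&= \frac{1}{\mu}\Bigl[|g(t)|^p e^{\mu t}\Bigr]_a^R - \frac{p}{\mu}\int_a^R |g(t)|^{p-1}\operatorname{sgn}(g(t))\, g'(t)\, e^{\mu t}\,dt.
\end{align*}
The boundary term at $t=R$ is zero, and the boundary term at $t=a$ equals $-|g(a)|^p e^{\mu a}/\mu\le 0$, so we obtain
$$\int_a^R |g|^p e^{\mu t}\,dt \le \frac{p}{\mu}\int_a^R |g|^{p-1}|g'|\, e^{\mu t}\,dt.$$

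Next I would split the weight as $e^{\mu t}=e^{\mu t(p-1)/p}\cdot e^{\mu t/p}$ and apply Hölder's inequality with exponents $p/(p-1)$ and $p$ to obtain
$$\int_a^R |g|^{p-1}|g'|\,e^{\mu t}\,dt
\le \left(\int_a^R |g|^p e^{\mu t}\,dt\right)^{(p-1)/p}\left(\int_a^R |g'|^p e^{\mu t}\,dt\right)^{1/p}.$$
Combining the two displays and dividing by $\bigl(\int_a^R |g|^p e^{\mu t}\,dt\bigr)^{(p-1)/p}$ (which we may assume is nonzero, else the inequality is trivial) yields the claim, since $g'(t)=\partial_t f(t,\theta)=f'(t)$ along the fixed direction.

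There is no real obstacle here; the only point requiring minor care is the justification of dropping the boundary term at $t=R$ in the limiting case $R=\infty$, which follows from the assumption $c_\theta=\lim_{R\to\infty}f(R,\theta)$ and from the finiteness of the right-hand side (if the gradient integral diverges the inequality is vacuous). The $\theta$-dependence plays no role in the argument beyond fixing a one-dimensional slice, which is why the weight simplifies to $e^{\mu t}$ with $\mu=\sum\mu_i$ (the Jacobian of the transverse volume form along the slice).
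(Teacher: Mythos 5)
Your proposal is correct and follows essentially the same route as the paper: integrate by parts against $e^{\mu t}=\frac{1}{\mu}(e^{\mu t})'$, discard the nonpositive boundary term at $t=a$, and close the estimate with H\"older. The one point you note as requiring care, namely that $|g(t)|^p e^{\mu t}\to 0$ as $t\to\infty$ (the naive limit $g(t)\to 0$ alone does not kill the $e^{\mu t}$ factor), is indeed where the paper spends its effort, establishing via Newton--Leibniz and H\"older that $|g(t)|^p e^{\mu t}\lesssim \int_t^\infty|g'|^p e^{\mu s}\,ds$, a tail of the assumed-finite gradient integral; your appeal to finiteness of the right-hand side is the correct reason, just left unexpanded.
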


\begin{proof}
Let $f$ be a function such that its partial derivative $\partial f/\partial t$ is in $\LL^p(e^{\mu t}dt,[0,+\infty))$ where $p > 1$. By H\"older inequality we get
$$\int_0^{+\infty}\left|\frac{\partial f}{\partial t}\right|dt \le \left(\int_0^{+\infty}\left|\frac{\partial f}{\partial t}\right|^pe^{\mu t}dt\right)^{1/p}\left(\int_0^{+\infty}e^{-(\mu t/p)(p/(p-1))}\right)^{1-1/p} < +\infty.$$
Hence, for every fixed direction $\theta$ there exists a limit $\lim_{t \to \infty} f(t, \theta)$.

First, if $R = \infty$, we prove that $|f(t) - c_\theta|^pe^{\mu t} \to 0$ as $t \to \infty$. We apply the Newton-Leibniz theorem and then H\"older inequality to $|f(t) - c_{\theta}|$. We have
\begin{eqnarray}
|f(t) - c_\theta| = \left|\int_t^{\infty}\frac{\partial f}{\partial s}ds\right| \le \int_t^{\infty}\left|\frac{\partial f}{\partial s}\right|ds \le
\\\nonumber
\le \left(\int_t^{\infty}\left|\frac{\partial f}{\partial s}\right|^pe^{\mu u}du\right)^{1/p}\left(\int_t^{\infty}e^{-\mu s/(p-1)}ds\right)^{1 - 1/p}.
\end{eqnarray}
We calculate the last integral
$$\int_t^{\infty}e^{-\mu s/(p-1)}ds = -\frac{p-1}{\mu}e^{-\frac{\mu s}{p-1}}|_t^{\infty} = \frac{p-1}{\mu}e^{-\frac{\mu t}{p-1}}.$$
With the notation $D_0 = \left(\frac{p-1}{\mu}\right)^{p-1}$,
$$|f(t) - c_\theta|^p \le D_0e^{-\mu t}\int_t^{+\infty}\left|\frac{\partial f}{\partial s}\right|^p e^{\mu s}ds.$$
Hence
$$|f(t) - c_\theta|^pe^{\mu t} \le D_0\int_t^{+\infty}\left|\frac{\partial f}{\partial s}\right|^p e^{\mu s}ds \to 0$$
as $t \to +\infty$.

Now we integrate by parts
\begin{equation}\label{byParts}
\int_a^R|f(t) - c_\theta|^pe^{\mu t}dt = \left[|f(t) - c_\theta|^p \frac{e^{\mu t}}{\mu}\right]_a^R - \int_a^Rf'(t)p|f(t) - c_\theta|^{p-1}\frac{e^{\mu t}}{\mu}dt.
\end{equation}
As $c_\theta = f(R)$
$$\int_a^R|f(t) - c_\theta|^pe^{\mu t}dt = -|f(a) - c_\theta|^p \frac{e^{\mu a}}{\mu} - p\int_a^Rf'(t)|f(t) - c_\theta|^{p-1}\frac{e^{\mu t}}{\mu}dt.$$
We notice that the integral at the left is positive. On the right hand side, the first term is negative (for this reason we will drop it soon). Hence, the second term should be positive.
By H\"older inequality,
\begin{equation}\label{holder}
\int_a^{R}(-f'(t))|f(t) - c_\theta|^{p-1}\frac{e^{\mu t}}{\mu}dt \le \left(\int_a^{R}|f'(t)|^p\frac{e^{\mu t}}{\mu}dt\right)^{1/p}\left(\int_a^{R}|f(t) - c_\theta|^p\frac{e^{\mu t}}{\mu}dt\right)^{(p-1)/p}.
\end{equation}
We introduce the following notations
\begin{eqnarray*}
X = \int_a^{R}|f(t) - c_\theta|^pe^{\mu t}dt,
\quad
Y = \int_a^{R}|f'(t)|^pe^{\mu t}dt.
\end{eqnarray*}
Using these notations we return to Eq. (\ref{byParts}). First we drop the term $-|f(a) - c_\theta|^p e^{\mu a}/\mu$ and then we apply Eq. (\ref{holder}) 
$$X \le \frac{p}{\mu}Y^{1/p}X^{(p-1)/p}.$$
So, we get immediately that
$$X^{1/p} \le \frac{p}{\mu}Y^{1/p}$$
which proves Poincar\'e inequality in a fixed direction.
\end{proof}

\subsection{Poincar\'e inequality for exponential metric}

Here we will finish the proof of Theorem \ref{expPoincare}. We introduce the following notations $\tilde f_r(t,\theta) = f(r,\theta)$ (the function is considered as a function of two variables), $f_r(\theta) = f(r, \theta)$ (the function is considered as a function of one variable).

We have already proved that for any $\theta \in \T^n$,
$$\int_0^R|f(t,\theta)-f(R,\theta)|^pe^{\mu t}dt \le \left(\frac{p}{\mu}\right)^p\int_0^R\left|\frac{\partial f}{\partial t}\right|^pe^{\mu t}dt.$$
We integrate over $\theta$ and we introduce the volume element for $\tilde X$, $dVol = drd\theta e^{\sum\mu_i r}$. We get
$$\int_{B(R)}|f - f_R|^pdVol \le \left(\frac{p}{\mu}\right)^p\int_{B(R)}|\nabla f|^pdVol.$$

Denote the Euclidean gradient by $\nabla_e$. By the form of the metric we see that $e^{2\mu_i t}|dx_i^2| = 1$. Hence, $||\nabla_e f_r|| \le e^{\mu_n t}|\nabla f|$. Now we notice that
$$\int_{R-1}^R||\nabla_e f_r||^p_{\LL^p(\T^n)}e^{\mu t}dt \ge e^{\sum\mu_i(R-1)}\int_{R-1}^R||\nabla_e f_r||^p_{\LL^p(\T^n)}dt.$$
So we write
\begin{equation}\label{int1}
e^{\sum\mu_i(R-1)}\int_{R-1}^R||\nabla_e f_r||^p_{\LL^p(\T^n)}dt \le e^{p\mu_nR}\int_{B(R)\backslash B(R-1)}|\nabla f|^pdVol.
\end{equation}
Fixing $r \in [R-1, R]$, let us write Poincar\'e inequality on the torus for the function $f_r(\theta)$. There exists a number $c_r$ such that
$$\int_{\T^n}|f_r(\theta) - c_r|^pd\theta \le (C_p(\T^n))^p\int_{\T^n}|\nabla_e f_r(\theta)|^pd\theta,$$
where $C_p(\T^n)$ is a Poincar\'e constant for $\T^n$. Next we consider the function $f_r(\theta)$ as a function on the ball $B(R)$ which does not depend on $t$. We integrate this inequality over $t$,
\begin{eqnarray*}
\int_{B(R)}|f_r(\theta) - c_r|^pdVol &\le& (C_p(\T^n))^p\int_{0}^{R}\int_{\T^n}|\nabla_e f_r(\theta)|^pd\theta e^{\sum\mu_i t}dt
\\
&\le& \frac{e^{\sum \mu_i R}}{\sum\mu_i}(C_p(\T^n))^p\int_{\T^n}|\nabla_e f_r(\theta)|^pd\theta.
\end{eqnarray*}
We integrate over $r$ from $R-1$ to $R$ and exploit inequality (\ref{int1}). It gives
$$
\int_{R-1}^R\left(\int_{B(R)}|f_r(\theta) - c_r|^pdVol\right)dr \le A(\mu)(C_p(\T^n))^pe^{p\mu_nR}\int_{B(R)\backslash B(R-1)}|\nabla f|^pdVol,
$$
where $A(\mu)$ is a constant which depends only on $\mu_i, i=1,\ldots, n$. Now we apply H\"older inequality again,
\begin{eqnarray*}\int_{R-1}^{R}||f_r-c_r||_{\LL^p(B(R))}dr &\le& \left(\int_{R-1}^R\int_{B(R)} |f_r-c_r|^pdVol\,dr\right)^{1/p}
\\
&\le& \left(A(\mu)(C_p(\T^n))^pe^{p\mu_nR}\int_{B(R)\backslash B(R-1)}|\nabla f|^pdVol\right)^{1/p}
\\
&\le& \left(A(\mu)\right)^{1/p}C_p(\T^n)e^{\mu_nR}||\nabla f||_{\LL^p(B(R))}
\end{eqnarray*}

Set $c = \int_{R-1}^R c_r dr$. In the following chain of inequalities we will first apply triangle inequality and then we will use the fact that the norm of the integral is less than or equal to the integral of the norm (briefly $||\int fdr|| = \int||f||dr$).
\begin{eqnarray*}
||f - c||_{\LL^p(B(R))} &=& \left\|\int_{R-1}^R(f-c_r)dr\right\|_{\LL^p(B(R))}
\\
&\le& \left\|\int_{R-1}^R(f-f_r)dr\right\|_{\LL^p(B(R))} + \left\|\int_{R-1}^R(f_r-c_r)dr\right\|_{\LL^p(B(R))}
\\
&\le& \int_{R-1}^R\left(||f-f_r||_{\LL^p(B(R))} + ||f_r-c_r||_{\LL^p(B(R))}\right)dr 
\\
&\le& \frac{p}{\mu}||\nabla f||_{\LL^p(B(R))} + \left(A(\mu)\right)^{1/p}C_p(\T^n)e^{\mu_nR}||\nabla f||_{\LL^p(B(R))}.
\end{eqnarray*}

\section{Lower bound on Poincar\'e constant}

Let $Z_\mu$ denote $\T^n\times\R$ equipped with metrics $dt^2 + \sum e^{2\mu_i t}dx_i^2$, where we suppose $\mu_1 \le \mu_2 \le \ldots \le \mu_n$. Let $O$, $O'=(0,\ldots,0)$ be base points of $Z$ and $Z'$ respectively. We notice that the "width" of $\T^n\times(-\infty, 0]$ is finite so it is at finite distance from a ray $(-\infty, 0]$, so from now on, we shall focus our attention on the part of $B_Z (O,R)$ where $t\ge 0$. Indeed, we want to consider quasi-isometric embeddings of balls $\T^n\times[-R,R]$. The volume of $T^n\times(-\infty,0]$ is finite, whereas the volume of $\T^n\times[0,R]$ is exponential in $R$. Hence, only a negligible part of $\T^n\times[-R,R]$ can be sent to the negative part $\T^n\times(-\infty,0]$ (compare to subsection \ref{volumes}).

Consider a ball $B_Z(O,R)$ in $Z=Z_\mu$ and its quasi-isometric embedding in $Z'=Z_{\mu'}$. In this section we will give a lower bound for the sum of quasi-isometric constants $\lambda+c$ in function of $R$, using our results on transported Poincar\'e inequalities.
We have to notice that our method does not apply to a general quasi-isometric embedding. We will consider only quasi-isometric embeddings which are homotopy equivalences.

Why do we want to consider these spaces $Z_\mu$? Following U.HamenstŠ\"adt \cite{Hamen} and X. Xie \cite{Xie},\cite{SXie}, there is a family of hyperbolic spaces whose quasi-isometric classification is known, that are spaces with transitive Lie groups of isometries. In this family (classified by E.Heintze \cite{Heintze}), the easiest spaces are $X_\mu$. We also know their $\Lp$ cohomologies (Pansu, \cite{Pansu2}). Still they are rather difficult because their $\Lp$ cohomology vanishes for a delicate global reason, which is hard to make quantitative, on balls. Fortunately, their quotients $Z_{\mu}$ by $\Z^n$ are simpler. We can also say that the spaces $Z_{\mu}$ are hyperbolic spaces with ideal boundaries being products of circles supplied with power of the standard metric.

\subsection{Statement of theorems}

\begin{theorem}\label{lowerBoundPoincare}
Let $Z, Z'$ be two locally homogeneous hyperbolic metric spaces with metrics $dt^2 + \sum e^{2\mu_i t}dx_i^2$ and $dt^2 + \sum e^{2\mu_i' t}dx_i^2$ respectively, $0 < \mu_1 \le \mu_2 \le \ldots \le \mu_n$ and $0< \mu_1' \le \mu_2' \le \ldots \le \mu_n'$. Assume also that $\sum\mu_i/\mu_n > \sum\mu_i'/\mu_n'$. Suppose that there exist constants $a$ and $b$ such that for any $i$, $b \le \mu_i, \mu_i' \le a$. Then there exists a constant $G_0(a,b)$ such that the following holds.
Let $\Theta: B_Z(R) \to Z'$ be a continuous $(\lambda_1,\lambda_2,c_1,c_2)$-quasi-isometric embedding, inducing an isomorphism on fundamental groups. Suppose that $\Theta$ sends base point to base point, $\Theta(O) = O'$ and that $R \ge 8(\lambda_1+c_1)+(\lambda_2+c_2)+1$. If $p > \sum\mu_i'/\mu_n'$, up to  replacing $Z$ with a connected $2$-sheeted covering, the Poincar\'e constant $C_p(\mu)$ for a ball of radius $R$ in the space $Z$ is bounded from below by
$$C_p(\mu) \ge \left(G_0(a,b)\right)^{1/p}\left(\lambda_1+c_1\right)^{-3/p-2/p^2}e^{-(9/p+3/p^2)(\lambda_1+c_1)}e^{(\sum\mu_i/p)R}\left(p-\sum\mu_i'/\mu_n'\right)^{1/p}.$$
\end{theorem}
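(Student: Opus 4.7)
I would exhibit a test function on $Z'$ witnessing $p > \sum\mu_i'/\mu_n' = p_{\neq 0}(Z')$, transport it to $Z$ along $\Theta$ via the kernel machinery of the preceding sections, and read the Poincar\'e-constant lower bound off the ratio $\|v - m_v\|_p / \|\nabla v\|_p$ for the transported function $v$. The test function $g(x) = e^{\pi i x_n}$ is single-valued not on $Z'$ but on the double cover $\tilde Z' \to Z'$ associated with the index-$2$ subgroup $\Gamma' = 2\Z e_n \oplus \bigoplus_{i<n}\Z e_i$ of $\pi_1(Z') = \Z^n$. The assumed $\pi_1$-isomorphism pulls $\Gamma'$ back to an index-$2$ subgroup of $\pi_1(Z)$, giving a connected double cover $\tilde Z \to Z$ and a lift $\tilde\Theta : \tilde B \to \tilde Z'$ that is $(\lambda_1, 2c_1)$-coarse Lipschitz (the additive constant doubles because two $c_1$-close points in $Z$ may only be joined in $\tilde Z$ by a path of length $2c_1$). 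The hypothesis $R \ge 8(\lambda_1+c_1)+(\lambda_2+c_2)+1$ ensures the lifted balls are exactly the pre-images of $B_Z(O,R)$; bounding $C_p(\tilde Z)$ from below suffices since it controls $C_p(Z)$ up to a universal factor.

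\emph{Gradient bound for $v$.} On $\tilde Z'$, $|\nabla g|(t,x) = \pi e^{-\mu_n' t}$, so
$$\|\nabla g\|_{L^p(B_{\tilde Z'}(R))}^p \;\le\; \pi^p \,\mathrm{vol}(\T^n)\int_0^R e^{-(p\mu_n' - \sum\mu_i')t}\,dt \;\le\; \frac{C(a,b)}{p - \sum\mu_i'/\mu_n'},$$
finite exactly because $p > \sum\mu_i'/\mu_n'$. Define
$$v(x) := \int_{\tilde Z'} g(y)\,\phi(\tilde\Theta(x), y)\,dy$$
for a kernel $\phi$ on $\tilde Z'$ of width $R^\phi = O(1)$, and note that the cocycle $u(x,x') = v(x) - v(x')$ on $\tilde Z$ is precisely the transport convolution $(g(\cdot) - g(\cdot)) \ast_t \phi(\tilde\Theta)$. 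Combining Lemma~\ref{grad2cocycle}(2) on $\tilde Z'$, Lemma~\ref{transpCocycles} for the transport, and Lemma~\ref{grad2cocycle}(1) on $\tilde Z$ with an auxiliary regularising kernel yields
$$\|\nabla v\|_{L^p(\tilde B)} \;\le\; C''\,\|\nabla g\|_{L^p(B_{\tilde Z'}(R + O(\lambda_1+c_1)))},$$
where $C''$ combines the $e^{O(\lambda_1+c_1)}$ factor from Lemma~\ref{transpCocycles}, the inverse kernel margin $\sim e^{O(\lambda_1+c_1)}$ from Lemma~\ref{kernelWidth}, and the polynomial factor from $\sup\mathrm{vol}\,B_{\tilde Z}(2\lambda_2 R^\phi + c_2)$.

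\emph{Lower bound on $\|v - m_v\|_p$.} Since $\phi(\tilde\Theta(x), \cdot)$ is supported in a ball of radius $R^\phi$ and $|\nabla g| = \pi e^{-\mu_n' t}$, whenever $x$ lies in the ``bulk'' $\tilde B_0 \subset \tilde B$ where $t(\tilde\Theta(x))$ is bounded below by a constant, $|v(x) - g(\tilde\Theta(x))| \le \pi R^\phi e^{-\mu_n' t(\tilde\Theta(x))} \le 1/2$, so $|v(x)| \ge 1/2$; volume comparison gives $\mathrm{vol}(\tilde B_0) \gtrsim e^{(\sum\mu_i)R}$. Let $\sigma$ be the nontrivial deck involution of $\tilde Z \to Z$; by construction $\tilde\Theta \circ \sigma = \sigma' \circ \tilde\Theta$ where $\sigma'$ is the deck involution of $\tilde Z' \to Z'$, namely the shift $x_n \mapsto x_n + 1$. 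Choosing $\phi$ invariant under the isometry $\sigma'$ (always possible by averaging) forces $v \circ \sigma = -v$. Then for any $m \in \C$, using $\sigma$-invariance of $\tilde B_0$,
$$2\int_{\tilde B_0} |v - m|^p\,dx \;=\; \int_{\tilde B_0}\bigl(|v - m|^p + |{-v} - m|^p\bigr)\,dx \;\ge\; \int_{\tilde B_0}|v|^p\,dx \;\ge\; 2^{-p}\mathrm{vol}(\tilde B_0),$$
giving $\|v - m\|_{L^p}^p \gtrsim e^{(\sum\mu_i)R}$. Inserting into $\|v - m_v\|_p \le C_p(\tilde Z)\|\nabla v\|_p$ and substituting the gradient bound yields the stated estimate, the factor $(p - \sum\mu_i'/\mu_n')^{1/p}$ arising from the gradient bound and $e^{(\sum\mu_i/p)R}$ from the volume of $\tilde B_0$.

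\emph{Main obstacle.} The real work is tracking the exact exponents $-(9/p + 3/p^2)(\lambda_1 + c_1)$ and $-3/p - 2/p^2$: three nested kernel operations (regularising $g$, transport by $\tilde\Theta$, recovering $\nabla v$ from its cocycle) each contribute an $e^{O(\lambda_1+c_1)/p}$ factor via Lemma~\ref{kernelWidth} and a polynomial factor in volumes of $(\lambda_1+c_1)$-balls from Lemmas~\ref{grad2cocycle}--\ref{transpCocycles}, and these must be optimally combined in the widths $R^\phi, R^\psi$. The conceptual crux, however, is the double-cover trick forcing $v \circ \sigma = -v$; without it, $g$ could degenerate to a constant and transport to a constant, and the Poincar\'e lower bound would disappear.
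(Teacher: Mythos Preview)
Your overall architecture matches the paper's: pass to the double cover, use $u=e^{\pi i x_n}$ on $\tilde Z'$, transport and regularise to get a test function on $\tilde Z$, control the gradient via Lemmas~\ref{grad2cocycle} and~\ref{transpCocycles}, and bound the numerator from below using the anti-symmetry $v\circ\sigma=-v$. But there is a genuine gap in your ``lower bound on $\|v-m_v\|_p$'' paragraph, and it is exactly the step the paper works hardest on.

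The function you apply the Poincar\'e inequality to must be the one whose gradient you control. Lemma~\ref{grad2cocycle}(1) only bounds $\|\nabla(v\ast\psi')\|_p$, not $\|\nabla v\|_p$; your $v$ (the single transport-convolution through a possibly non-differentiable $\tilde\Theta$) need not be differentiable at all. So the actual test function is $w=v\ast\psi'$. Your pointwise estimate $|v(x)-g(\tilde\Theta(x))|\le\pi R^{\phi}e^{-\mu_n' t(\tilde\Theta(x))}$ is correct for $v$ but wrong for $w$: the extra convolution on $\tilde Z$ averages over a $\psi'$-ball, whose image under $\tilde\Theta$ has diameter $\sim\lambda_1+c_1$, so
\[
\bigl|w(\tilde z)-g(\tilde\Theta(\tilde z))\bigr|\ \lesssim\ (\lambda_1+c_1)\,e^{-\mu_n' t},
\]
and $|w|\ge\tfrac12$ requires $t(\tilde\Theta(\tilde z))$ to grow with $\lambda_1+c_1$, not merely to exceed a fixed constant. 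This is why the paper needs $\Theta(A)$ to land at distance $\ge R-(\lambda_1+c_1+\lambda_2+c_2)$ from $O'$, and why the hypothesis $R\ge 8(\lambda_1+c_1)+(\lambda_2+c_2)+1$ appears here (not, as you say, to control lifted balls).

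Accordingly, your ``volume comparison gives $\mathrm{vol}(\tilde B_0)\gtrsim e^{(\sum\mu_i)R}$'' is the heart of the matter and cannot be dismissed in a phrase. The paper proves it by an explicit packing: a lattice $J\subset\{t=R\}$ is partitioned into $e^{|\mu|r}$ translates $J_k$ (with $r=\lambda_2+c_2$) each of which is $2r$-separated; the lower QI bound makes the images of points of a single $J_k$ $2$-separated in $Z'$, so at most $e^{|\mu'|(R-d)}$ of them can land in $B(O',R-d)$. Summing over $k$ and taking $d=r+1$ shows at least half of $J$ maps outside $B(O',R-r-1)$, which gives the set $A$. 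Your proposal uses $(\lambda_2,c_2)$ nowhere; without it the pre-image of a region of bounded $t$ could in principle occupy most of $B_Z(R)$, and the numerator lower bound collapses.
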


This theorem is not symmetric, it can be applied only in one direction: it does not give any lower bound to the quasi-isometric embeddings of $Z_\mu$ to $Z_{\mu'}$ and of $Z_{\mu'}$ to $Z_{\mu}$ at the same time.

As we have already mentioned, we are able to treat the quantitative problem only for quasi-isometric embeddings which are homotopy equivalences. So we modify Definition \ref{DG} in the following way.

\begin{definition}\label{hDG}
Let $X, Y$ be metric spaces, $x_0, y_0$ their base points respectively. The homotopy quasi-isometric distortion growth is the function
\begin{eqnarray*}
D_{hG}(X,x_0,Y,y_0)(R) = \inf\{d|\exists f:B_X(x_0,R) \to Y \text{ a $(\lambda_f,c_f)$-quasi-isometric embedding}
\\\text{such that }
f(x_0)=y_0
\text{ and } f \text{ is a homotopy equivalence}, d=\lambda_f+c_f \}.
\end{eqnarray*}
\end{definition}

\begin{theorem}\label{lowerBound}
Let $Z, Z'$ be two locally homogeneous hyperbolic metric spaces with metrics $dt^2 + \sum e^{2\mu_i t}dx_i^2$ and $dt^2 + \sum e^{2\mu_i' t}dx_i^2$ respectively, $0 < \mu_1 \le \mu_2 \le \ldots \le \mu_n$ and $0< \mu_1' \le \mu_2' \le \ldots \le \mu_n'$. Assume also that $\sum\mu_i/\mu_n > \sum\mu_i'/\mu_n'$. Suppose that there exist constants $a$ and $b$ such that for any $i$ $b \le \mu_i, \mu_i' \le a$. Then there exist constants $G_1(a, b)$ and $G_2(a, b)$ such that the following holds.
The homotopy distortion growth (see Definition \ref{hDG}) for quasi-isometrical embedding of  $B_Z(R)$ into $Z'$ is bounded from below by
$$D_{hG} (R)\ge \min\left\{G_1\left(\frac{\sum\mu_i}{\mu_n} - \frac{\sum\mu_i'}{\mu_n'}\right)R - G_2,\frac18R\right\}.$$
\end{theorem}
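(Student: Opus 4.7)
The plan is to derive Theorem \ref{lowerBound} by confronting the lower bound on the Poincar\'e constant $C_p(\mu)$ of a ball of radius $R$ in $Z=Z_\mu$ coming from Theorem \ref{lowerBoundPoincare} with the upper bound coming from Theorem \ref{expPoincare}. The upper bound reads
$$C_p(\mu) \le \frac{p}{\sum\mu_i} + A(\mu)^{1/p} C_p(\T^n)\, e^{\mu_n R},$$
whose right-hand side is essentially $O(e^{\mu_n R})$ for large $R$, with implicit constants controlled by $a$ and $b$. Conversely, for any continuous $(\lambda,c)$-quasi-isometric embedding $\Theta:B_Z(R)\to Z'$ that is a homotopy equivalence and any $p>p_0:=\sum\mu_i'/\mu_n'$, Theorem \ref{lowerBoundPoincare} (applied to the double cover if necessary) gives
$$C_p(\mu) \ge G_0^{1/p}(\lambda+c)^{-3/p-2/p^2}\, e^{-(9/p+3/p^2)(\lambda+c)}\, e^{(\sum\mu_i/p)R}\,(p-p_0)^{1/p}.$$

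Set $p_1:=\sum\mu_i/\mu_n$. The hypothesis $p_1>p_0$ lets me pick a $p$ strictly between $p_0$ and $p_1$; for concreteness I take the midpoint $p=(p_0+p_1)/2$, which stays in a compact sub-interval of $[1,\infty)$ determined by $a,b$, and for which $\sum\mu_i/p-\mu_n = \mu_n(p_1-p)/p = \mu_n(p_1-p_0)/(2p)$. Comparing the two estimates and taking logarithms, the exponential factors give
$$\frac{\mu_n(p_1-p_0)}{2p}R \le \Big(\frac{9}{p}+\frac{3}{p^2}\Big)(\lambda+c) + O(\log(\lambda+c)) + O(1),$$
where all implicit constants depend only on $a,b$. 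Since $\mu_n \ge b$ and $p$ is bounded, this rearranges to $\lambda+c \ge G_1(a,b)(p_1-p_0)R - G_2(a,b)$, which is exactly the first term in the advertised minimum.

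The second alternative $R/8$ handles the regime excluded by Theorem \ref{lowerBoundPoincare}, whose hypothesis requires $R\ge 8(\lambda_1+c_1)+(\lambda_2+c_2)+1$. If this hypothesis fails, then $\lambda+c \ge R/8$ up to an absorbable additive constant, and the conclusion is automatic; otherwise the argument above applies. Taking the infimum over admissible embeddings converts $\lambda+c$ into $D_{hG}(R)$.

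The main obstacle is to verify that the sub-exponential corrections $(\lambda+c)^{-3/p-2/p^2}$ and $(p-p_0)^{1/p}$ in the lower bound, and the additive $p/\sum\mu_i$ term in the upper bound, can all be absorbed into $G_2$. This is a balancing problem in the choice of $p$: taking $p$ too close to $p_0$ kills the factor $(p-p_0)^{1/p}$, whereas taking $p$ too close to $p_1$ shrinks the coefficient $\mu_n(p_1-p)/p$ in front of $R$. The midpoint choice works because the interval length $p_1-p_0$ appears linearly both in the target rate and in the distance from $p$ to either endpoint. What requires care is tracking how the constants from Lemma \ref{transpCocycles}, Theorem \ref{expPoincare} (where $A(\mu)$ and $C_p(\T^n)$ enter), and the Croke/Bishop bounds depend on $a,b$, so that $G_1(a,b)$ and $G_2(a,b)$ genuinely depend only on these uniform parameters and not on the individual $\mu_i,\mu_i'$.
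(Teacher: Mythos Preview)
Your overall strategy is exactly the paper's: combine the upper bound of Theorem \ref{expPoincare} with the lower bound of Theorem \ref{lowerBoundPoincare}, take logarithms, and read off a linear-in-$R$ lower bound for $Q=\lambda+c$, with the $R/8$ branch coming from the threshold hypothesis $R\ge 8(\lambda_1+c_1)+(\lambda_2+c_2)+1$. That part is fine.

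The one substantive difference is your choice of $p$, and this is where a genuine gap appears. With $p=(p_0+p_1)/2$, the factor $(p-p_0)^{1/p}$ in Theorem \ref{lowerBoundPoincare} contributes, after taking logarithms, an additive term $\tfrac{1}{p}\log\!\big((p_1-p_0)/2\big)$. Your displayed inequality drops this term into the ``$O(1)$'', and your justification (``the interval length $p_1-p_0$ appears linearly both in the target rate and in the distance from $p$ to either endpoint'') does not rescue it: the correction enters as $\log(1/(p_1-p_0))$, not linearly, and $p_1-p_0$ is \emph{not} bounded below by anything depending only on $a,b$. Concretely, your argument yields
\[
Q \;\ge\; G_1(a,b)\,(p_1-p_0)R \;-\; G_3(a,b)\,\log\!\frac{1}{p_1-p_0} \;-\; G_2'(a,b),
\]
and there is no choice of constants $G_1,G_2$ depending only on $a,b$ for which this implies $Q\ge G_1(p_1-p_0)R-G_2$ uniformly over all admissible $\mu,\mu'$ (take $p_1-p_0$ tiny with $(p_1-p_0)R$ moderate). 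So the theorem as stated does not follow from the midpoint choice.

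The paper instead takes $p=p_0+1/R$, an $R$-dependent choice. Then $(p-p_0)^{1/p}=(1/R)^{1/p}$ and the troublesome term becomes $-\tfrac{1}{p}\log R$, which is sublinear in $R$ and independent of $p_1-p_0$; the exponential rate becomes $\mu_n\big(p_1-p_0-1/R\big)/(p_0+1/R)$, still producing the coefficient $\sim (p_1-p_0)$ in front of $R$. This is precisely what lets one keep $G_2$ a function of $a,b$ alone while retaining the full linear rate $G_1(p_1-p_0)R$. If you replace your midpoint by $p=p_0+1/R$ and rerun your computation, the rest of your write-up goes through essentially verbatim.
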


Theorem \ref{lowerBoundPoincare} plays an important role in the proof of Theorem \ref{lowerBound}. Before proving these two theorems, we will discuss the double cover of the family of spaces under consideration and we will give some preliminary lemmas.

\subsection{Lifting to a double covering space}
Let us introduce a double covering of $Z'$. Let $\tilde{Z}' = \R^{n-1}/\Z^{n-1} \times \R/2\Z \times [0,+\infty)$ with the metric defined by the same formula as for $Z'$: $dt^2 + \sum e^{2\mu_it}dx_i^2$. Consider the map $\tilde{Z}' \to Z'$ defined by
$$(x_1, x_2, \ldots, x_n, t) \mapsto (x_1, x_2, \ldots, x_n \mod 1, t).$$
So we identify $(x_1, x_2, \ldots, x_n, t)$ and $(x_1, x_2, \ldots, x_n+1, t)$ in $\tilde{Z}'$. Consider a complex function $u(x_1, x_2, \ldots, x_n, t) = e^{\pi i x_n}$ on $\tilde{Z}'$.

Composition of $u$ with the deck transformation $\iota': \tilde{Z}' \to \tilde{Z}'$
$$\iota': (x_1, x_2, \ldots, x_n, t) \mapsto (x_1, x_2, \ldots, x_n+1, t)$$
gives $u\circ\iota' = -u$.

By assumption, $\Theta: Z\to Z'$ is a continuous map inducing an isomorphism on fundamental groups, and we have $\tilde Z'$ which is a covering space of $Z'$. We need to show that there exists a non-trivial covering space $\tilde Z \to Z$ such that the following diagram commutes.
$$\begin{array}{rcl} \tilde Z & \xrightarrow{\tilde\Theta} & \tilde Z'\\
\pi_Z\downarrow &  & \downarrow\pi_{Z'}\\
Z & \xrightarrow{\Theta} & Z'\end{array}$$
Define
$$\tilde Z = \left\{(z, \tilde z')|z\in Z, \tilde z'\in\pi_{Z'}^{-1}(\Theta(z))\right\},$$
that is $\tilde Z\subset Z\times\tilde Z'.$ Let $[\gamma']$ be a loop in $Z'$ which does not lift to a loop in $\tilde Z'$. By hypothesis, there exists a loop $\gamma$ in $Z$ such that $\Theta(\gamma)$ is homotopic to $\gamma'$. Then $\gamma$ does not lift to a loop in $\tilde Z$. There exists an isometry $\iota$ of order $2$ on $\tilde Z$ such that $\tilde\Theta\circ\iota = \iota'\circ\tilde\Theta$.

\subsection{Lifting of $\Theta$}

Here we will prove that in the constructed double coverings $\Theta$ lifts to a map satisfying the right-hand inequality in the definition of quasi-isometry with constants $\lambda_1$ and $2c_1$. We need two preliminary lemmas concerning distances in two-fold coverings.

\begin{lemma}\label{curv}
Let $Z=Z_\mu$ be a locally homogeneous space. There is an effective constant $c_0(\mu)$ with the following effect. Let $z$ be a point in $Z$ in the region where $t\geq c_0$. Let $c=t(z)$. Every loop of length less than $c$ based at $z$ is null-homotopic.
\end{lemma}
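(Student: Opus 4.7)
The plan is to pass to the universal cover. Let $\tilde Z=\R^n\times\R$ carry the lifted metric $dt^2+\sum_i e^{2\mu_i t}dx_i^2$; then $\pi:\tilde Z\to Z_\mu$ is a normal Riemannian covering with deck group $\pi_1(Z_\mu)=\Z^n$ acting by integer translations on the $\R^n$-factor. Fix a lift $\tilde z=(\bar x,c)$ of $z$. Every loop at $z$ lifts to a path in $\tilde Z$ from $\tilde z$ to $\tilde z+k$ for a unique $k\in\Z^n$, and the loop is null-homotopic iff $k=0$. Hence it suffices to exhibit an effective constant $c_0(\mu)$ such that $c\geq c_0$ forces
$$d_{\tilde Z}(\tilde z,\tilde z+k)>c\quad\text{for every }k\in\Z^n\setminus\{0\}.$$

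Next I reduce to a hyperbolic plane via a totally geodesic slice. For each $j\in\{1,\dots,n\}$, the sign flip $x_i\mapsto -x_i$ ($i\neq j$) is an isometric involution of $\tilde Z$, so its fixed-point set $S_j:=\{x_i=0\text{ for }i\neq j\}$ is totally geodesic. The restricted metric $dt^2+e^{2\mu_j t}dx_j^2$ is a hyperbolic plane of curvature $-\mu_j^2$. Given $k\in\Z^n\setminus\{0\}$ pick an index $j$ with $|k_j|\geq 1$, and let $\sigma=(x_1,\dots,x_n,t)$ be any rectifiable path in $\tilde Z$ from $\tilde z$ to $\tilde z+k$. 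Dropping the non-negative terms $e^{2\mu_i t}(x_i')^2$ for $i\neq j$ from under the integrand shows that the projected path $\bar\sigma=(x_j,t)$ in $S_j$ satisfies $L_{S_j}(\bar\sigma)\leq L_{\tilde Z}(\sigma)$; since $\bar\sigma$ joins two points of $S_j$ at common height $c$ with horizontal shift $|k_j|\geq 1$, translation invariance in $x_j$ and monotonicity of hyperbolic distance in shift at fixed height give
$$d_{\tilde Z}(\tilde z,\tilde z+k)\;\geq\;d_{S_j}\bigl((0,c),(1,c)\bigr).$$

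Finally, the substitution $y=\mu_j^{-1}e^{-\mu_j t}$ identifies $S_j$ isometrically with $\mu_j^{-1}\cdot\HH^2$ in the Poincar\'e half-plane model, so the standard same-height distance formula yields
$$d_{S_j}\bigl((0,c),(1,c)\bigr)=\tfrac{1}{\mu_j}\cosh^{-1}\!\bigl(1+\tfrac12\mu_j^2 e^{2\mu_j c}\bigr)\;\geq\;2c-\tfrac{1}{\mu_j}\log\tfrac{2}{\mu_j^2},$$
the inequality following from $\cosh^{-1}(1+u)\geq\log u$ whenever $u\geq 1$. Setting $c_0(\mu):=\max_j\max\!\bigl\{1,\,\mu_j^{-1}\log(2/\mu_j^2)\bigr\}$ makes this lower bound strictly exceed $c$ for every $j$ and every $c\geq c_0$, so the minimum over non-trivial $k$ indeed exceeds $c$, proving the lemma. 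I expect no serious obstacle; the only step requiring slight care is the reduction from arbitrary $k\in\Z^n\setminus\{0\}$ to a unit displacement along some axis, which the totally-geodesic slice projection dispatches cleanly.
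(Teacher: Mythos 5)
Your proof is correct but takes a genuinely different route from the paper's. The paper stays in $Z$ itself and projects onto the torus fibre $T^n\times\{s\}$ via the (length-nonincreasing on $\{t\geq s\}$) map $\pi_s$, observing that on a fibre $T^n\times\{t\}$ it contracts lengths by at least $e^{\mu_1(s-t)}$; a short non-null-homotopic loop at height $c$ then pushes down to a non-null-homotopic loop in $T^n\times\{0\}$ of length $\leq c\,e^{-\mu_1 c/2}$, contradicting the systole bound $1$ on the flat torus once $c$ is large. You instead pass to the universal cover, reformulate null-homotopy as the deck-displacement condition $d_{\tilde Z}(\tilde z,\tilde z+k)\geq c$ for $k\in\Z^n\setminus\{0\}$, and bound that displacement from below by dropping to a totally geodesic slice $S_j$ isometric to a rescaled $\HH^2$, where the same-height distance is explicit. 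Both arguments ultimately hinge on the exponential divergence in the $\mu_1$-direction (your $\max_j\mu_j^{-1}\log(2/\mu_j^2)$ is governed by the smallest $\mu_j$, just as the paper's bound is governed by $\mu_1$). What your route buys is a fully explicit $c_0(\mu)$ and a clean geometric picture via totally geodesic hyperbolic planes; what the paper's route buys is brevity and avoidance of any cover or explicit distance formula, needing only that projection contracts and that the flat torus has systole $1$.

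One small point to tidy: the minorization $d_{S_j}\bigl((0,c),(1,c)\bigr)\geq 2c-\mu_j^{-1}\log(2/\mu_j^2)$ requires the argument of $\cosh^{-1}$ to satisfy $u=\tfrac12\mu_j^2 e^{2\mu_j c}\geq 1$, which your choice $c\geq c_0\geq\mu_j^{-1}\log(2/\mu_j^2)\geq(2\mu_j)^{-1}\log(2/\mu_j^2)$ does guarantee when the latter is positive (and it is automatic when it is not), so the proof closes; it is worth saying so explicitly. Also, you only get $d\geq c$ rather than $d>c$ when $c=c_0$, but $\geq c$ is exactly what the statement (``length \emph{less than} $c$'') needs, so this is harmless.
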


\begin{proof}
Let $\pi_s:Z\to T^n\times\{s\}\subset Z$ denotes projection onto the first factor. This is a homotopy equivalence. Note that $\pi_s$ is length decreasing on $\{(t,x)\in Z\,;\,t\geq s\}$. Moreover, on $T^n\times\{t\}$, $\pi_s$ decreases length by $e^{\mu_1(s-t)}$ at least.
Let $\gamma$ be a non null-homotopic geodesic loop at $z$. Assume that its length is $\leq 2c$. Then $\gamma\subset \{(t,x)\in Z\,;\,t\geq \frac{c}{2}\}$, therefore
\begin{eqnarray*}
\mathrm{length}(\pi_{\frac{c}{2}}(\gamma))\leq c,
\end{eqnarray*}
thus
\begin{eqnarray*}
\mathrm{length}(\pi_{0}(\gamma))\leq c \,e^{-\mu_1\frac{c}{2}}.
\end{eqnarray*}
Since $\pi_{0}(\gamma)$ is not null-homotopic, its length is at least 1, and this shows that
\begin{eqnarray*}
c\geq e^{\mu_1\frac{c}{2}}.
\end{eqnarray*}
This can happen only for $c\leq c_0(\mu_1)$.
\end{proof}

\begin{lemma}
Let $z_1, z_2$ be two points in $Z$ such that $d(O', \Theta(z_1)) > c_1$ or $d(O', \Theta(z_2)) > c_1$ and $d(z_1, z_2) \le c_1/\lambda_1$. Then $d(\tilde\Theta(\tilde z_1),\tilde\Theta(\tilde z_2)) = d(\Theta(z_1),\Theta(z_2))$.
\end{lemma}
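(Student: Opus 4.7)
The plan is to lift the minimizing $Z'$-geodesic from $\Theta(z_1)$ to $\Theta(z_2)$, do a case analysis on where it lands in $\tilde Z'$, and rule out the wrong sheet by a contractibility argument based on Lemma~\ref{curv}. Without loss of generality the hypothesis is $d(O',\Theta(z_1))>c_1$, since the two possibilities are interchangeable up to swapping indices.

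First, the right-hand quasi-isometric inequality together with $d(z_1,z_2)\le c_1/\lambda_1$ gives $d(\Theta(z_1),\Theta(z_2))\le 2c_1$. Let $\gamma$ be a minimizing geodesic in $Z'$ from $\Theta(z_1)$ to $\Theta(z_2)$, of length at most $2c_1$, and let $\tilde\gamma$ be its lift to $\tilde Z'$ starting at $\tilde\Theta(\tilde z_1)$. Since $\pi_{Z'}$ is a local isometry, $\tilde\gamma$ has the same length as $\gamma$ and its endpoint lies over $\Theta(z_2)$, hence equals either $\tilde\Theta(\tilde z_2)$ or $\iota'\tilde\Theta(\tilde z_2)$. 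In the former ``good'' case,
\[
d_{\tilde Z'}(\tilde\Theta(\tilde z_1),\tilde\Theta(\tilde z_2))\le\mathrm{length}(\tilde\gamma)=d_{Z'}(\Theta(z_1),\Theta(z_2)),
\]
and the reverse inequality, which holds because the projection does not increase distances, gives the desired equality.

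To exclude the ``bad'' case, I would pick any path $\alpha$ in $Z$ from $z_1$ to $z_2$ of length at most $c_1/\lambda_1$, lift it to $\tilde\alpha$ in $\tilde Z$ from $\tilde z_1$ to $\tilde z_2$, and consider the continuous image $\tilde\Theta\circ\tilde\alpha$, which is a path in $\tilde Z'$ from $\tilde\Theta(\tilde z_1)$ to $\tilde\Theta(\tilde z_2)$ projecting to $\Theta(\alpha)$. In the bad case, $\gamma\cdot\Theta(\alpha)^{-1}$ would then be a non-null-homotopic loop at $\Theta(z_1)$ in $Z'$, since its lift from $\tilde\Theta(\tilde z_1)$ ends at $\iota'\tilde\Theta(\tilde z_1)\neq\tilde\Theta(\tilde z_1)$. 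Applying the right-hand quasi-isometric inequality to initial subpaths of $\alpha$ shows that $\Theta(\alpha)\subset\overline B(\Theta(z_1),2c_1)$, and of course $\gamma$ sits in this ball as well, so the whole loop lies inside $\overline B(\Theta(z_1),2c_1)$. The hypothesis $d(O',\Theta(z_1))>c_1$, combined with the fact that $t$ is $1$-Lipschitz and that $d(O',\cdot)-t$ is bounded by the diameter of $T^n$ on $Z'$, gives a lower bound on $t(\Theta(z_1))$; absorbing the constants $c_0(\mu')$ of Lemma~\ref{curv} and the diameter of $T^n$ into the threshold, this puts $B(\Theta(z_1),2c_1)$ inside the injectivity radius of the covering $X'\to Z'$ at $\Theta(z_1)$, which Lemma~\ref{curv} bounds below by $t(\Theta(z_1))/2$. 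The ball is therefore simply connected, every loop inside it is null-homotopic in $Z'$, and the bad case is impossible.

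The main obstacle is that continuity of the quasi-isometric embedding $\Theta$ gives no Lipschitz estimate on $\mathrm{length}(\Theta(\alpha))$, so one cannot pit that length directly against the length threshold $t(\Theta(z_1))$ furnished by Lemma~\ref{curv}. The workaround is to argue topologically rather than metrically: the containment of $\gamma\cdot\Theta(\alpha)^{-1}$ in a contractible ball around $\Theta(z_1)$ forces null-homotopy purely for topological reasons, and the role of the hypothesis $d(O',\Theta(z_i))>c_1$ is precisely to ensure that this ball lies inside the injectivity radius region.
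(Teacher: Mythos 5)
Your proposal takes a genuinely different route from the paper's. The paper runs a clopen/connectedness argument: it fixes the ball $W\subset\tilde Z$ around $\tilde z_1$, splits it as $W=U\cup V$ according to whether the lifted distance is realized by $\tilde\Theta(\tilde z_2)$ or by $\iota'\tilde\Theta(\tilde z_2)$, and kills $U\cap V$ by observing that a point in the intersection produces \emph{two} geodesics in $Z'$ from $\Theta(z_1)$ to $\Theta(z_2)$, each of length $\le 2c_1$, whose concatenation is a non-null-homotopic loop of length $\le 4c_1$; Lemma~\ref{curv} then applies verbatim, since the loop length is directly controlled. Since $U$ and $V$ are closed, disjoint, cover the connected set $W$, and $U\ne\emptyset$, one gets $U=W$. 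Your proposal instead lifts a single minimizing geodesic $\gamma$ and runs a dichotomy on where the lifted endpoint lands, building the loop $\gamma\cdot\Theta(\alpha)^{-1}$ in the bad case. The payoff of the paper's version is that the loop it builds has length $\le 4c_1$ by construction, so Lemma~\ref{curv} is immediately applicable; your loop involves $\Theta(\alpha)$ whose length is uncontrolled, which is exactly why you had to shift to a topological argument.

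That shift is where the extra work hides. You invoke Lemma~\ref{curv} as if it supplies a statement about the \emph{injectivity radius} of the universal covering and about simple-connectedness of the metric ball $\overline B(\Theta(z_1),2c_1)$. Lemma~\ref{curv} only says that short loops based at $z$ are null-homotopic. To get your conclusion one still needs the chain: Lemma~\ref{curv} gives a systole bound $\ge t(z)$ at $z$; that bound implies that the covering map $X'\to Z'$ is injective on $B_{X'}(\tilde z,r)$ for $r<t(z)/2$; and only then is $B_{Z'}(z,r)$ homeomorphic to a convex ball in the Hadamard manifold $X'$ and hence simply connected, so that every loop inside it is null-homotopic in $Z'$. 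Each link is standard for these spaces, but you should spell it out rather than treat the ball's simple-connectedness as a direct restatement of Lemma~\ref{curv}. The constant-absorption remark is fair: the literal threshold $d(O',\Theta(z_1))>c_1$ is too weak for either argument as written (the paper's loop of length $\le 4c_1$ also needs $t(\Theta(z_1))\gtrsim 4c_1$), so both proofs implicitly use a larger threshold; you flag this honestly, and it is a discrepancy shared with the source, not a defect unique to your route.

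Subject to those caveats, your argument is correct and reaches the same conclusion as the paper by a different mechanism. If you were to polish it, the main thing to add is the explicit derivation that the systole lower bound at all points of $\overline B(\Theta(z_1),2c_1)$ (not only at the center) forces the ball to lift injectively to $X'$, hence to be simply connected.
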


\begin{proof}
Let $\tilde z_1\in\tilde Z$ be such that $d(\tilde O,\tilde z_1) > c_1$. Set
\begin{eqnarray*}
W &=& \{\tilde z_2\in \tilde Z | , d(\tilde z_1,\tilde z_2) \le c_1\},
\\
U &=&\{\tilde z_2\in W | d(\tilde\Theta(\tilde z_1),\tilde\Theta(\tilde z_2)) = d(\Theta(z_1),\Theta(z_2))\} \subset W,
\\
V &=&\{\tilde z_2\in W | d(\tilde\Theta(\tilde z_1),\iota'\circ\tilde\Theta(\tilde z_2)) = d(\Theta(z_1),\Theta(z_2))\} \subset W.
\end{eqnarray*}
By construction, $W = U \cup V$. Let us show that the intersection of $U$ and $V$ is empty
$$U\cap V =\{\tilde z_2\in W | d(\tilde\Theta(\tilde z_1),\iota'\circ\tilde\Theta(\tilde z_2)) = d(\tilde\Theta(\tilde z_1),\tilde\Theta(\tilde z_2))\}.$$
If $\tilde z_2\in U\cap V$, then the geodesic segments connecting $\tilde\Theta(\tilde z_1)$ with $\tilde\Theta(\tilde z_2)$ and $\tilde\Theta(\tilde z_1)$ with $\iota'\circ\tilde\Theta(\tilde z_2)$ induce a loop $\gamma$ in $Z'$ of length $2d(\Theta(z_1),\Theta(z_2)) \le 2\left(\lambda_1(c_1/\lambda_1)+c_1\right) = 4c_1$ which is not homotopic to $0$. According to Lemma \ref{curv}, this is incompatible with the assumption that $d(O',\Theta(z_1)) > c_1$. Hence, $U\cap V$ is empty. Since $U$ is non-empty (it contains at least $\tilde z_1$) and closed in $W$, $V$ is closed in $W$ and $W$ is connected, we conclude that $U = W$, which finishes the proof.
\end{proof}

\begin{lemma}
A $(\lambda_1,\lambda_2,c_1,c_2)$-quasi-isometric embedding $\Theta: Z\to Z'$ lifts to a ``quasi-Lipschitz" map $\tilde\Theta:\tilde Z\to\tilde Z'$, that is, for any two points $\tilde z_1,\tilde z_2 \in \tilde Z$,
$$d(\tilde\Theta(\tilde z_1),\tilde\Theta(\tilde z_2)) \le \lambda_1d(\tilde z_1,\tilde z_2) + 2c_1.$$
\end{lemma}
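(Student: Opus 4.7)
The continuous lift $\tilde\Theta$ is immediate from the pullback description of $\tilde Z$: the projection $(z,\tilde z')\mapsto\tilde z'$ is continuous and makes the diagram commute by construction, so the substantive content of the lemma is the coarse-Lipschitz bound.

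My approach is a chain argument driven by the preceding lemma. Given $\tilde z_1,\tilde z_2\in\tilde Z$ with $L=d(\tilde z_1,\tilde z_2)$, pick a minimizing (or near minimizing) path of length $L$ in $\tilde Z$ between them and subdivide it into $N$ consecutive subarcs of lengths $\ell_i\le c_1/\lambda_1$ summing to $L$; label the endpoints $\tilde w_0=\tilde z_1,\ldots,\tilde w_N=\tilde z_2$ and write $w_i=\pi_Z(\tilde w_i)$. Since $\pi_Z$ is a local isometry, $d(w_i,w_{i+1})\le \ell_i\le c_1/\lambda_1$, which is exactly the scope of the preceding lemma; it yields the key identity
$$d(\tilde\Theta(\tilde w_i),\tilde\Theta(\tilde w_{i+1}))=d(\Theta(w_i),\Theta(w_{i+1}))\le \lambda_1\ell_i+c_1.$$
Summing via the triangle inequality in $\tilde Z'$ then gives $d(\tilde\Theta(\tilde z_1),\tilde\Theta(\tilde z_2))\le \lambda_1 L+Nc_1$, and the stated bound $\lambda_1 L+2c_1$ follows once one accounts for the additive constant carefully: for $L\le c_1/\lambda_1$ take $N=1$ and apply the preceding lemma directly to the pair $(\tilde z_1,\tilde z_2)$, which gives $\lambda_1 L+c_1$ at once; for larger $L$ choose the subdivision so that only a bounded number of endpoint additive contributions survive after absorption into the $\lambda_1 L$ term.

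The principal obstacle is the hypothesis of the preceding lemma that one of each consecutive pair satisfy $d(O',\Theta(\cdot))>c_1$, which fails for subarcs whose images sit entirely inside $B_{Z'}(O',c_1)$. In that central region the $\Theta$-image of the subarc has diameter at most $2c_1$, and Lemma \ref{curv} together with $\Theta(O)=O'$ rules out a nontrivial monodromy being picked up while traversing it; hence the concatenated chain of lifts remains globally coherent and terminates at $\tilde\Theta(\tilde z_2)$ rather than at $\iota'(\tilde\Theta(\tilde z_2))$. Verifying this global coherence through the base-point region is, in my view, the delicate step of the argument; once it is in place the summation above gives the desired inequality.
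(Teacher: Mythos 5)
Your overall strategy — subdivide a geodesic into pieces of length at most $c_1/\lambda_1$, apply the preceding lemma piecewise, then sum via the triangle inequality — is exactly the paper's. But two concrete points need fixing.

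First, the bookkeeping. If the geodesic has length $L$ and is cut into $N$ pieces of length $\ell_i\le c_1/\lambda_1$, you need $N\gtrsim \lambda_1 L/c_1$, so $Nc_1\gtrsim\lambda_1 L$ and the sum $\lambda_1 L+Nc_1$ is of order $2\lambda_1 L$, not $\lambda_1 L+O(c_1)$. There is no choice of subdivision that makes ``only a bounded number of endpoint additive contributions survive'': every piece costs $c_1$ and the number of pieces is proportional to $L$. In fact the paper's own computation lands on $d(\tilde\Theta(\tilde z_1),\tilde\Theta(\tilde z_2))\le 2c_1+2\lambda_1 d(\tilde z_1,\tilde z_2)$, with the doubled Lipschitz constant, so the displayed constant in the lemma statement appears to be a slip, not something recoverable by a sharper chain.

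Second, the central region. You correctly notice that the preceding lemma cannot be applied when both endpoints of a piece map near $O'$, and you propose to handle this via Lemma \ref{curv} and a monodromy argument that you describe as delicate and do not complete. The paper's handling is simpler and self-contained: it lets $t_1$ (resp.\ $t_2$) be the first (resp.\ last) parameter on the geodesic $\tilde\gamma$ at which $\tilde\Theta\tilde\gamma(t)$ lies within $c_1$ of $\tilde O'$, observes that by this very definition $d(\tilde\Theta\tilde\gamma(t_1),\tilde\Theta\tilde\gamma(t_2))\le 2c_1$, and then chains only over the two outer arcs $[0,t_1]$ and $[t_2,L]$, where every interior point maps far from the basepoint and the preceding lemma applies to each consecutive pair. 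There is no monodromy to control because $\tilde\Theta$ is already a globally defined lift; you are not building it piece by piece, only estimating distances, and the middle arc is disposed of in one line. This is the ingredient you should adopt in place of the Lemma \ref{curv} detour, and with it (and the corrected constant $2\lambda_1$) the proof closes.
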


\begin{proof}
Let $\tilde\gamma \subset \tilde Z$ be a geodesic between $\tilde z_1$ and $\tilde z_2$. Let $t_1$ be the first point such that $d(\tilde\Theta\gamma(t), \tilde O') \le c_1$ and $t_2$ be the last point with such a property (if such points $t_1, t_2$ do not exist, then we can apply the following arguments directly to $d(\tilde\Theta(\tilde z_1),\tilde\Theta(\tilde z_2))$ instead of cutting the curve in three parts and considering $d(\tilde\Theta(\tilde z_1),\tilde\Theta\tilde\gamma(t_1))+d(\tilde\Theta(\tilde z_1),\tilde\Theta\tilde\gamma(t_2))$). Then
$$d(\tilde\Theta(\tilde z_1),\tilde\Theta(\tilde z_2)) \le
d(\tilde\Theta\tilde\gamma(t_1),\tilde\Theta\tilde\gamma(t_2))+
d(\tilde\Theta(\tilde z_1),\tilde\Theta\tilde\gamma(t_1))+
d(\tilde\Theta(\tilde z_1),\tilde\Theta\tilde\gamma(t_2)).$$
By definition of $t_1$ and $t_2$, $d(\tilde\Theta\tilde\gamma(t_1),\tilde\Theta\tilde\gamma(t_2)) \le 2c_1$. Now divide parts of $\gamma$ between $\tilde\Theta(\tilde z_1)$ and $\tilde\Theta\tilde\gamma(t_1)$ and between $\tilde\Theta(\tilde z_1)$ and $\tilde\Theta\tilde\gamma(t_2)$ by segments of length $c_1/\lambda_1$. We apply the previous lemma to them, so
$$d(\tilde\Theta(\tilde z_1),\tilde\Theta\tilde\gamma(t_1))+
d(\tilde\Theta(\tilde z_1),\tilde\Theta\tilde\gamma(t_2)) \le N\left(\lambda_1\frac{c_1}{\lambda_1}+c_1\right),$$
where $N \le d(\tilde z_1, \tilde z_2)/(c_1/\lambda_1)$ is the number of segments in the subdivision. So,
$$d(\tilde\Theta(\tilde z_1),\tilde\Theta(\tilde z_2)) \le 2c_1+2\lambda_1d(\tilde z_1,\tilde z_2).$$
\end{proof}

\subsection{Proof of Theorem \ref{lowerBoundPoincare} - Part 1}
Let $\psi'$ be a kernel on $\tilde Z$ which is invariant by isometry, that is, for any isometry $\iota$, $$\psi'(\iota(\tilde z_1), \iota(\tilde z_2)) = \psi'(\tilde z_1, \tilde z_2).$$
As an example of such a kernel we can consider a kernel depending only on the distance between points.
Let $\zeta$ be a kernel on $\tilde Z'$ which is also invariant by isometries. Define a complex function $v$ on $\tilde Z$ as follows
$$v(\cdot) = \left(\int _Y u(\tilde z')\zeta(\tilde \Theta(\tilde z),\tilde z')d\tilde z'\right)\ast\psi'(\cdot, \tilde z).$$% (\ast\zeta\circ_1\tilde\Theta)\ast\psi'.$$
We will write shortly for the integral
$$u\ast_t\zeta(\tilde \Theta)(\tilde z) = \int _Y u(\tilde z')\zeta(\tilde \Theta(\tilde z),\tilde z')d\tilde z'.$$
Then $v\circ\iota = -v$. Indeed,
$$v\circ\iota = \left(u\ast_t\zeta(\tilde \Theta)\right)\ast\psi'\circ\iota = \left(u\ast_t\zeta(\tilde \Theta)\circ\iota\right)\ast\psi'.$$
On the other hand, using both relations $\tilde\Theta\circ\iota = \iota'\circ\tilde\Theta$ and $(\iota')^2 = id$, we have
\begin{eqnarray}\nonumber
u\ast_t\zeta(\tilde \Theta)\circ\iota(\tilde z) = \int u(\tilde z')\zeta(\tilde \Theta(\iota \tilde z), \tilde z')d\tilde z' = \int u(\tilde z')\zeta(\iota'\tilde\Theta(\tilde z),(\iota')^2\tilde z')d\tilde z' =
\\\nonumber
= \int u(\tilde z')\zeta(\tilde\Theta(\tilde z),\iota' \tilde z')d\tilde z' = \int u(\iota'\tilde z')\zeta(\tilde\Theta(\tilde z),\tilde z')d\tilde z' = - u\ast_t\zeta(\tilde \Theta),
\end{eqnarray}
hence $v$ is skewsymmetric with respect to $\iota$. We get immediately that $\int v = 0$. Now we apply successively Lemma \ref{grad2cocycle} and Lemma \ref{transpCocycles}.

\emph{Step 1.} By Lemma \ref{grad2cocycle} there exists a kernel $\psi_1$ on $\tilde Z$ which is controlled by $a$ and $b$ and such that
$$\left(\int|\nabla(u\ast_t\zeta(\tilde\Theta)\ast\psi')|^p\right)^{1/p} \le N_{\psi_1}\left(u\ast_t\zeta(\tilde\Theta)\right),$$
where for $\psi_1$ we have the width of support is $R^{\psi_1} = R^{\psi'}$ and
$$\sup\psi_1 \le \frac{\sup\nabla\psi'\sup\psi'}{\inf_{z} Vol B(\tilde z, R^{\psi})}.$$

\emph{Step 2.} By Lemma \ref{transpCocycles} there exists a kernel $\zeta_1$ on $\tilde Z'$ such that
$$N_{\psi_1}\left(u\ast_t\zeta(\tilde\Theta)\right) \le \tilde CN_{\zeta_1}(u),$$
where the width of support of $\zeta_1$ is $2R^{\zeta}+\lambda_1R^{\psi'}+c_1$, the supremum of $\zeta_1$ is
$$\sup\zeta_1 = \frac{\sup\psi_1}{c_{\tau}^Y} e^{2R^\zeta+\lambda_1R^{\psi'}+c_1}(2\lambda_1R^\zeta+c_1)^2$$
and
$$\tilde C = \frac{1}{c_\tau^Y}(\sup\psi_1)^{3/p}e^{\left((2+\lambda_1)R^{\psi'}+c_1\right)/p}\left((2+\lambda_1)R^{\psi'}+c_1\right)^{2/p}.$$

\emph{Step 3.} 
Applying Lemma \ref{grad2cocycle} we get that there exists a kernel $\zeta_2$ on $\tilde Z'$ such that
$$N_{\zeta_2}(u) \le C(n)||\nabla u||_p,$$
we remind that the constant $C(n)$ depends only on the dimension of $\tilde Z'$ if the Ricci curvature is bounded from below, that is $\sup\mu_i$ is bounded.

\emph{Step 4.}
Here we merely need to pass from $N_{\zeta_1}$ to $N_{\zeta_2}$. We apply Lemma \ref{grad2cocycle} once more
$$N_{\zeta_1} \le \hat C N_{\zeta_2},$$
where
$$\hat C = \frac{\sup\zeta_1\sup\zeta_2}{c_\tau^Y} \frac{R^{\zeta_2}}{\varepsilon^{\zeta_2}} (2e)^{(2R^{\zeta}+\lambda_1R^{\psi'}+c_1)/\varepsilon^{\zeta_2}}.$$

Choose $\psi'$ and $\zeta$ such that $R^{\psi'}=1$ and $R^{\zeta}=1$. Then $\sup\psi'$ and $\sup\zeta$ are controlled by $a$ and $b$. We note also that $\varepsilon^{\zeta_2}=1$. So combining all inequalities we get
$$\int_{B(R)} |\nabla v|^p \le C_1(a, b)\left(\lambda_1+c_1\right)^{3+2/p}
e^{(9+3/p)(\lambda_1+c_1)} \int_{\T^n\times[0,+\infty]}|\nabla u|^p,$$
where $C_1(a,b)$ is a constant depending only on $a$, $b$ and dimension $n$. Let $Q = \lambda_1 + c_1$ and
$$C(Q) = \left(\lambda_1+c_1\right)^{3+2/p}e^{(9+3/p)(\lambda_1+c_1)}.$$

\subsection{Proof of Theorem \ref{lowerBoundPoincare}}

We will give a lower bound for the $\LL^p$-norm  of the function $v=(u\ast \phi) \ast \psi'$. Our aim is to prove that the absolute value of $v$ is nearly constant. For simplicity of notations we suppose first that the volume growth of $Z_{\mu}$ and $Z_{\mu'}$ is the same, that is $\sum\mu_i=\sum\mu'_i$. We will write $|\mu|$ and $|\mu'|$ for these sums respectively. We are going to show that there exists a subset $A$ of the ball $B(z_0, R)$ such that on the one hand the volume of $A$ is rather big, that is $Vol(A) \ge  Vol(B(z_0, R))/2$ and on the other hand its image lies rather far from the base point $\Theta(A) \cap B(z'_0, R - (\lambda_1+c_1+\lambda_2+c_2)) = \emptyset$.

Denote by $r = \lambda_2+c_2$. We will construct a finite subset $J$ in $B(z_0, R)\subset Z_{\mu}$ and a partition of $J$ into $e^{|\mu|r}$ subsets $\{J_k\}_{k=1,\ldots,n}$, each of cardinality $|J_k| = e^{|\mu|(R-r)}$ with the following property
\begin{itemize}
\item {\bf (P)} For any $k\in\{1,\ldots,n\}$ if $z_1$ and $z_2$ are points of $J_k$ then the open balls of radius $r$ centered at these points are disjoint.
\end{itemize}

So, let $z_1, z_2\in J_k$ be two different points. It follows from {\bf (P)} that
$$2r \le d(z_1,z_2) \le \lambda_2d(\Theta(z_1),\Theta(z_2))+c_2,$$
hence $d(\Theta(z_1),\Theta(z_2)) \ge 2$, so the balls $B(\Theta(z_1),1)$ and $B(\Theta(z_2),1)$ are disjoint. Fix some $d>0$ and denote by $J_k' \subset J_k$ the set of points whoes images are not farther than $R-d$ from $z'_0$ that is if $z \in J'_k$ then $d(z'_0,\Theta(z)) \le R-d$. We obtain
$$|J'_k| Vol(B(\Theta(z), 1)) \le Vol(B(z'_0,R-d+1)),$$
and we conclude that $|J'_k| \le e^{|\mu|(R-d)}.$ Denote the union of $J'_k$ by $J'$ then $|J'| \le e^{|\mu|(R-d+r)}$. Hence, whenever $d \ge r+1$,
$$\frac{|J'|}{|J|} \le e^{|\mu|(r-d)} \le \frac{1}{2}.$$
So, we choose $d = r+1$. Now let $A$ be the union of all $1$-balls centered at points of $J\setminus J'$, $A = \cup_{z\in J\setminus J'}B(z,1)$. The volume $Vol A \ge 1/2 Vol(B(z_0,R))$. By definition of $A$, for any point $z\in A$ there exists a point $z' \in J\setminus J'$ at most $1$-far away from $z$, $d(z,z') \le 1$. Applying triangle inequality we get $d(z'_0,\Theta(z)) \ge d(z'_0,\Theta(z')) - (\lambda_1 +c_1) \ge R - (\lambda_1+c_1+\lambda_2+c_2)$.

Here we describe the set  $J\subset\{R\}\times\R^n/\Z^n$ (we fix the first coordinate $t = R$). This is the set of points $z =(R, x_1,\ldots,x_n)$ such that for any $i=1,\ldots,n$, $x_i$ is an integer multiple of $e^{-\mu_iR}$ modulo $1$. $J_0$ is the subset of points such that for any $i$, $x_i$ is a whole multiple of $e^{\mu_i(r-R)}$. Let $K$ be the set of vectors $k=(0, k_1, \ldots, k_n)$ such that for any $i$ the number $e^{\mu_iR}k_i$ is an integer between $0$ and $e^{\mu_i(r-R)}-1$. For $k \in K$, we define $J_k = J_0 + k$. Then for any two different points $z_1, z_2$ of $J_k$,
$$d(z_1,z_2) =\max\log\left(|x^1_i-x^2_i|^{1/\mu_i}\right)\ge r.$$
We constructed the needed set. Now we notice that the lifting $\tilde A \subset \tilde Z$ of $A$ has the same properties relatively to $\tilde\Theta$: the image $\tilde\Theta(\tilde A)$ lies at distance at least $R - (\lambda_1+c_1+\lambda_2+c_2)$ from the base point and the volume of $\tilde A$ is at least a half of the volume of the ball $B(\tilde z_0, R)$.

Now let us compute $|v(\tilde z)|$ for $\tilde z \in \tilde A$ (in fact here we will give an upper bound for $|v|$ which is true for all $\tilde z \in B(z_0,R)$ and a lower bound for $\tilde z \in \tilde A$). We remind that by construction, $\tilde z$ is sent far from the base point, $d(\tilde z'_0, \tilde\Theta(\tilde z)) \ge R - (\lambda_1+c_1+\lambda_2+c_2)$.
\begin{eqnarray}\nonumber
|(u\ast \phi) \ast \psi'(\tilde z)| &=& \left|\int_X\int_Y u(\tilde z')\zeta(\tilde \Theta(\tilde z_1),\tilde z')\psi'(\tilde z,\tilde z_1)d\tilde z'd\tilde z_1\right|
\\\nonumber
&\ge& \left|\int_X\int_Y (u(\tilde z')-u(\tilde \Theta(\tilde z)) + u(\tilde \Theta(\tilde z)))\zeta(\tilde \Theta(\tilde z_1),\tilde z')\psi'(\tilde z,\tilde z_1)d\tilde z'd\tilde z_1\right| 
\\\nonumber
&\ge& \left|\int_X\int_Y (u(\tilde \Theta(\tilde z)))\zeta(\tilde \Theta(\tilde z_1),\tilde z')\psi'(\tilde z,\tilde z_1)d\tilde z'd\tilde z_1\right|
\\\nonumber
&&- \left|\int_X\int_Y (u(\tilde z')-u(\tilde \Theta(\tilde z)))\zeta(\tilde \Theta(\tilde z_1),\tilde z')\psi'(\tilde z,\tilde z_1)d\tilde z'd\tilde z_1\right| 
\\\label{integral1}
&\ge& 1 - \int_X\int_Y |u(\tilde z')-u(\tilde \Theta(\tilde z))|\zeta(\tilde \Theta(\tilde z_1),\tilde z')\psi'(\tilde z,\tilde z_1)d\tilde z'd\tilde z_1.
\end{eqnarray}
For the last inequality we shall use the following facts: $|u| = 1$ and the integral of a kernel over the second argument is equal to $1$.
\begin{eqnarray*}
&&\left|\int_X\int_Y u(\tilde \Theta(\tilde z))\zeta(\tilde \Theta(\tilde z_1),\tilde z')\psi'(\tilde z,\tilde z_1)d\tilde z'd\tilde z_1\right| 
\\
&=& \left|\int_X u(\tilde \Theta(\tilde z))\psi'(\tilde z,\tilde z_1)\left(\int_Y\zeta(\tilde \Theta(\tilde z_1),\tilde z')d\tilde z'\right)d\tilde z_1\right|
\\
&=& \left|\int_X u(\tilde \Theta(\tilde z))\psi'(\tilde z,\tilde z_1)d\tilde z_1\right| = \left|u(\tilde \Theta(\tilde z))\right| = 1.
\end{eqnarray*}

We need to estimate the double integral in Eq. (\ref{integral1}). $\psi'(\tilde z,\tilde z_1)$ is non-zero if $d(\tilde z,\tilde z_1) \le R^{\psi'} = 1$ and $\zeta(\tilde \Theta(\tilde z_1),\tilde z')$ is non-zero if $d(\tilde z',\tilde \Theta(z_1)) \le R^{\zeta} = 1$. So the diameter of the set $\hat S$ of points $\tilde z'$ such that the integrand is non-zero, is at most $2\lambda_1+c_1+2\le4(\lambda_1+c_1)$ because $\lambda_1 \ge 1$. Hence $\hat S$ is contained in a ball $B_{\hat S}$ of radius $4(\lambda_1+c_1)$. Assume $\hat z' = \tilde \Theta(\tilde z) \in \hat S$. Then by the mean value theorem, for any point $\tilde z'\in \hat S$,
\begin{eqnarray}\nonumber
|u(\tilde z') - u(\hat z')| \le |\tilde z'-\hat z'|\sup_{\tilde z'\in B_{\hat S}}|\nabla u(\tilde z')|\le 8(\lambda_1+c_1)\sup_{\tilde z'\in B_{\hat S}}\left|\frac{\partial u}{\partial \tilde x_n}\right|e^{-\mu_n't} \le 8\pi(\lambda_1+c_1)e^{-\mu'_nt}
\\\nonumber
\le8\pi(\lambda_1+c_1)\sup_{\hat z'\in B_{\hat S}}e^{-\mu'_nd(O',\hat z')}\le 8\pi(\lambda_1+c_1)e^{-\mu'_n\left(R - (\lambda_1+c_1+\lambda_2+c_2) - 2(\lambda_1+c_1)\right)} \le \frac12
\end{eqnarray}
for $R \ge 8(\lambda_1+c_1)+(\lambda_2+c_2) = R_0$.
Hence we have proved that
\begin{eqnarray*}
\frac12 \le |(u\ast \phi) \ast \psi'(\tilde z)|&if& \tilde z \in \tilde A
\\
|(u\ast \phi) \ast \psi'(\tilde z)| \le 1&if& \tilde z \in B(\tilde z_0, R).
\end{eqnarray*}
And we conclude from this relation that for $R \ge R_0+1$,
$$\int_{B(R)}|v|^p \ge \frac{1}{2^p}Vol(B(R)) - Vol(B(R_0)) \ge e^{(\sum\mu_i)R}/2^{p+1}.$$
Let us compute the integral $\int |\nabla u|^p$.
$$\int |\nabla u|^p = \int \left|\frac{\partial u}{\partial x_n}\right|^pe^{-\mu_n'pt}e^{\left(\sum\mu_i'\right)t}dtdx_n = \pi \int_0^{+\infty}e^{\left(\sum\mu_i' - p\mu_n'\right)t}dt = \frac{\mu_n'\pi}{-\sum\mu_i'/\mu_n'+p}.$$

Hence the Poincar\'e constant $C_p(\mu)$ for $Z$ satisfies
\begin{eqnarray*}
(C_p(\mu))^p &\ge& \frac{||v||^p}{||\nabla v||^p} \ge \frac{||v||^p}{C_1(a,b)C(Q)||\nabla u||^p}\\
&\ge& \left(\mu_n'\pi2^{p+1} C_1(a,b)C(Q)\right)^{-1}e^{(\sum\mu_i)R}(p-\sum\mu_i'/\mu_n').
\end{eqnarray*}
This proves the claim in Theorem \ref{lowerBoundPoincare}.

\subsection{Proof of Theorem \ref{lowerBound}}

Let $\Theta:B_Z(R)\to Z'$ be a $(\lambda_1,\lambda_2,c_1,c_2)$-quasi-isometric embedding. By hypothesis, $\Theta$ is isomorphic on fundamental groups. Lemma \ref{curv} implies that $\Theta$ moves the origin a bounded distance away. Indeed, a non null-homotopic loop of length $1$ based at $O$ is mapped to a non null-homotopic loop of length $\le Q=\lambda_1+c_1$ based at $\Theta(O)$. This implies that $t(\Theta(O))\leq 4Q$ and $d(O',\Theta(O))\leq 4Q+1$.

The space $\tilde Z$ is of the form $\tilde T\times\R$ where $\tilde T \to T$ is a connected $2$-sheeted covering space of torus, that is $\tilde T$ is also a torus. Hence we can apply Theorem \ref{expPoincare}. We have $C_p(\mu) \le C_2(a, b)e^{\mu_n R}$. If $R \le 8(\lambda_1+c_1)+(\lambda_2+c_2)$ there is nothing to prove. Otherwise we arrive at
$$\left(\mu_n'\pi2^{p+1} C_1(a,b)C(Q)\right)^{-1/p}e^{(\sum\mu_i/p)R}\left(p-\sum\mu_i'/\mu_n'\right)^{1/p} \le C_2(a, b)e^{\mu_n R}.$$
Hence with $C_3(a, b) = (\mu_n'\pi2^{p+1}C_1(a,b))^{1/p}C_2(a, b)$,
$$C_3(a, b)C(Q) \ge e^{(\sum\mu_i/p - \mu_n)R}\left(p - \frac{\sum\mu_i'}{\mu_n'}\right)^{1/p}.$$
We have calculated that $C(Q) = Q^{3+2/p}e^{(9+3/p)Q}$. Combining these results and taking the logarithm (note that in the following calculations every constant depending on $\mu$ and $\mu'$ can be estimated using $a$ and $b$), we get
$$\left(3+\frac2p\right)\log Q + \left(9+\frac3p\right)Q \ge G'(a,b)+\left(\frac{\sum\mu_i}{p}-\mu_n\right)R+\frac1p\log\left(p-\frac{\sum\mu'_i}{\mu'_n}\right)$$
with some constant $G'$ depending only on $a,b$. $p\ge 1$ hence  the left-hand size can be estimated as $5\log Q + 12Q < 24 Q$. Setting $p = \sum \mu_i'/\mu_n' + 1/R$, we get
$$24 Q \ge G'(a,b)+\frac{\mu_n\left(\frac{\sum\mu_i}{\mu_n}-\frac{\sum\mu'_i}{\mu'_n}-\frac{1}{R}\right)R}{\frac{\sum\mu'_i}{\mu'_n}+\frac{1}{R}} + \frac1p\log\frac1R.$$
For $R \ge G''(a, b)$ with some well-chosen constant $G''$,
$$24 Q \ge G'(a,b)+\frac{\mu_n\mu'_n}{4\sum\mu'_i}\left(\frac{\sum\mu_i}{\mu_n}-\frac{\sum\mu'_i}{\mu'_n}\right)R - \frac{\mu'_n}{2\sum\mu'_i}\log R,$$
and finally we can rewrite our inequality under the desired form
$$Q \ge G_1(a, b)\left(\frac{\sum\mu_i}{\mu_n} - \frac{\sum\mu_n'}{\mu_n'}\right)R - G_2(a,b)$$
with $G_1(a, b)$ and $G_2(a, b)$ being constants depending only on $a$ and $b$.

This finishes the proof of Theorem \ref{lowerBound}.

\section{Quasi-isometric distortion for regular trees}

In this section, we prove that embedding hyperbolic balls into trees requires linear distorsion growth.

First we need coarse notions of volume and of separation (minimal volume of subsets dividing a metric space $X$ into two pieces). 

\begin{definition}
Let $a > 0$. We will call the $a$-volume of a metric space $X$ the following quantity
$$Vol_a(X) = \sup\left\{v\big|\text{for any family }B_j\text{ of balls of radius }a\text{ covering }X: \#\{B_j\}\ge v\right\}.$$
\end{definition}

\begin{definition}
Let $a > 0$. We call $a$-separation of $X$ the number
\begin{eqnarray*}
sep_a(X) &=& \sup\big\{N \big| \text{for any partition } X = U_1 \sqcup U_2\text{ such that }Vol_a(U_i)\ge Vol_a(X)/3,
\\ &&i=1,2,\, \text{for any family } B_j \text{ of pairwise disjoint balls of radius } a,
\\ &&\#{\text{balls intersecting both } U_1 \text{ and } U_2}\ge N\big\}
\end{eqnarray*}
\end{definition}

\begin{theorem}\label{linearTree}
Let $X$ be a bounded metric space, and $T$ be a tree of degree at most $d$. $S = sep_a(X)$ and $V = Vol_a(X)$. Suppose that for any subset $Y$ of $X$ of $a$-volume at least one third of $V$, the diameter of $Y$ is at least $diam(X)/D$ for some constant $D$ depending only on $X$. If $f:X\to T$ is a $(\lambda, c)$-quasi-isometric embedding then
\begin{itemize}
  \item either $diam (X) \le cD$,
  \item or
$$\lambda 2a + c \ge \log_d\frac{S}{Vol_a(B(c))}.$$
\end{itemize}
\end{theorem}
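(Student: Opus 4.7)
The strategy is to produce a cut of $T$, pull it back to a balanced partition of $X$, apply the separation hypothesis to extract many disjoint radius-$a$ balls crossing that partition, and count these balls using the bounded branching of $T$. Throughout, I assume we are in the nontrivial case $diam(X) > cD$; otherwise the first alternative in the statement holds.

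\emph{Step 1 (balanced tree cut).} I apply the classical tree centroid argument to the subadditive set function $A \mapsto Vol_a(f^{-1}(A))$ defined on subtrees $A \subset T$. This produces a point $v \in T$ such that every connected component $C$ of $T \setminus \{v\}$ satisfies $Vol_a(f^{-1}(C)) \le 2V/3$. Since there are at most $d$ such components, I group them into two classes to obtain a partition $X = U_1 \sqcup U_2$ with $Vol_a(U_i) \ge V/3$ for $i=1,2$; subadditivity $V \le Vol_a(U_1) + Vol_a(U_2)$ makes the grouping combinatorially feasible. The diameter hypothesis combined with $diam(X) > cD$ is what allows the partition to be genuine and both sides to be metrically nontrivial: each $U_i$ then has diameter at least $diam(X)/D > c$, and the QI lower bound forces $f(U_i)$ to have positive diameter in $T$.

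\emph{Step 2 (many crossings).} By the definition of $S = sep_a(X)$, there exist pairwise disjoint balls $B_1, \ldots, B_S$ of radius $a$ in $X$, each meeting both $U_1$ and $U_2$. For each $j$ pick $p_j \in B_j \cap U_1$ and $q_j \in B_j \cap U_2$. Because $f(p_j)$ and $f(q_j)$ lie in distinct components of $T \setminus \{v\}$, the unique $T$-geodesic between them passes through $v$. The upper QI inequality gives
$$d_T(f(p_j), f(q_j)) \le \lambda\, d_X(p_j, q_j) + c \le 2a\lambda + c =: R,$$
so $f(p_j) \in B_T(v, R)$ for every $j$.

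\emph{Step 3 (counting in the tree).} In a tree of degree at most $d$, the ball $B_T(v, R)$ admits a cover by at most $d^R$ ``cells'' of unit diameter (for instance, edges together with their endpoints). Assign each index $j$ to the cell containing $f(p_j)$. For two indices $j, j'$ in the same cell, $d_T(f(p_j), f(p_{j'})) \le 1$, so the lower QI inequality yields $d_X(p_j, p_{j'}) \le c$ (absorbing the bounded $\lambda$-term into $c$, taking $\lambda \ge 1$). Hence the set of $p_j$ assigned to one cell lies in a ball of radius $c$ in $X$. Since the balls $B_j = B(x_j, a)$ are pairwise disjoint, the number of such $p_j$ is bounded by $Vol_a(B(c))$. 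Multiplying yields
$$S \le d^R \cdot Vol_a(B(c)),$$
and taking $\log_d$ gives $\lambda\, 2a + c \ge \log_d\bigl(S/Vol_a(B(c))\bigr)$, as required.

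\emph{Main obstacle.} The genuinely subtle point is Step 1. The classical tree centroid argument is naturally stated for a measure; here $Vol_a$ is only a covering number and therefore just subadditive. Extracting a balanced partition from this weaker data is where the diameter hypothesis is needed, and the dichotomy ``$diam(X) \le cD$ versus the displayed inequality'' is precisely the regime in which subadditivity of $Vol_a$ together with the lower QI inequality is strong enough to yield a nondegenerate cut of the tree.
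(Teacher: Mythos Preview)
Your approach is essentially the paper's: balanced tree cut, apply separation to produce many crossing $a$-balls, then count using the degree bound. Two remarks.

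First, your explanation of where the diameter hypothesis enters in Step~1 is slightly off. You write that it makes ``both sides metrically nontrivial,'' but that is a consequence, not the mechanism. The paper (and implicitly your argument) needs the hypothesis one step earlier: if some single vertex $t$ has $Vol_a(f^{-1}(t))\ge V/3$, then since any two points of $f^{-1}(t)$ have the same image, the lower QI bound gives $\mathrm{diam}\,f^{-1}(t)\le c$, whence $\mathrm{diam}(X)\le cD$. It is precisely this case that your assumption $\mathrm{diam}(X)>cD$ excludes, and \emph{that} is what makes the centroid/grouping feasible: once every vertex has small preimage, the jumps of $Vol_a$ as you sweep across the tree are each $<V/3$, so you can stop with $V/3\le Vol_a(U_1)\le 2V/3$ and get $Vol_a(U_2)\ge V/3$ by subadditivity. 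Your closing paragraph shows you see this, but the mid-proof sentence obscures it. The paper sidesteps the subadditivity fuss by first fixing a maximal packing $\{B_j\}$ and counting centers, which is genuinely additive; you might find that cleaner.

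Second, your Step~3 is a dual of the paper's: you cover $B_T(v,R)$ by $\sim d^R$ cells and pigeonhole downward, whereas the paper counts distinct images in $T_1$ and pigeonholes to find one point at distance $\ge\log_d(S/Vol_a(B(c)))$ from $T_2$. Both are fine. Note that with unit-diameter cells you get $d_X(p_j,p_{j'})\le \lambda+c$ rather than $\le c$; using tree vertices (as the paper does, treating $T$ as discrete) gives $d_T=0$ and hence exactly $d_X\le c$, matching the stated bound $Vol_a(B(c))$.
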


\begin{proof}
Let $\{B_j\}$ be a maximal set of pairwise disjoint balls of radius $a$. We consider $T$ as a finite discrete metric space. If there exists a vertex $t$ of $T$ such that at least one third of centers of $B_j$ are sent to $t$ then $diam (X) \le cD$ because of the hypothesis on the space $X$. Otherwise, for any vertex $t$,
$$Vol_a(f^{-1}(t)) < Vol_a(X)/3.$$

We are going to find a vertex $t$ which divides the tree into two components $T = T_1 \cup T_2, T_1\cap T_2 = \{t\}$ such that $Vol_a f^{-1}(T_i)$, $i=1,2$, is at least one third of $Vol_a(X)$. To show this, it suffices to start from some boundary vertex (we will call $T_1$ the component which contains the initial vertex) of the tree and to pass from one vertex to another.  At every step we choose a vertex which increases $Vol_af^{-1}(T_1)$. We finish when the accumulated volume is sufficient, that is $Vol_af^{-1}(T_1) \ge Vol_a(X)/3$.

Denote by $U_i = f^{-1}(T_i), i=1,2$. The number $N_s$ of balls $B_j$ which intersect both $U_1$ and $U_2$ is at least $N_s \ge sep_a(X) = S$. Let $I$ be a set which contains a point of the intersection $U_1\cap B_j$ for all such balls, denote the image of $I$ by $I' = f(I)$.
$$|I'| \ge \frac{S}{Vol_a(B(c))}.$$
Because $I' \subset T_1$, there exists $v_1 \in I'$ such that
$$d(v_1, T_2) \ge \log_d\frac{S}{Vol_a(B(c))}.$$
Thus $v_1 = f(u_1)$ and $u_1 \in B_j$ which intersects $U_2$, there exists $u_2 \in U_2\cap B_j$ and $d(u_1, u_2) \le 2a$, hence $d(v_1, f(u_2)) \le \lambda2a+c$. Hence,
$$\lambda2a+c \ge \log_d\frac{S}{Vol_a(B(c))}.$$
\end{proof}

Consider $\HH^n$ with $n\ge 3$. For a ball of radius $R$ in $\HH^n$ we have $S \sim e^{(n-2)R}$ (we will prove this soon, in Lemma \ref{sephn}), $V \sim e^{(n-1)R}$ and $D = 1$. Then the application of Theorem \ref{linearTree} to $B(R) \subset \HH^n$ with $n \ge 3$ proves the linear quasi-isometric distortion between $\HH^n$ and a regular tree.

\begin{corollary}
The quasi-isometric distortion growth for hyperbolic space $\HH^n$, $n\ge 3$, and a regular tree is linear in $R$.
\end{corollary}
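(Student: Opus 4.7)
My plan is to apply Theorem~\ref{linearTree} to the ball $X=B_{\HH^n}(x_0,R)$ with $n\ge 3$, using the three numerical inputs recorded just before the corollary: the volume $V=Vol_a(X)\sim e^{(n-1)R}$ (classical), the separation $S=sep_a(X)\sim e^{(n-2)R}$ (Lemma~\ref{sephn}, which I take as given), and $D$ an absolute constant. The last input reflects that any subset of $B_{\HH^n}(x_0,R)$ whose $a$-volume is at least $V/3$ must reach near the outer sphere: otherwise it would fit in a concentric ball of radius $R-c_n$ and its $a$-volume would be exponentially smaller than $V/3$, so its diameter is $\gtrsim \text{diam}(X)=2R$.

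Given any $(\lambda,c)$-quasi-isometric embedding $f:B_{\HH^n}(x_0,R)\to T$ into a $(d+1)$-regular tree, Theorem~\ref{linearTree} yields two mutually exclusive alternatives. The first, $\text{diam}(X)\le cD$, immediately gives $c\ge 2R/D$, hence $\lambda+c\gtrsim R$. In the second alternative, I plug in the hyperbolic volume estimate $Vol_a(B(c))\lesssim e^{(n-1)c}$ and rewrite the conclusion of the theorem as
$$2a\lambda + c \;\ge\; \log_d \frac{S}{Vol_a(B(c))} \;\ge\; \frac{(n-2)R-(n-1)c-O(1)}{\log d}.$$
Since $n\ge 3$, the coefficient $n-2$ of $R$ is strictly positive, so moving the $c$-terms to the left yields $\lambda+c \ge \text{const}(n,d,a)\cdot R - O(1)$.

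In both alternatives $\lambda+c$ is bounded below by a linear function of $R$, so $D_G(\HH^n,x_0,T,y_0)(R)\gtrsim R$. Combined with the trivial linear upper bound (realise any embedding with $\lambda=1$, $c=R$), this shows that the distortion growth is linear in $R$. The main obstacle in this deduction is not the deduction itself, which is routine once Theorem~\ref{linearTree} is in hand, but the separation estimate of Lemma~\ref{sephn}: producing the lower bound $sep_a\gtrsim e^{(n-2)R}$ is precisely where the hypothesis $n\ge 3$ enters. For $n=2$ a single geodesic separates the ball with only $\sim R$ crossed $a$-balls, and indeed Proposition~\ref{BigTree} shows that only $\sqrt{R}$ distortion suffices in that case, so this dimension dichotomy is essential rather than a defect of the argument.
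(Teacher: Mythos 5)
Your proposal is correct and follows essentially the same route as the paper: plug the estimates $V \sim e^{(n-1)R}$, $S \sim e^{(n-2)R}$ (from Lemma~\ref{sephn}) and the diameter condition into Theorem~\ref{linearTree}, and observe that both alternatives force $\lambda + c \gtrsim R$. The paper states this in a single terse sentence with $D=1$; your version simply fills in the arithmetic of the second alternative and the reason the diameter hypothesis holds, and correctly identifies the separation estimate as the place where $n\ge 3$ is used.
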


\begin{lemma}\label{sephn}
Let $B(R):=B_{\HH^n}(R) = A_1\sqcup A_2$ be a partition of an $R$-ball of hyperbolic $n$-space. Suppose that both pieces have large volume: $Vol A_i \ge 1/3 Vol B(R)$, $i=1,2$. Then for $R$ large enough the volume of the common boundary of $A_1$ and $A_2$, $S_{12} = \partial A_1 \cap \partial A_2$ is at least 
$$Vol S_{12} \ge const(n) e^{(n-2)R},$$
where the multiplicative constant depends only on dimension $n$.
\end{lemma}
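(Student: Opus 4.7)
The plan is to combine the coarea formula with the isoperimetric inequality on concentric spheres, after localizing to the outer shell where $A_1$ and $A_2$ must coexist. Write $\beta(r) = Vol(S(r)) \sim e^{(n-1)r}$, $a_i(r) = Vol(A_i \cap S(r))$, $\tilde\alpha(r) = a_1(r)/\beta(r) \in [0,1]$, and let $b(r) = \mathcal{H}^{n-2}(S_{12} \cap S(r))$. Applying the coarea formula to the restriction of the distance function $x \mapsto d(x,O)$ to $S_{12}$ yields $Vol(S_{12}) \ge \int_0^R b(r)\,dr$. Since each $S(r)$ is intrinsically a round $(n-1)$-sphere of radius $\sinh r$, the spherical isoperimetric inequality gives $b(r) \ge c_n \sinh(r)^{n-2}\, h(\tilde\alpha(r))$, where $h$ is the isoperimetric profile of the unit sphere and $h(\alpha) \ge h_0 > 0$ uniformly for $\alpha \in [\delta, 1-\delta]$ and any fixed $\delta \in (0, 1/2)$.

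First I would localize to the outer shell $\Omega = B(R) \setminus B(R-1)$. Since $Vol(B(R-1))/Vol(B(R))$ tends to $e^{-(n-1)} < 1$, the hypothesis $Vol(A_i) \ge Vol(B(R))/3$ forces $Vol(A_i \cap \Omega) \ge c\, e^{(n-1)R}$ for both $i = 1, 2$ once $R$ is large, equivalently $\int_{R-1}^R a_i(r)\, dr \ge c\, e^{(n-1)R}$. The goal then is to produce a subset $E \subset [R-1, R]$ of measure bounded below uniformly in $R$ on which $\tilde\alpha$ takes values in some fixed interval $[\delta, 1-\delta]$. If such an $E$ exists, then for $r \in E$ we have $b(r) \gtrsim e^{(n-2)r} \gtrsim e^{(n-2)R}$, and $Vol(S_{12}) \gtrsim e^{(n-2)R}$ follows at once by the coarea bound.

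The main obstacle is the \emph{bimodal} case, in which $\tilde\alpha$ avoids $[\delta, 1-\delta]$ on almost all of $[R-1, R]$. The two integral lower bounds force both $T_0 = \{r : \tilde\alpha(r) < \delta\}$ and $T_1 = \{r : \tilde\alpha(r) > 1-\delta\}$ to carry substantial $\beta$-weight, so the slice $A_1 \cap S(r)$ must switch between being almost empty and almost full as $r$ varies. When this switch cannot be carried through intermediate values of $\tilde\alpha$, it has to be realized by a horizontal piece of $S_{12}$ --- a portion of $\partial A_1$ whose normal is nearly radial --- of $(n-1)$-area comparable to $\beta(r) \gtrsim e^{(n-1)R}$, which already dwarfs the target $e^{(n-2)R}$. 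Making this dichotomy rigorous is the delicate point: it can be done either via the structure theorem for sets of finite perimeter applied to $\chi_{A_1}$ in polar coordinates, or more directly by bounding the horizontal contribution to the perimeter by the total variation of the family $r \mapsto A_1 \cap S(r)$ of subsets of $S^{n-1}$ in the symmetric-difference metric. Either way, one gets a clean dichotomy and the lemma follows.
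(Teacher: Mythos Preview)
Your slicing-plus-spherical-isoperimetry strategy is sound and genuinely different from the paper's argument. The paper proceeds in one stroke via the $L^1$ Poincar\'e inequality on $B_{\HH^n}(R)$: it mollifies $\chi_{A_1}$ to a Lipschitz function $\iota$ whose gradient is bounded and supported in a fixed-width tube $T_{12}$ about $S_{12}$, so that $\int|\nabla\iota|\lesssim Vol(T_{12})\sim Vol(S_{12})$, while the balanced-volume hypothesis forces $\int|\iota-\bar\iota|\gtrsim Vol(B(R))\sim e^{(n-1)R}$; plugging in the bound $C_1(B_{\HH^n}(R))\lesssim e^R$ gives $Vol(S_{12})\gtrsim e^{(n-2)R}$ with no case analysis. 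Your route avoids invoking the hyperbolic Poincar\'e constant as a black box, at the price of the bimodal dichotomy, which you only sketch. That step is fillable exactly as you say: choosing $r_0\in T_0$, $r_1\in T_1$ in $[R-1,R]$, BV slicing along radii gives
\[
Vol(S_{12})\ \ge\ \sinh^{n-1}(R-1)\int_{S^{n-1}}\big|\chi_{A_1}(r_1,\omega)-\chi_{A_1}(r_0,\omega)\big|\,d\omega\ \ge\ |S^{n-1}|\sinh^{n-1}(R-1)(1-2\delta)\ \gtrsim\ e^{(n-1)R},
\]
which is even stronger than needed. But note that this bookkeeping is precisely the relative isoperimetric inequality in $B(R)$, which the paper's Poincar\'e inequality encodes in a single line; if you are willing to quote that inequality, your case split disappears.
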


\begin{proof}
Consider the indicator function $\hat\iota$ of $A_1$
$$
\hat\iota(x) = \left\{ \begin{array}{cc}
	1 & x \in A_1 \\
	0 & x \in A_2
       \end{array} \right.
$$
We would like to write $1$-Poincar\'e inequality for $\hat\iota$ but first we have to make it smooth.
%Take a tubular $1$-neighbourhood $T_{12}$ of $S_{12}$. If $Vol (B_1\setminus T_{12})$ or $Vol (B_2\setminus T_{12})$ is too small, then there is nothing to prove as $Vol T_{12} \sim Vol B(R)$. Otherwise we redefine $\iota$ on $T_{12}$ is the way that $\nabla \iota\mid_{T_{12}}$ is uniformly bounded and equals to $1$ on $B_1\setminus T_{12}$ and to $0$ on $B_2\setminus T_{12}$. We notice here that $Vol_{T_{12}} \sim Vol_{S_{12}}$ (up to some multiplicative constants depending on $n$). Let us show that such function $\iota$ exists.

Fix two real numbers $r_1 < r_2$. Take an $r_1$-separated and $r_2$-dense set $S$. For any point $z \in S$ we define a function $h_z$ as follows,
$$
\hat h_z(x) = \left\{ \begin{array}{cc}
	1 - \frac{d(x,z)}{2R} & x\in B(z, 2r_2) \\
	0 & x\in\overline{B(z, 2r_2)}
       \end{array} \right..
$$
Now set $h_z(x) = \hat h_z(x)/\sum \hat h_{z'}(x)$ which gives a partition of unity. Evidently gradients of $h_z$ are uniformly bounded in function of $r_1, r_2$ and a number $L(r_1, r_2, n)$ of disjoint balls of radius $r_1$ in a ball of radius $2r_2$: $\nabla h_z \le N(r_1, r_2, n)$. Now we set
$$
\iota(x) = \sum_{z\in S}\iota(z)h_z(x).
$$
Then
$$\nabla\iota(x) = \sum_{z\in S}\iota(z)\nabla h_z(x) \le L(r_1, r_2, n)N(r_1, r_2, n)$$
because $\iota$ takes only two values $0$ and $1$ and for any $x$ there is not more than $L(r_1, r_2, n)$ functions $h_z$ which do not vanish at $x$. Now we notice that outside of $2r_2$-tubular neighbourhood $T_{12}$ of $S_{12} = \partial A_1\cap \partial A_2$, $\iota$ coincides with $\hat\iota$. We notice here that $Vol T_{12} \sim Vol S_{12}$ (up to some multiplicative constants depending on $n$ and $r_2$). If $Vol (A_1\setminus T_{12})$ or $Vol (A_2\setminus T_{12})$ is too small, then there is nothing to prove as $Vol T_{12} \sim Vol B(R)$. Otherwise we write Poincar\'e inequality for $\iota$.

The mean value $Vol(A_1\setminus T_{12})\le \oint\iota dVol \le Vol(A_1\setminus T_{12}) + \sup\nabla\iota\mid_{T_{12}} Vol T_{12}$, that is
$$c_{\iota} = \oint\iota dVol \sim Vol A_1.$$
We are ready to write $1$-Poincar\'e inequality for $\iota$ for $R$ large enough,
$$\int|\iota - c_{\iota}|dVol \le const(n)e^{R}\int|\nabla \iota|dVol,$$
where $const(n)$ is some constant depending only on dimension $n$. We compute the left-hand integral, 
$$\int|\iota - c_{\iota}|dVol \approx \int_{A_1}|1 - c_{\iota}|dVol + \int_{A_2}c_{\iota}dVol = (1 - c_{\iota})Vol A_1 + c_{\iota} Vol A_2 \ge \frac{2}{3}Vol B(R).$$
For the right-hand integral, we obtain
$$\int|\nabla \iota|dVol = \int_{T_{12}}|\nabla \iota|dVol \le \sup\nabla\iota Vol T_{12} = const(n) Vol S_{12}.$$

Combining all these inequalities we conclude that
$$Vol B(R) \le const(n) e^{R} Vol S_{12}.$$
As for $R$ large enough $Vol B(R) \sim e^{(n-1)R}$ we finish the proof with the needed result
$$Vol S_{12} \ge const(n) e^{(n-2)R}.$$
\end{proof}

\begin{question}
What is the quasi-isometric distortion between a $d$-regular tree and hyperbolic plane $\HH^2$.
\end{question}

\section{Approximation of distances and radial quasi-isometries}
\label{adequi}

\subsection{Orthogonal triangles in hyperbolic spaces}
At the beginning of this section we give to lemmas on the geometry of orthogonal triangles in hyperbolic spaces. The second Lemma will be used to establish an approximation of distances in hyperbolic spaces which allow to control a quasi-isometric action.
\begin{lemma}\label{orthPr1}
Let $\sigma$ be a geodesic segment, $a$ be a point not on $\sigma$, and $c$
be a projection of $a$ on $\sigma$. Let $b\in\sigma$ be arbitrary, and
let $d$ denote a projection of $b$ on $ac$. Then $|c-d|\le2\delta$.
\end{lemma}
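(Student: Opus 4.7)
The plan is to apply the $\delta$-thin triangle condition to the geodesic triangle with vertices $a$, $b$, $c$, using that $d$ lies on the side $[a,c]$. By thinness, there is a point $d'$ on $[a,b]\cup[b,c]$ with $d(d,d')\le\delta$. I then split into two cases according to which side contains $d'$.

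In the easy case $d'\in[b,c]\subset\sigma$: since $c$ is the nearest point of $\sigma$ to $a$, we have $d(a,c)\le d(a,d')$. The triangle inequality gives $d(a,d')\le d(a,d)+d(d,d')\le d(a,d)+\delta$, so $d(a,c)\le d(a,d)+\delta$. Since $d\in[a,c]$, this rearranges to $d(c,d)=d(a,c)-d(a,d)\le\delta\le 2\delta$, as desired.

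The harder case is $d'\in[a,b]$. Here I would iterate the thin-triangle condition once more at $d'$ to find a point $d''\in[a,c]\cup[b,c]$ with $d(d',d'')\le\delta$, so $d(d,d'')\le 2\delta$. If $d''\in[b,c]\subset\sigma$, the same projection argument as above (with $2\delta$ in place of $\delta$) yields $d(c,d)\le 2\delta$. The remaining sub-case, where $d''\in[a,c]$, forces $d'$ to lie in a narrow pocket near the branch point of the comparison tripod on $[a,b]$; combined with the projection property of $d$ (minimizing distance from $b$ to $[a,c]$), this pins $d$ close to $c$. Concretely: from $d(d,d')\le\delta$ and $d'\in[a,b]$ one gets $d(a,d)+d(b,d)\le d(a,b)+2\delta$, and using $d(a,d)=d(a,c)-d(c,d)$ together with $d(a,b)\ge d(a,c)$ (the projection property of $c$) and $d(b,d)\le d(b,c)\le d(b,d)+d(c,d)$ (the projection property of $d$), the inequalities collapse to $d(c,d)\le 2\delta$.

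The main obstacle is this last sub-case. The underlying reason it works is that the hypothesis on $c$ forces the triangle $abc$ to be nearly degenerate at $c$ in the sense that the Gromov product $(a\mid b)_c$ is $O(\delta)$ (if it were much larger, the internal point of the tripod on $[b,c]\subset\sigma$ would sit strictly closer to $a$ than $c$ does, contradicting projection). Once this is known, the nearest point on $[a,c]$ to $b$ lies within $O(\delta)$ of the internal point of the tripod on $[a,c]$, which itself lies within $(a\mid b)_c\le\delta$ of $c$, and the two contributions add to $2\delta$.
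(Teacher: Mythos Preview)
Your approach via the triangle $abc$ differs from the paper's, which instead applies thinness to the triangle $bcd$. Your Case~1 and sub-case~2a are correct, but sub-case~2b has a genuine gap: the inequalities you list do \emph{not} collapse to $d(c,d)\le 2\delta$. Combining $d(a,d)+d(b,d)\le d(a,b)+2\delta$ with $d(a,d)=d(a,c)-d(c,d)$ and $d(b,c)-d(c,d)\le d(b,d)\le d(b,c)$ yields only the \emph{lower} bound $d(c,d)\ge (a\mid b)_c-\delta$; the inequality $d(a,b)\ge d(a,c)$ points the wrong way to produce an upper bound. (Indeed, nothing in your listed inequalities rules out $d$ sitting near $a$.) Your last paragraph names the missing ingredient --- that $(a\mid b)_c=O(\delta)$ because $c$ is a nearest-point projection --- but this is asserted, not proved, and even granting it you would still need a separate argument that the projection $d$ of $b$ lies within $O(\delta)$ of the tripod branch point on $[a,c]$, not just that the branch point is close to $c$.

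The paper's route sidesteps all of this. In the triangle $bcd$ (whose side $[c,d]$ lies inside $[a,c]$), take a transition point $e\in[b,d]$ that is $\delta$-close to both $[c,d]$ and $[b,c]$. The projection property of $d$ (applied to the foot of $e$ on $[c,d]\subset[a,c]$) forces $|e-d|\le\delta$; then, since $e$ is $\delta$-close to a point of $[b,c]\subset\sigma$, the projection property of $c$ gives $|a-c|\le |a-d|+|d-e|+\delta\le |a-d|+2\delta$, hence $|c-d|\le 2\delta$. Working in $bcd$ rather than $abc$ makes both projection hypotheses act in one stroke, with no case analysis.
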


\begin{proof}
By hypothesis, $bd$ minimizes the distance of $b$ to any point of $ac$, and
because the triangle $bcd$ is $\delta$-thin, there exists a point $e\in bd$
such that $d(e,ac)=|e-d|\le\delta$ and $d(e,bc)\le\delta$. Because $ac$
is a perpendicular to $\sigma$, $|a-c|\le|a-d|+|d-e|+d(e,bc)\le|a-d|+
2\delta$. Hence $|c-d|\le2\delta$.
\end{proof}

\begin{lemma}\label{orthTriangle}
As in the preceding lemma, let $\sigma$ be a geodesic segment, $a$ be a
point not on $\sigma$, $c$ be a projection of $a$ on $\sigma$, and $b$ be
some point on $\sigma$. Let $d$ denote a point on $ac$ such that
$|d-c|=\delta$ and $e$ denote a point on $bc$ such that $|e-c|=3\delta$.
Then
\begin{itemize}
\item $d(d,ab)\le\delta$, $d(e,ab)\le\delta$, $d(c,ab)\le2\delta$, and
\item the length of $ab$ differs from the sum of the lengths of the two
other sides by at most $8\delta$,
$$
|a-c|+|b-c|-2\delta\le|a-b|\le|a-c|+|b-c|+8\delta.
$$
\end{itemize}
\end{lemma}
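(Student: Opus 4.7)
The whole lemma is driven by a single estimate: that the Gromov half-difference $x_c:=(|a-c|+|b-c|-|a-b|)/2$ is at most $\delta$. I will establish this first using the perpendicularity hypothesis, and then read off all four claimed inequalities from the tripod/insize picture of the $\delta$-thin triangle $abc$.

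\noindent\textbf{Step 1 (the second bullet).} The upper bound $|a-b|\le|a-c|+|b-c|$ is just the triangle inequality and is already stronger than the stated $+8\delta$ version. For the nontrivial lower bound I use the standard insize property of the $\delta$-thin triangle $abc$: there are internal points $p\in ac$ and $q\in bc$ with $|p-c|=|q-c|=x_c$ and $|p-q|\le\delta$ (both are the images of the tripod junction on the two sides through $c$). Now $q\in bc\subset\sigma$, and since $c$ is the projection of $a$ onto $\sigma$, one has
$$|a-q|\ge|a-c|=|a-p|+x_c.$$
On the other hand, the triangle inequality along $a,p,q$ gives $|a-q|\le|a-p|+|p-q|\le|a-p|+\delta$. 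Comparing these two bounds yields $x_c\le\delta$, which rearranges to $|a-b|\ge|a-c|+|b-c|-2\delta$, the missing half of bullet two.

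\noindent\textbf{Step 2 (the first bullet).} With $x_c\le\delta$ in hand, I would locate $d$ and $e$ in the tripod picture and pair them with points on $ab$. Since $|d-c|=\delta\ge x_c$, the point $d$ sits on the arm associated to $a$, at tripod distance $\delta-x_c\in[0,\delta]$ from the junction; the tripod pairs it with a point $d'\in ab$ at the same tripod distance from the junction, and $\delta$-thinness delivers $|d-d'|\le\delta$, proving $d(d,ab)\le\delta$. For $e\in bc$ with $|e-c|=3\delta$ the same reasoning applies on the $b$-side: since $3\delta>x_c$, $e$ lies strictly in the arm associated to $b$, at tripod distance $3\delta-x_c\in[2\delta,3\delta]$ from the junction, and has a companion $e'\in ab$ with $|e-e'|\le\delta$. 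The final inequality is immediate: $d(c,ab)\le|c-d|+d(d,ab)\le\delta+\delta=2\delta$.

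\noindent\textbf{Where the difficulty hides.} The only nontrivial input is the insize pairing $|p-q|\le\delta$ of the two internal points at $c$; this is the standard tripod approximation of a $\delta$-thin triangle. Once it is granted, the perpendicularity hypothesis is used in exactly one spot (to conclude $|a-q|\ge|a-c|$ from $q\in\sigma$), and the rest is bookkeeping inside the tripod. Depending on the exact version of $\delta$-hyperbolicity one starts from, the insize pairing constant may need to be replaced by a small integer multiple of $\delta$ (and the $2\delta,8\delta$ in bullet two likewise loosened), but the scheme above is robust to such shifts.
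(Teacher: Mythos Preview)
Your argument is correct. The key step---bounding the Gromov product $x_c=(a\,|\,b)_c$ by $\delta$ using that the internal point $q\in bc\subset\sigma$ is no closer to $a$ than the foot $c$---is clean and does exactly what is needed; everything in both bullets then falls out of the tripod comparison. Your caveat about which formulation of $\delta$-hyperbolicity one uses (Rips thinness versus the tripod/insize pairing) is well placed: you are using the version in which corresponding tripod points on two sides are $\delta$-close, and the constants would shift by a bounded factor under the Rips definition.

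Your route differs from the paper's. The paper does not compute $x_c$ directly. Instead it invokes the preceding lemma (projection of $b$ onto $ac$ lands within $2\delta$ of $c$) to show that any point of $bc$ that is $\delta$-close to $ac$ must lie within $3\delta$ of $c$; by thinness this forces $e$ (at distance $3\delta$ from $c$) to be $\delta$-close to $ab$. For the length comparison the paper then projects $d$ and $e$ onto $ab$ and chains triangle inequalities through the resulting four short segments. Your Gromov-product approach is more direct: it dispenses with Lemma~\ref{orthPr1}, gives the lower bound $|a-b|\ge|a-c|+|b-c|-2\delta$ in one line, and recognizes that the upper bound $+8\delta$ is subsumed by the plain triangle inequality. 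The paper's approach, on the other hand, stays closer to the Rips formulation and makes the role of the special point $e$ at distance $3\delta$ more transparent (that constant is forced by the $2\delta$ of the preceding lemma plus the $\delta$ of thinness).
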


\begin{proof}
The triangle $abc$ is $\delta$-thin. Therefore, obviously, $d(d,ab)\le
\delta$ (the distance from a point of $ac$ to $ab$ is a continuous
function). We take a point $x\in bc$ such that $d(x,ca)\le\delta$.
Using Lemma~\ref{orthPr1}, we obtain $|b-x|+d(x,ca)\ge|b-c|-2\delta$, and
hence $|c-x|\le d(x,ca)+2\delta\le3\delta$.

We now let $d_1$ and $e_1$ denote respective projections of $d$ and $e$
on $ab$. Then by the triangle inequality, we have
\begin{itemize}
\item $|a-d|-\delta\le|a-d_1|\le|a-d|+\delta$,
\item $|b-e|-\delta\le|b-e_1|\le|b-e|+\delta$, and
\item $0\le|d_1-e_1|\le|d_1-d|+|d-c|+|c-e|+|e-e_1|\le6\delta$.
\end{itemize}
Combining all these inequalities, we obtain the second point in the lemma.
\end{proof}

\subsection{Approximation of distances in hyperbolic metric spaces}
Let $X, Y$ be two geodesic hyperbolic metric spaces with base points $x_0 \in X$, $y_0 \in Y$. Let $\theta: \partial X  \to \partial Y$ be a homeomorphism between ideal boundaries.

\begin{hypothesis}\label{hypo1}
Assume that there exists a constant $D$ such that for any $x \in X$ there exists a geodesic ray $\gamma$ from the base point $\gamma(0) = x_0$ and passing near $x$: $d(x, \gamma) < D$.
\end{hypothesis}

We are going to construct approximately (up to $D$) a map $\Theta: X\to Y$ extending the boundary homeomorphism $\theta$. Take some point $x$ and a geodesic ray $\gamma$ from $x_0$ passing near $x$: $d(\gamma, x) < D$. Then $\gamma(\infty)$ is a point on ideal boundary $\partial X$. The corresponding point $\theta(\gamma(\infty)) \in \partial Y$ defines a geodesic ray $\gamma'$ such that $\gamma'(0) = y_0$ and $\gamma'(\infty) = \theta(\gamma(\infty))$. Set $\Theta(x) = \gamma'(d(x_0, x))$. So, by construction, $\Theta$ preserves the distance to the base point. Still, it depends on the choices of $\gamma$ and $\gamma'$.

\begin{definition}
Define the following quantity
$$K(R) = \sup\left\{\left|\log\frac{d_{y_0}(\theta(\xi_1), \theta(\xi_2))}{d_{x_0}(\xi_1, \xi_2)}\right|| d_{x_0}(\xi_1, \xi_2) \ge e^{-R} \vee d_{y_0}(\theta(\xi_1), \theta(\xi_2)) \ge e^{-R}\right\}.$$
\end{definition}

We are going to prove that the restriction of $\Theta$ on the ball $B(R) \subset X$ of radius $R$ is a $\left(1+2\frac{K(R)}{D+\delta}, D+\delta+2K(R)\right)$-quasi-isometry. We begin with a Lemma which gives an approximation (up to an additive constant) of the distance between two points in a hyperbolic metric space. In its proof, all equalities hold with a bounded additive error depending linearly on $\delta$.

\begin{lemma}\label{distApprox}
Let $P_1, P_2$ be two points in a hyperbolic metric space $Z$. Let $P_0$ be a base point (possibly at infinity). Let distances (horo-distances if $P_0$ is at infinity) from $P_1$ and $P_2$ to $P_0$ be $d(P_1, P_0) = t_1$ and $d(P_2, P_0) = t_2$. Assume that there exist points $P_1^\infty$ and $P_2^\infty$ such that $P_1$ (resp. $P_2$) belongs to the geodesic ray defined by $P_0$ and $P_1^\infty$ (resp. $P_2^\infty$). Denote by\footnote{We define $visdist(P_1^\infty, P_2^\infty)$ of two points $P_1^\infty, P_2^\infty$ at the ideal boundary as the exponential of minus Gromov's product of these points $e^{-(P_1^\infty|P_2^\infty)}$. Indeed, it is not a distance as it does not satisfy triangle inequality. But we will never have more than two points at infinity at the same time in our setting, so we will not use this property.}
$$t_\infty = -\log visdist_{P_0}(P_1^\infty, P_2^\infty)$$
the logarithm of visual distance seen from $P_0$. Then up to adding a multiple of $\delta$,
$$d(P_1, P_2) = t_1 + t_2 - 2\min\{t_1, t_2, t_\infty\}.$$
\end{lemma}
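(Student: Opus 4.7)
\medskip

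\textbf{Proof plan.} The plan is to reformulate the claim via the Gromov product and reduce it to the standard fellow-traveling picture for geodesic rays in a $\delta$-hyperbolic space. Assume first $P_0$ is a finite point, and set
\[
(P_1\,|\,P_2)_{P_0} := \tfrac12\bigl(d(P_0,P_1) + d(P_0,P_2) - d(P_1,P_2)\bigr).
\]
The identity we wish to prove is then equivalent to
\[
(P_1\,|\,P_2)_{P_0} = \min\{t_1, t_2, t_\infty\} + O(\delta),
\]
since $d(P_1,P_2)=t_1+t_2-2(P_1\,|\,P_2)_{P_0}$. By the definition of the visual distance at $P_0$, the quantity $t_\infty$ coincides with the Gromov product $(P_1^\infty\,|\,P_2^\infty)_{P_0}$ up to $O(\delta)$. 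In a $\delta$-hyperbolic space this same quantity is also, up to $O(\delta)$, the distance from $P_0$ to the geodesic $[P_1,P_2]$, so the task reduces to locating the foot of perpendicular from $P_0$ to $[P_1,P_2]$.

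The geometric input I would use is the following. Fix geodesic rays $\gamma_1,\gamma_2$ from $P_0$ with $\gamma_i(\infty)=P_i^\infty$, so that $P_i=\gamma_i(t_i)$. By $\delta$-thinness of the ideal triangle $(P_0,P_1^\infty,P_2^\infty)$, the two rays stay $O(\delta)$-close on the initial subsegment of length $t_\infty$ and separate afterwards. In particular, the ``bifurcation point'' $Q:=\gamma_1(t_\infty)$ satisfies $d(Q,\gamma_2)\leq C\delta$ for a constant $C$ depending linearly on $\delta$.

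I would then split into cases using Lemmas \ref{orthPr1} and \ref{orthTriangle} to control projections. If $t_\infty\leq\min(t_1,t_2)$, the tripod structure of the triangle $P_0P_1P_2$ forces $[P_1,P_2]$ to pass within $O(\delta)$ of $Q$, hence the projection of $P_0$ onto $[P_1,P_2]$ is at distance $t_\infty+O(\delta)$ from $P_0$. If, instead, $t_1\leq\min(t_2,t_\infty)$ (the case $t_2\leq\min(t_1,t_\infty)$ being symmetric), then $P_1=\gamma_1(t_1)$ is $O(\delta)$-close to $\gamma_2$, so $[P_1,P_2]$ shadows a subsegment of $\gamma_2$; the foot of perpendicular from $P_0$ then lands $O(\delta)$-near $P_1$, giving $d(P_0,[P_1,P_2])=t_1+O(\delta)$. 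Combining the three cases yields $(P_1\,|\,P_2)_{P_0}=\min\{t_1,t_2,t_\infty\}+O(\delta)$, which is the claim.

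For $P_0$ at infinity, the argument proceeds identically but with distances to $P_0$ replaced by horo-distances $t_i=-\beta_{P_0}(P_i)$ (Busemann function) and the rays $\gamma_i$ replaced by bi-infinite geodesics sharing the ideal endpoint $P_0$. Such geodesics converge exponentially toward $P_0$, and the analogue of the bifurcation point $Q$ is a point where the two geodesics first come within $O(\delta)$; by a Busemann-function version of the Gromov product computation, this occurs essentially at horospherical level $t_\infty$. The main obstacle in writing the proof carefully is the bookkeeping of $O(\delta)$ errors across the three cases, and especially, in the ideal-basepoint case, checking that the orthogonal projection lemmas still furnish bounded-additive-error approximations when spheres around $P_0$ are replaced by horospheres.
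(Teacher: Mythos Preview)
Your proposal is correct and follows essentially the same approach as the paper: a case analysis on the relative sizes of $t_1,t_2,t_\infty$ using the thin-triangle/tripod structure, with Lemmas~\ref{orthPr1} and~\ref{orthTriangle} handling the projection estimates. The only cosmetic difference is that the paper anchors the computation to the point $P_0'$, the projection of $P_0$ onto the bi-infinite geodesic $P_1^\infty P_2^\infty$, and reads off $d(P_1,P_2)$ directly in each case, whereas you phrase everything through the Gromov product identity $(P_1|P_2)_{P_0}=\min\{t_1,t_2,t_\infty\}+O(\delta)$ and use the bifurcation point $Q=\gamma_1(t_\infty)$; since $P_0'$ and $Q$ coincide up to $O(\delta)$, the two arguments are interchangeable.
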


\begin{proof}
%Consider geodesics from the base point $P_0$ passing through $P_1$ and $P_2$. Denote their ends at infinity by $P_1^\infty$ and $P_2^\infty$.
Let $P_0'$ be a projection of $P_0$ on the geodesic $P_1^\infty P_2^\infty$. By Lemma \ref{orthTriangle}, $P_0'$ lies at distance at most $2\delta$ from both $P_0P_1^\infty$ and $P_0P_2^\infty$. Hence, up to an additive constant bounded by $4\delta$ the distance between $P_0$ and $P_0'$ is equal to Gromov's product of $P_1^\infty$ and $P_2^\infty$. It follows that $t_\infty = d(P_0, P_0') = -\log visdist(P_1, P_2)$.

The triangle $P_0P_1^\infty P_2^\infty$ is $\delta$-thin. Notice that if $P_1$ (or $P_2$) lies near the side $P_1^\infty P_2^\infty$ then $t_1 \ge t_\infty$. Otherwise, $t_1 \le t_\infty$ (both inequalities are understood up to an additive error $\delta$). This follows from the definition of the point $P_0'$ as a projection and Lemma \ref{orthTriangle}.

Hence, if $t_1, t_2 \ge t_\infty$, $d(P_1, P_2) = d(P_1, P_0) + d(P_2, P_0) - 2d(P_0, P_0') = t_1 + t_2 - 2t_\infty$.

If $t_1 \le t_\infty \le t_2$, $d(P_1, P_2) = d(P_1, P_0') + d(P_0', P_2) = t_2 - t_1$.

Finally, if $t_1, t_2 \le t_\infty$, we get $d(P_1, P_2) = |t_1 - t_2| = t_1 + t_2 - 2\min\{t_1, t_2\}$ as $P_1$ lies near $P_0P_2^\infty$.

\end{proof}

\subsection{Construction of quasi-isometry}
Although the quasi-isometry which will be constructed in this section can seem to be a bit naive, it will allow us to establish an example of logarithmic quasi-isometric distortion in section \ref{unipotent}
\begin{lemma}
Let $Z$ and $Z'$ be two hyperbolic metric spaces. Let $\Theta$ be the radial extension of a boundary homeomorphism $\theta$, as described at the beginning of this section.
Then for any two points $P_1$, $P_2\in B(P_0, R) \subset Z$ such that $d(P_1,P_2) > c$, we have 
$$\frac{d_{Z'}(\Theta(P_1), \Theta(P_2))}{d_Z(P_1, P_2)} \le 1+2\frac{K(R)}{c}.$$
If $d(P_1,P_2) < c$, $$d_{Z'}(\Theta(P_1), \Theta(P_2)) < 2K(R) + c.$$
\end{lemma}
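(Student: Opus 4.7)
The plan is to reduce everything to Lemma~\ref{distApprox}: apply it on both sides and observe that the only term that moves under $\Theta$ is the visual-distance term, whose variation is controlled precisely by $K(R)$.

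First I would note that by the very construction of $\Theta$ (radial extension along geodesic rays from the base points), the distances to the base points are preserved: if $t_i = d(P_0,P_i)$, then $d(y_0,\Theta(P_i)) = t_i$ as well. Moreover, if $P_i^\infty$ is the endpoint of the ray from $P_0$ through $P_i$, then $\Theta(P_i)$ lies on the ray from $y_0$ to $\theta(P_i^\infty)$. Thus by Lemma~\ref{distApprox}, up to a bounded additive $O(\delta)$ error,
\begin{equation*}
d_Z(P_1,P_2) = t_1+t_2-2\min\{t_1,t_2,t_\infty\},\qquad d_{Z'}(\Theta(P_1),\Theta(P_2)) = t_1+t_2-2\min\{t_1,t_2,t'_\infty\},
\end{equation*}
where $t_\infty = -\log \vd_{x_0}(P_1^\infty,P_2^\infty)$ and $t'_\infty = -\log \vd_{y_0}(\theta(P_1^\infty),\theta(P_2^\infty))$.

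Next I would show that $|t_\infty - t'_\infty| \le K(R)$. Since $P_1,P_2\in B(P_0,R)$ and the rays $P_0P_i^\infty$ pass near $P_i$, we have $t_1,t_2\le R$. If either $t_\infty\le R$ or $t'_\infty\le R$, then at least one of the visual distances $\vd_{x_0}(P_1^\infty,P_2^\infty)$, $\vd_{y_0}(\theta(P_1^\infty),\theta(P_2^\infty))$ is $\ge e^{-R}$, which is exactly the condition entering the definition of $K(R)$, giving $|t_\infty - t'_\infty|\le K(R)$. In the remaining case $t_\infty,t'_\infty>R\ge t_1,t_2$, both minima equal $\min\{t_1,t_2\}$, so the two distances agree (up to $O(\delta)$) and the inequality $|t_\infty-t'_\infty|\le K(R)$ is irrelevant. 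Either way, using $|\min\{t_1,t_2,s\}-\min\{t_1,t_2,s'\}|\le |s-s'|$, we get
\begin{equation*}
\bigl|d_{Z'}(\Theta(P_1),\Theta(P_2)) - d_Z(P_1,P_2)\bigr| \le 2K(R)
\end{equation*}
(absorbing the $O(\delta)$ constants into $K(R)$, which one may assume to dominate $\delta$).

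Finally, I would derive the two claimed bounds. If $d_Z(P_1,P_2)>c$, then $d_{Z'}(\Theta(P_1),\Theta(P_2))\le d_Z(P_1,P_2)+2K(R) \le d_Z(P_1,P_2)(1+2K(R)/c)$, which is the first inequality. If $d_Z(P_1,P_2)<c$, then $d_{Z'}(\Theta(P_1),\Theta(P_2)) < c + 2K(R)$, which is the second. The main obstacle I expect is the bookkeeping around the $\delta$-errors in Lemma~\ref{distApprox} and around the case $\min\{t_1,t_2,t_\infty\}=t_\infty$ versus $=\min\{t_1,t_2\}$: one must verify that the jump across this threshold is continuous up to $\delta$ so that the $K(R)$-control survives uniformly. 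Once this is checked, the argument is essentially a one-line comparison of two applications of the distance formula.
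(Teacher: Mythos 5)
Your proposal is correct and rests on the same foundation as the paper's argument (both proofs apply Lemma~\ref{distApprox} to $Z$ and $Z'$ and then control the visual-distance term $t_\infty$ versus $t'_\infty$ via $K(R)$), but you organize the case analysis differently and, in my view, more cleanly. The paper splits into four explicit cases depending on the ordering of $t_1$, $t_\infty$, $t'_\infty$ and, within cases 2 and 4, uses the hypothesis $d_Z(P_1,P_2)>c$ to verify that the visual distance $e^{-t_\infty}$ exceeds $e^{-R}$ before invoking $K(R)$. You instead observe that $s\mapsto\min\{t_1,t_2,s\}$ is $1$-Lipschitz, so
\[
|d_{Z'}-d_Z|\le 2\,\bigl|\min\{t_1,t_2,t_\infty\}-\min\{t_1,t_2,t'_\infty\}\bigr|\le 2\,|t_\infty-t'_\infty|,
\]
and then dichotomize: if either $t_\infty\le R$ or $t'_\infty\le R$, then at least one of the visual distances is $\ge e^{-R}$ and the definition of $K(R)$ directly gives $|t_\infty-t'_\infty|\le K(R)$; if both exceed $R\ge\max\{t_1,t_2\}$, both minima collapse to $\min\{t_1,t_2\}$ and the difference is (up to $\delta$) zero. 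This is logically equivalent but avoids the four-way split and does not need the hypothesis $d_Z>c$ until the final division step, so it also yields the two-sided estimate $|d_{Z'}-d_Z|\le 2K(R)$ for free. One small point worth making explicit: in the mixed sub-case (say $t_\infty\le R<t'_\infty$), the Lipschitz bound $|\min\{t_1,t_2,t_\infty\}-\min\{t_1,t_2,t'_\infty\}|\le|t_\infty-t'_\infty|\le K(R)$ still applies because $K(R)$, being a supremum over all pairs with at least one side $\ge e^{-R}$, already accounts for the possibility that the other visual distance is very small; you should say this rather than leaving it implicit, since otherwise it might look like the Lipschitz step is used outside the regime where $|t_\infty-t'_\infty|$ is controlled. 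As you note, the additive $O(\delta)$ errors from Lemma~\ref{distApprox} have to be swept into the constants exactly as the paper does (ultimately into $c=D+\delta$ in Theorem~\ref{Theta}).
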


\begin{proof}
We will use the same notations as in Lemma \ref{distApprox}. Visual distance $d_Z^\infty$ between $P_1^\infty$ and $P_2^\infty$ and the (horo-)distance $t_\infty$ from $P_0$ to $P_1^\infty P_2^\infty$ are connected by the relation $e^{-t_\infty} = d_{\infty}(P_1^\infty, P_2^\infty)$. In the same way we define $t'_\infty$ as the (horo-)distance for corresponding images.

By Lemma \ref{distApprox} we know that $d(P_1,P_2) = t_1 + t_2 - 2\min\{t_1, t_2, t_\infty\}$.

Assume first $d(P_1,P_2) > c$. We will write $d_Z = d(P_1, P_2)$ for the distance between $P_1$ and $P_2$ and $d_{Z'} = d(\Theta(P_1), \Theta(P_2))$ for the distance between their images.

We have to consider four cases depending on the relative sizes of $t_1, t_2, t_0$ and $t'_\infty$ as they determine values of minima defining $d_Z$ and $d_{Z'}$. Without loss of generality, we may assume that $t_1 \le t_2$.

\emph{1st case.}
If both $t_1 < t_\infty$ and $t_1 < t'_\infty$, then
$$\frac{d_{Z'}}{d_{Z}} = \frac{t_2 - t_1}{t_2 - t_1} = 1,$$
and this case is trivial.

\emph{2nd case.}
If $t_\infty < t_1$ and $t'_\infty < t_1$. We have to give an upper bound for
$$\frac{d_{Z'}}{d_{Z}} = \frac{t_1 + t_2 - 2t'_\infty}{t_1 + t_2 - 2t_0^{\infty}}.$$
Consider
$$t'_\infty - t_\infty = \log\frac{d_\infty(\theta(P_1^\infty),\theta(P_2^\infty))}{d_\infty(P_1^\infty,P_2^\infty)}.$$
Because $d_Z > c$, we have $t_1 + t_2 - 2t_\infty > c$ hence $e^{(t_1 + t_2)/2} e^{-t_\infty} > e^{c/2}$.
And as $t_1, t_2 \le R$ we obtain for visual distance
$d^\infty_Z \ge e^{c/2}e^{-R} \ge e^{-R}.$
We conclude that
$$|t'_\infty - t_\infty| \le K(R).$$
Finally,
$$\frac{d_{Z'}}{d_Z} = \frac{d_{Z'} - d_Z + d_Z}{d_Z} = 1 + \frac{t'_\infty - t_\infty}{t_1 + t_2 - t_\infty} \le 1 + \frac{1}{c}|t'_\infty - t_\infty|.$$

\emph{3d case.}
Now let $t_\infty < t_1 < t'_\infty$. Then 
$$d_{Z'} - d_Z = t_2 - t_1 - (t_1 + t_2 - 2t_\infty) = 2(t_\infty - t_1) \le 0,$$
which leads to
$$\frac{d_{Z'}}{d_Z} \le 1.$$

\emph{4th case.}
Finally if $t'_\infty< t_1 < t_0^{\infty}$ then
$$d_{Z'} - d_Z = (t_1 + t_2 - 2t'_\infty) -(t_2 - t_1)= 2(t_1 - t'_\infty) \le 2(t_0^{\infty} - t'_\infty).$$

We know that $t_1 \le R$ and at the same time we have $t'_\infty < t_1$, hence $t'_\infty  < R$ and visual distance between $P_1^{\infty\prime}$ and $P_2^{\infty\prime}$ is at least $e^{-R}$. Now as in the 2nd case we obtain that $t_0^{\infty} - t'_\infty\le K(R)$ and hence
 
$$\frac{d_{Z'}}{d_Z}  \le 1+2\frac{K(R)}{c}.$$

Now assume that $d_Z(P_1, P_2) \le c$ (we still suppose $t_1 \le t_2$), hence the distance $t_\infty > t_2$ and we are either in first or fourth situation. In the first case, $t_1 < t_\infty$ and $t_1 < t'_\infty$ so $d_{Z'} = d_Z \le c$. In the fourth case, we have still $d_{Z'} - d_Z \le 2K(R)$ and hence $d_Z' \le c + 2K(R)$.
\end{proof}

Applying the Lemma both to $\Theta$ and $\Theta^{-1}$, we get the following Theorem.

\begin{theorem}\label{Theta}
Let $X, Y$ be two geodesic hyperbolic metric spaces with base points $x_0 \in X$, $y_0 \in Y$. Assume that there exists a constant $D$ such that for any $x \in X$ there exists a geodesic ray $\gamma$ from the base point $\gamma(0) = x_0$ and passing near $x$: $d(x, \gamma) < D$ (Hypothesis \ref{hypo1}). Let the restriction of $\Theta: \partial X  \to \partial Y$ be a homeomorphism between ideal boundaries.
Then the restriction of $\Theta$ on a ball $B(x_0,R)\subset X$ of radius $R$ is a $(\lambda, C_q)$-quasi-isometry to $B(y_0,R)\subset Y$, where $\lambda = 1+2\frac{K(R)}{c}$ and $C_q = 2K(R)+c$. The constant $c$ can be chosen as $c = D +\delta$ where $\delta$ is the hyperbolicity constant.
\end{theorem}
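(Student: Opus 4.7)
The argument is a direct two-sided application of the preceding Lemma. The Lemma already delivers one of the two quasi-isometry inequalities: combining its two cases gives, for every pair $P_1, P_2 \in B(x_0, R)$,
$$d_Y(\Theta(P_1), \Theta(P_2)) \le \lambda\, d_X(P_1, P_2) + C_q,$$
where when $d_X(P_1, P_2) > c$ the ratio bound $1 + 2K(R)/c = \lambda$ supplies the leading term, and when $d_X(P_1, P_2) \le c$ the absolute bound $2K(R) + c = C_q$ on the image distance suffices.

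For the reverse inequality I would construct, by the recipe at the start of the section, a radial extension $\Theta_{\leftarrow}: Y \to X$ of the inverse boundary homeomorphism $\theta^{-1}$ (this asks for the analogue of Hypothesis \ref{hypo1} for $Y$, or at least on $B(y_0, R)$). The crucial observation is that the quantity $K(R)$ is unchanged under the simultaneous swap $(X, x_0, \theta)\leftrightarrow(Y, y_0, \theta^{-1})$: the absolute value and the symmetric condition on the pair of visual distances in its definition make it formally symmetric. Consequently the preceding Lemma, applied this time to $\Theta_{\leftarrow}$ on $B(y_0, R)$, yields, with the very same constants $\lambda$ and $C_q$,
$$d_X(\Theta_{\leftarrow}(Q_1), \Theta_{\leftarrow}(Q_2)) \le \lambda\, d_Y(Q_1, Q_2) + C_q, \quad Q_1,Q_2\in B(y_0, R).$$

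Setting $Q_i := \Theta(P_i)$ and substituting, the remaining step is to verify that $\Theta_{\leftarrow}\circ\Theta$ is $O(D+\delta)$-close to the identity on $B(x_0, R)$. By construction $\Theta(P_i)$ lies on a ray $\gamma_i'$ from $y_0$ with endpoint $\theta(\gamma_i(\infty))$, where $\gamma_i$ is a ray from $x_0$ passing within $D$ of $P_i$. Choosing $\gamma_i'$ itself as the defining ray for $\Theta_{\leftarrow}(\Theta(P_i))$ returns the point $\gamma_i(d(x_0,P_i))$, which is at distance at most $2D$ from $P_i$ by the triangle inequality; any other admissible choice of ray differs by only $O(D+\delta)$ via $\delta$-thinness of triangles built on two rays sharing an endpoint. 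Substituting and absorbing this bounded error into $C_q$ (enlarging $c$ by a fixed multiple of $D+\delta$ if necessary) produces the missing inequality $d_X(P_1, P_2) \le \lambda\, d_Y(\Theta(P_1), \Theta(P_2)) + C_q$, and hence the full quasi-isometry from $B(x_0, R)$ to $B(y_0, R)$. The only nontrivial friction is this last coordination of ray choices between the constructions of $\Theta$ and $\Theta_{\leftarrow}$; everything else is bookkeeping around the two cases in the preceding Lemma.
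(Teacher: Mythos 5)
Your proposal matches the paper's proof, which is given in a single sentence ("Applying the Lemma both to $\Theta$ and $\Theta^{-1}$, we get the following Theorem"): you apply the radial-extension Lemma to $\Theta$ for the upper bound, then to the radial extension $\Theta_{\leftarrow}$ of $\theta^{-1}$ for the lower bound, using the formal symmetry of $K(R)$ under swapping $(X,\theta)\leftrightarrow(Y,\theta^{-1})$. Your explicit verification that $\Theta_{\leftarrow}\circ\Theta$ is $O(D+\delta)$-close to the identity (and your flag that Hypothesis \ref{hypo1} is implicitly needed on $Y$ as well, and that the additive constant may grow by a bounded multiple of $D+\delta$) supplies details the paper leaves implicit, but the route is the same.
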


\section{Examples}

\subsection{Bi-H\"older maps} 

Let $\theta$ be a bi-H\"older map:
\begin{eqnarray}\nonumber
d(\theta(\xi_1), \theta(\xi_2)) \le cd(\xi_1, \xi_2)^\alpha, \alpha < 1,
\\\nonumber
d(\theta(\xi_1), \theta(\xi_2)) \ge \frac{1}{c}d(\xi_1, \xi_2)^\beta, \beta > 1.
\end{eqnarray}

Assume first that for two points $\xi_1, \xi_2$ of the ideal boundary, the visual distance $d(\xi_1, \xi_2) > e^{-R}$. Then we have
$$\log \frac{d(\theta(\xi_1), \theta(\xi_2))}{d(\xi_1, \xi_2)} \le \log cd(\xi_1, \xi_2)^{\alpha - 1} = -(1 - \alpha)\log d(\xi_1, \xi_2) \lesssim (1 - \alpha) R.$$
Now, if the visual distance between images of $\xi_1$ and $\xi_2$ satisfy $d(\theta(\xi_1), \theta(\xi_2)) > e^{-R}$, we get
$$d(\xi_1, \xi_2) \ge \frac{1}{c^{1/\alpha}}e^{-R/\alpha}$$
and hence
$$\log \frac{d(\theta(\xi_1), \theta(\xi_2))}{d(\xi_1, \xi_2)} \gtrsim \frac{1 - \alpha}{\alpha} R.$$

We obtain the lower bound for $\log \frac{d(\theta(\xi_1), \theta(\xi_2))}{d(\xi_1, \xi_2)}$ just in the same way as the upper-bound. If $d(\xi_1, \xi_2) > e^{-R}$
$$\log \frac{d(\theta(\xi_1), \theta(\xi_2))}{d(\xi_1, \xi_2)} \ge \log \frac{1}{c}d(\xi_1, \xi_2)^{\beta - 1} = - (1 - \beta)\log d(\xi_1, \xi_2) \lesssim (1 - \beta) R.$$
If $d(\theta(\xi_1), \theta(\xi_2)) > e^{-R}$
$$\log \frac{d(\theta(\xi_1), \theta(\xi_2))}{d(\xi_1, \xi_2)} \ge \log \frac{1}{c}d(\theta(\xi_1), \theta(\xi_2))^{(\beta - 1)/\beta} = - \frac{1 - \beta}{\beta}\log d(\theta(\xi_1), \theta(\xi_2)) \gtrsim \frac{1 - \beta}{\beta} R.$$
This gives
\begin{eqnarray*}
K(R)\lesssim \max\{1-\alpha,1-\beta\}R.
\end{eqnarray*}

In particular, consider two variants of the space $T^n\times[0, +\infty)$ $Z$ and $Z'$ with metrics $dt^2+\sum e^{2\mu_it}dx_i^2$ and $dt^2+\sum e^{2\mu_i't}dx_i^2$ respectively. The visual distance between points $P_1$ and $P_2$ is given by
$$d_\infty(P_1, P_2) \sim \max |x_i^1 - x_i^2|^{1/\mu_i}.$$

Pick the identity map $\theta: \partial Z \to \partial Z'$. Then
$$\frac{d_\infty(\theta(P_1), \theta(P_2))}{d_\infty(P_1, P_2)} \sim \frac{\max_i |x_i^1 - x_i^2|^{1/\mu_i'}}{\max_i|x_i^1 - x_i^2|^{1/\mu_i}} \le \max_i |x_i^1 - x_i^2|^{1/\mu_i' - 1/\mu_i}.$$
Suppose that $d(P_1, P_2) > e^{-R}$. Then
\begin{eqnarray}\nonumber
\left|\log\frac{d_\infty(\theta(P_1), \theta(P_2))}{d_\infty(P_1, P_2)}\right|\le \left|\log \max_i|x_i^1 - x_i^2|^{1/\mu_i' - 1/\mu_i}\right| =
\\\nonumber
= \max_i\left(\mu_i\left|\frac{1}{\mu_i'} - \frac{1}{\mu_i}\right| \left|\log|x_i^1 - x_i^2|^{1/\mu_i}\right|\right) \le \max_i\left|\frac{\mu_i}{\mu_i'} - 1\right| R.
\end{eqnarray}
So, we conclude that $K(R) = \left|\max_i(\mu_i/\mu_i') - 1\right| R$.

\begin{remark}
More generally, such bi-H\"older maps exist between boundaries of arbitrary simply connected Riemannian manifolds with bounded negative sectional curvature. The H\"older exponent is controlled by sectional curvature bounds.
\end{remark}

\subsection{Unipotent locally homogeneous space}\label{unipotent}

Now assume the space $Z$ is a quotient $\R^2/\Z^2\times \R$ of the space $\R^2\times \R$ with the metric $dt^2 + e^{2t}(dx^2+dy^2)$. Consider the space $Z'=\R^2/\Z^2\ltimes_{\alpha} \R$, quotient of the space $\R^2\rtimes_{\alpha} \R$, where $\alpha$ is the $2\times 2$ matrix
$$\left(\begin{array}{cc}1 & 1 \\ 0 & 1\end{array}\right).$$
The locally homogeneous metric is of the form $dt^2 + g_t$ where $g_t = (e^{t\alpha})^\ast g_0$
$$e^{t\alpha}\left(\begin{array}{c}x \\ y\end{array}\right) = \left(\begin{array}{cc} e^t & te^t \\ 0 & e^t\end{array}\right)\left(\begin{array}{c}x \\ y\end{array}\right) = \left(\begin{array}{c}e^tx + te^ty \\ e^ty\end{array}\right)$$
and so $g_t = d(e^tx+te^ty)^2 + d(e^ty)^2 = e^{2t}(dx^2 + 2tdxdy + (t^2+1)dy^2).$

Let $\theta: \partial Z \to \partial Z'$ be the identity. Consider two points $P_1 = (x_1, y_1)$ and $P_2 = (x_2, y_2)$ in $Z$. We will write $x = x_1 - x_2$ and $y = y_1 - y_2$. For the visual distance between $P_1, P_2$ we have
$$d_\infty(P_1, P_2) = \max\{|x|, |y|\}.$$
For their images $\theta(P_1)$ and $\theta(P_2)$ (see section $5$ of \cite{SXie} and \cite{Xie})
$$d_\infty(\theta(P_1), \theta(P_2)) = \max\{|y|, |x - y\log |y|\}.$$
First we will give an upper-bound for $\log(d_\infty(\theta(P_1), \theta(P_2))/d_\infty(P_1, P_2))$. We have four different cases.

\medskip

\emph{1st case.} If $|x| < |y|$ and $|x - y\log |y|| < |y|$,
$$\frac{d_\infty(\theta(P_1), \theta(P_2))}{d_\infty(P_1, P_2)} = 1.$$

\emph{2nd case.} If $|x - y\log |y|| < |y| < |x|$,
$$\frac{d_\infty(\theta(P_1), \theta(P_2))}{d_\infty(P_1, P_2)} < 1.$$

\emph{3d case.} If $|x| < |y| < |x - y\log |y||$.
$$\frac{d_\infty(\theta(P_1), \theta(P_2))}{d_\infty(P_1, P_2)} = \frac{|x - y\log y|}{|y|} \le \frac{|x|}{|y|} + |\log |y||.$$
If $d_\infty(P_1, P_2) > e^{-R}$ we have $e^{-R} < |y| \le 1$ (the upper bound follows from the fact that $y$ is a coordinate of a point of a torus) and hence $|\log |y|| \le R$ and we finish as follows,
$$\frac{d_\infty(\theta(P_1), \theta(P_2))}{d_\infty(P_1, P_2)} \le \frac{|x|}{|y|} + |\log |y|| \le 1 + R.$$
If $d_\infty(\theta(P_1), \theta(P_2)) > e^{-R}$ we will consider two situations.
\begin{itemize}
\item If $|x| > |y\log |y||$ then $|x - y\log y| < 2|x|$ and as $|x| < |y|$,
$$\frac{d_\infty(\theta(P_1), \theta(P_2))}{d_\infty(P_1, P_2)} \le 2.$$
\item If $|x| < |y\log |y||$ then $e^{-R} < |x - y\log |y|| < 2|y\log |y||$ and hence $|\log |y|| < R$, so
$$\frac{d_\infty(\theta(P_1), \theta(P_2))}{d_\infty(P_1, P_2)} \le 1 + R.$$
\end{itemize}

\emph{4th case.} Let now $|y| < |x|$ and $|y| < |x - y\log |y||$
$$\frac{d_\infty(\theta(P_1), \theta(P_2))}{d_\infty(P_1, P_2)} = \frac{|x - y\log |y||}{|x|} \le 1 + \frac{|y\log |y||}{|x|}.$$
We will check two possibilities.
\begin{itemize}
\item If $|y| \le |x|^{2}$ then
$$\frac{|y\log |y||}{|x|} = \frac{|y|^{1/2}}{|x|}\left||y|^{1/2}\log |y|\right| \le 1.$$
\item Now suppose that $|y| \ge |x|^{2}$. If $d_\infty(P_1, P_2) > e^{-R}$, we see easily that $|y| \ge e^{-2R}$ and hence
$$\frac{|y\log |y||}{|x|} \le \frac{|x \log|y||}{|x|} \le |\log |y|| \le 2R.$$ 
\end{itemize}
If $d_\infty(\theta(P_1), \theta(P_2)) > e^{-R}$ we use the fact that $|a + b| \ge 2\max\{|a|, |b|\}$. Hence, either $|x| > e^{-R}/2$ or $|y\log|y|| > e^{-R}/2$ and so $|y| \gtrsim e^{-R}$ and we finish the estimation as earlier.

So in the fourth case we have also
$$\frac{d_\infty(\theta(P_1), \theta(P_2))}{d_\infty(P_1, P_2)} \le 2R.$$

\medskip

Here, we have proved that $\log(d_\infty(\theta(P_1), \theta(P_2))/d_\infty(P_1, P_2)) \le \log R$. Now we proceed to give also a lower bound for this expression.

\emph{1st case.} If $|x| < |y|$ and $|x - y\log |y|| < |y|$,
$$\frac{d_\infty(\theta(P_1), \theta(P_2))}{d_\infty(P_1, P_2)} = 1.$$

\emph{2nd case.} If $|x - y\log |y|| < |y| < |x|$,
$$\frac{d_\infty(\theta(P_1), \theta(P_2))}{d_\infty(P_1, P_2)} = \frac{|y|}{|x|}.$$
Without loss of generality, assume $x > 0$. By the construction of $Z$, $|y| < 1$ hence $\log |y| < 0$. If $0 < x \le y\log|y|$, we have $y < 0$. Now transform $x \le y \log |y|$ as $1 \le -\log|y| (-y)/x$, hence
$$-\frac{y}{x} \ge -\frac{1}{\log |y|}.$$
Now either $d_\infty(\theta(P_1), \theta(P_2)) = |y| > e^{-R}$ or $e^{-R} \le d_\infty(P_1, P_2) = |x| \le y\log|y|$ which also means that $|y| \gtrsim e^{-R}$. So,
$$\frac{|y|}{|x|} \ge \frac{1}{R}.$$

If on the contrary $y\log|y| \le x$ we have
\begin{equation}\label{case2}
x - y\log|y| < |y| < x.
\end{equation}
First we notice that $y\log|y| > x - |y| > 0$. As $|y| < 1$ for any point of our space, $\log |y| < 0$ and we conclude that $y < 0$. Now from (\ref{case2}) we obtain that $x < -y(1 - \log|y|)$. As $1 - \log|y| > 0$ we obtain
$$-\frac{y}{x} > \frac{1}{1 - \log|y|}.$$
If $d_\infty(\theta(P_1), \theta(P_2)) = |y| > e^{-R}$, we trivially get that
$$\frac{|y|}{|x|} > \frac{1}{R}.$$
If $e^{-R} \le d_\infty(P_1, P_2) = |x|$ we write $e^{-R} < x < -y(1 - \log|y|)$ and hence $y \gtrsim e^{-R}$, so we obtain the same result. So, in both cases we come to the same result
$$\left|\log\frac{|y|}{|x|}\right| < R.$$

\emph{3d case.} Assume $|x| < |y| < |x - y\log |y||$, this case is trivial as
$$\frac{d_\infty(\theta(P_1), \theta(P_2))}{d_\infty(P_1, P_2)} = \frac{|x - y\log y|}{|y|} \ge 1.$$

\emph{4th case.} Let now $|y| < |x|$ and $|y| < |x - y\log |y||$. We also suppose that $x > 0$ to save notation.
\begin{equation}\label{case4}
\frac{d_\infty(\theta(P_1), \theta(P_2))}{d_\infty(P_1, P_2)} = \frac{|x - y\log |y||}{|x|} = \left|1 - \frac{y\log|y|}{x}\right|.
\end{equation}
If (\ref{case4}) is greater than $1/2$ then we have nothing to prove. So suppose that (\ref{case4}) is less than $1/2$
$$-\frac{x}{2} \le x - y\log|y| \le \frac{x}{2},$$
and so
$$\frac{x}{2} \le y\log|y| \le \frac{3x}{2}.$$
The last inequality shows that if either $d_\infty(\theta(P_1), \theta(P_2)) \ge e^{-R}$ or $d_\infty(P_1, P_2) \ge e^{-R}$, $|y| \gtrsim e^{-R}$ and so we have 
$$\frac{|y\log|y||}{x} \ge \frac{|y\log|y||}{y} = |\log|y|| \ge \frac{1}{R},$$
which completes our discussion of this example. We have proved that
$$K(R) \lesssim \log R.$$

\section{Appendix: Quasi-isometric embeddings and fundamental groups}

Here we would like to discuss the hypothesis of the Theorem \ref{lowerBound} that the quasi-isometric embedding under consideration is a homotopy equivalence. We will show that if $\mathrm{dim}(Z)\geq 3$, one may believe that the assumption that $\Theta$ be isomorphic on fundamental groups is not that restrictive. Indeed, in Proposition \ref{pi1}, we shall show that this is automatic, but unfortunately the argument introduces an ineffective constant $R_0$, which makes it useless. For instance, if it turns out that $R_0=\lambda_1^2$, Proposition \ref{pi1} does not help to remove the homotopy assumption in Theorem \ref{lowerBound}. Nevertheless, it is included for completeness sake.

\begin{proposition}\label{pi1}
Let $Z, Z'$ be two spaces of the described form with equal dimensions $n+1 \ge 3$. Then for any $\lambda_1\ge1,\lambda_2\ge1,c_1\ge0,c_2\ge0$ there exists $R_0=R_0(\lambda_1,\lambda_2,c_1,c_2)$ such that if $R > R_0$ and a continuous map $f:B_{Z_\mu}(O,R_0)\to Z_{\mu'}$ is a $(\lambda_1,\lambda_2,c_1,c_2)$-quasi-isometric embedding, then $f$ induces an isomorphism on fundamental groups $\pi_1(Z_\mu)\to\pi_1(Z_{\mu'})$.
\end{proposition}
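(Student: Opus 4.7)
The approach is a contradiction plus compactness argument, reducing to the quasi-isometric classification of the locally homogeneous negatively curved spaces $Z_\mu$. Since $Z_\mu$ deformation retracts to its torus factor, $\pi_1(Z_\mu) \cong \pi_1(Z_{\mu'}) \cong \Z^n$, and $f_*$ is a homomorphism between free abelian groups of equal rank $n \geq 2$. To prove $f_*$ is an isomorphism amounts to showing $\det f_* = \pm 1$. Suppose, for contradiction, that no $R_0$ works: there exist a sequence $R_k \to \infty$ and continuous $(\lambda_1,\lambda_2,c_1,c_2)$-quasi-isometric embeddings $f_k : B_{Z_\mu}(O,R_k) \to Z_{\mu'}$ whose induced maps $(f_k)_*$ on $\pi_1$ are not isomorphisms. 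After post-composing each $f_k$ with a deck transformation of $Z_{\mu'}$ (a translation along the $\T^n$-factor), assume $f_k(O)$ lies in a fixed compact fundamental domain; this changes neither the QI constants nor the non-isomorphism property.

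To extract a subsequential limit, I would first replace each $f_k$ by a measurable approximation within distance $2c_1$ (as in the proposition on measurable quasi-isometric embeddings earlier in the paper), then smooth by convolution against a fixed kernel $\psi'$ as in the regularization section. This yields maps that are genuinely Lipschitz on compact sets, with comparable QI constants. A diagonal Arzel\`a--Ascoli argument then produces a subsequence (still denoted $f_k$) converging uniformly on compacta to a continuous quasi-isometric embedding $f_\infty : Z_\mu \to Z_{\mu'}$, with constants of the same order.

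Next, I would invoke the quasi-isometric classification for Heintze-type locally homogeneous spaces of this family, due to Hamenst\"adt \cite{Hamen} and Xie \cite{Xie}, \cite{SXie}, valid precisely when $n \geq 2$ (this is where the dimension hypothesis enters; for $n=1$ the space is essentially $\HH^2$ and admits QI self-embeddings induced by finite coverings, which realize non-surjective $\pi_1$-endomorphisms): any quasi-isometric embedding $Z_\mu \to Z_{\mu'}$ is, up to bounded error, induced by a bi-H\"older homeomorphism of the ideal boundaries respecting the horospherical foliation, and is in particular coarsely surjective. The lift $\tilde f_\infty : \tilde Z_\mu \to \tilde Z_{\mu'}$ therefore intertwines the two $\Z^n$ deck-transformation actions up to bounded error, which forces $(f_\infty)_* : \Z^n \to \Z^n$ to be an automorphism.

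Finally, transfer this back to $f_k$: the isomorphism $(f_\infty)_*$ is witnessed by the free homotopy classes of the loops $f_\infty(\gamma_1),\dots,f_\infty(\gamma_n)$, where $\gamma_1,\dots,\gamma_n$ are unit-length generating loops at $O$, all contained in the compact set $K = B_{Z_\mu}(O,1)$. Since $f_k \to f_\infty$ uniformly on $K$, for $k$ sufficiently large each $f_k(\gamma_i)$ is freely homotopic in $Z_{\mu'}$ to $f_\infty(\gamma_i)$, so $(f_k)_* = (f_\infty)_*$ for such $k$, contradicting the assumption. The main obstacle---and the reason $R_0$ remains ineffective---is precisely the Arzel\`a--Ascoli step: it provides no quantitative rate of convergence in $k$, so one cannot extract $R_0$ as an explicit function of $(\lambda_1,\lambda_2,c_1,c_2)$.
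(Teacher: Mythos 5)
The approach is in the same family as the paper's (contradiction plus Arzel\`a--Ascoli), but there is a genuine gap in the normalization step, and the paper's dichotomy is exactly designed to handle what your normalization does not cover.

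You claim you can post-compose $f_k$ with a deck transformation of $Z_{\mu'}$ so that $f_k(O)$ lies in a fixed compact fundamental domain. But $Z_{\mu'}=\T^n\times\R$, and its deck transformations (or more generally its isometries coming from the $\T^n$-factor) act only in the compact torus direction, never on the $t$-coordinate. They cannot move $f_k(O)$ back if $\rho_k := d(O', f_k(O)) \to \infty$, because such drift is necessarily in the $t$-direction. So your compactness setup fails precisely in this case, and Arzel\`a--Ascoli has nothing to converge to. This is not a minor omission: a priori nothing constrains $\rho_k$, since boundedness of $d(O',f(O))$ is exactly what one deduces from the $\pi_1$-hypothesis in the proof of Theorem \ref{lowerBound} --- a hypothesis you are trying to establish. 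The paper handles this by a case split on whether $\rho_k\to\infty$: in the unbounded case one observes that small balls around $O$ have images in a far-out region where loops are short relative to injectivity radius, hence null-homotopic, so $f_k$ lifts to the \emph{universal} cover $X_{\mu'}$, which \emph{is} homogeneous and where one can recenter by an isometry before applying Ascoli; the limiting boundary map then gives a continuous injection $\T^n\to S^n$, contradicting invariance of domain for $n\ge 2$.

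Secondly, in the bounded case you invoke the Hamenst\"adt--Xie quasi-isometric classification to conclude that $(f_\infty)_*$ is an automorphism. This is a much heavier tool than the paper uses (and the deduction ``intertwines deck actions up to bounded error, hence automorphism'' is stated rather than argued). The paper's route is lighter and self-contained: the limiting QI embedding $f$ extends to a continuous injective boundary map $\partial f:\partial Z\to\partial Z'$ between manifolds of the same dimension, which by invariance of domain is open, hence a homeomorphism $\T^n\to\T^n$, hence a $\pi_1$-isomorphism; $f_{R_j}$ at bounded distance from $f$ then induces the same map on $\pi_1$. Your final transfer step (free homotopy of $f_k(\gamma_i)$ for large $k$) is fine and parallels the paper's. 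Your remark about the ineffectiveness of $R_0$ stemming from Ascoli matches the paper's remark. But as written, the unbounded-drift case is an unclosed hole, and the normalization you propose cannot close it.
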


\begin{proof}
We provide a proof by contradiction. Assume that for arbitrarily large values of $R$, there exists a map $f_R: B_{Z}(R)\to Z'$ which is a $(\lambda_1,\lambda_2,c_1,c_2)$-quasi-isometric embedding which is not isomorphic on fundamental groups. Pick 
 a $2c_1/\lambda_1$-dense and $c_1/\lambda_1$-discrete subset $\Lambda$ of $Z$. Notice that if $f_R$ is a $(\lambda_1,\lambda_2,c_1,c_2)$-quasi-isometry, then $f_R$ is bi-Lipschitz on $B_Z(R)\cap\Lambda$. Conversely, if a map defined on $B(R)\cap\Lambda$ is bi-Lipschitz, then it can be continuously extended on $B(R)$ as a quasi-isometric embedding. Indeed, away from a ball, $Z'$ is contractible up to scale $c_1$.

Set $\rho = d(O', f_R(O))$. First, consider the case when $\rho \to \infty$. Set $\sigma = (\rho/4-c_1)/\lambda_1$. Then $f_R(B(O,\sigma))$ is contained in a ball $B(f_R(O),\rho/4)$ which lies in the complement of $B(O',\rho/2)$
$$f_R(B(O,\sigma)) \subset B(f_R(O),\rho/4) \subset B(O',\rho/2)^c.$$
The diameter of the image of any loop in $B(O,\sigma)$ is at most $\lambda_1\sigma+c_1$. Because $\lambda_1\sigma+c_1 < \rho/4$, these loops are homotopic to $0$ (diameters of loops are too short relatively to $B(O',\rho/2)^c$). Hence, the restriction of $f_R$ on $B(0,\sigma)$ is homotopic to $0$. Hence $f_R$ lifts to $\tilde f_R: B_{Z}(\sigma) \to \tilde Z'=X_{\mu'}$ which is homogeneous. Now up to composing $\tilde f_R$ with an isometry we can suppose that it preserves the center $\tilde f_R(O) = O'$. By Ascoli's theorem, we can find a sequence $\tilde f_{R_j}|_\Lambda$ which uniformly converges to $\tilde f|\Lambda: Z\cap\Lambda\to \tilde Z'$ which is also bi-Lipschitz. We continuously extend $\tilde f_{|\Lambda}$ to $\tilde f: Z\to \tilde Z'$, $\tilde f$ is a quasi-isometric embedding. Its extension to ideal boundaries is continuous and injective. By the theorem of invariance of domain, $\partial\tilde f: T^{n} \simeq \partial X_\mu = S^n$ is open, and thus a homeomorphism. This provides a contradiction if $n\ge 2$.

If $\rho=d(O',f_R(O))$ stays bounded, we can directly use Ascoli's theorem, and get a limiting continuous quasi-isometric embedding $f$. Again, $f$ extends to the ideal boundary, $\partial f:\partial Z\to \partial Z'$, the map $\partial f$ is continuous and injective. Because $\partial Z$ and $\partial Z'$ have the same dimension, $\partial f$ is an open map by the theorem of invariance of domain and $\partial f$ is a homeomorphism. Hence, $\partial f$ induces an isomorphism on fundamental groups. If $R_j$ is sufficiently large, then $f_{R_j}$ is at bounded distance from $f$ and hence $f_{R_j}$ also induces an isomorphism $\pi_1(B_{Z}(R))\to\pi_1(Z')$. This contradiction completes the proof.
\end{proof}

\begin{remark}
The proof does not provide an effective value of $R_0$.
\end{remark}

\section{Ackowledgement}

The author thanks Pierre Pansu for his invaluable help through all steps of this work on the paper. This work is supported by Agence Nationale de la Recherche, grant ANR-10-BLAN 0116. This work is also partially supported by RFBR, grant 14-07-00812.

\end{document}